\newenvironment{Proof}{\textbf{Proof.}}{$\qquad \blacksquare$\par}
\newenvironment{Proof of}[1]{\textbf{Proof #1.}}{$\qquad \blacksquare$\par}
\newenvironment{Item}[1]{\par #1 }{\par}
\DeclareMathOperator{\Aut}{Aut}
\DeclareMathOperator{\clsp}{\overline{span}}
\newcommand{\LL}{\mathcal L}
\newcommand{\TT}{\mathcal T}
\newcommand{\JJ}{\mathcal J}
\newcommand{\KK}{\mathcal K}
\newcommand{\OO}{\mathcal{O}}
\renewcommand{\L}{\mathcal L}
\newcommand{\FF}{\mathcal F}
\newcommand{\A}{ A} 
\newcommand{\B}{B}
\newcommand{\Z}{\mathbb Z}
\newcommand{\N}{\mathbb N}
\newcommand{\T}{\mathbb T}
\renewcommand{\H}{\mathcal H}
\newcommand{\I}{\mathcal I}
\newcommand{\M}{\mathcal M}
\newcommand{\NN}{\mathcal N}
\newcommand{\X}{\widetilde X}
\newcommand{\tdelta}{\widetilde \alpha}
\newcommand{\al}{\alpha}
\renewcommand{\L}{\mathcal L}
\newtheorem{thm}{Theorem}[section]
\newtheorem{lem}[thm]{Lemma}
\newtheorem{prop}[thm]{Proposition}
\newtheorem{cor}[thm]{Corollary}
\theoremstyle{definition}
\newtheorem{defn}[thm]{Definition}
\newtheorem{ex}[thm]{Example}
\newtheorem{rem}[thm]{Remark}
\begin{document}

 \thispagestyle{empty}

 \begin{center}
{\bfseries \large \textsc{Crossed products by endomorphisms  and  reduction of relations in relative Cuntz-Pimsner algebras}}

\bigskip
B. K.  Kwa\'sniewski\footnote{Work partially supported by National Science Centre  grants numbers DEC-2011/01/D/ST1/04112 and DEC-2011/01/B/ST1/03838} , \ \  A. V. Lebedev

\end{center}

\begin{abstract} Starting
from an arbitrary endomorphism $\alpha$ of a unital $C^*$-algebra
$\A$ we construct a crossed product. It is shown that the natural
construction depends not only on the $C^*$-dynamical system
$(\A,\alpha )$ but also on the choice of an ideal $J$ orthogonal
to $\ker \alpha $. The article gives an  explicit description of
the internal structure of this crossed product and, in particular,
discusses the interrelation between relative Cuntz-Pimsner
algebras and  partial isometric crossed products. We present a
canonical procedure that reduces any given $C^*$-correspondence
 to the  'smallest' $C^*$-correspondence  yielding the
same relative Cuntz-Pimsner algebra as the initial one. In the
context of crossed products this reduction procedure corresponds
to the reduction of $C^*$-dynamical systems and allow us to
establish  a coincidence between relative Cuntz-Pimsner
algebras and crossed products introduced.
\end{abstract}
\medbreak

 \textbf{Keywords:} \emph{$C^*$-algebra, endomorphism, partial
isometry, orthogonal ideal, crossed product, covariant representation, $C^*$-correspondence, relative Cuntz-Pimsner algebra, reduction}

\medbreak
{\bfseries 2000 Mathematics Subject Classification:} 47L65, 46L05,
47L30
\vspace{5mm}

\tableofcontents

\section*{Introduction}

The crossed product of a $C^*$-algebra $A$ by an automorphism
$\alpha:A\to A$ is defined as a universal $C^*$-algebra
generated by a copy of $A$ and a unitary element $U$ satisfying
the relations
$$
\alpha(a)= Ua U^*,\qquad \alpha^{-1}(a)=U^*aU, \ \ \ \ a\in A.
$$
On  one hand, algebras arising in this way (or their versions adapted to actions
of groups of automorphisms) are very well understood and  became
a part of a $C^*$-folklore \cite{Pedersen}, \cite{Kadison}. On the other hand,
it is  very symptomatic  that, even though  the first attempts on  generalizing
this kind  of constructions to endomorphisms  go  back to 1970s,  articles introducing
different definitions of the related object appear almost  continuously until the present-day, see,
for example, \cite{CK}, \cite{Paschke},
\cite{Stacey}, \cite{Murphy},  \cite{exel1}, \cite{exel2},
\cite{kwa}, \cite{Ant-Bakht-Leb}.
This phenomenon is caused by   very fundamental problems one has to face when dealing with crossed
products by endomorphisms. Namely, one has to answer the following questions:

\begin{itemize}
    \item[(i)] What relations should the element  $U$  satisfy?
        \item[(ii)] What should be used in place of $\alpha^{-1}$?
\end{itemize}

It is important that in spite of  substantial freedom of
choice (in answering the foregoing questions), all the above
listed papers  do however have a certain nontrivial intersection.
They  mostly agree, and simultaneously boast their greatest
successes, in the  case when dynamics is implemented by
monomorphisms with  a hereditary range. In view of the  articles
\cite{Bakht-Leb}, \cite{Ant-Bakht-Leb}, \cite{kwa3}, this
coincidence is completely understood. It is shown in
\cite{Bakht-Leb} that in the case of monomorphism with hereditary
range there exists a unique non-degenerate transfer operator
$\alpha_*$ for $(A,\alpha)$, called by authors of
\cite{Bakht-Leb} a \emph{complete transfer operator}, and the
theory goes smooth with  $\alpha_*$ as it takes over the role
classically played by $\alpha^{-1}$. The $C^*$-dynamical systems
of this sort will be  called {\em partially reversible}.
\par
If a pair $(A,\alpha)$ is of the above described type,
then $A$ is called a {\em coefficient algebra}.
This notion was introduced in \cite{Leb-Odz} where its investigation and
relation to the extensions of $C^*$-algebras by partial isometries was clarified.
Further in
 \cite{Bakht-Leb} a certain criterion for a
$C^*$-algebra to be a coefficient algebra associated with a given
endomorphism was obtained (see also  \cite{kwa3}). On the base of the results of these papers
one naturally arrives at the construction of a certain crossed
product which was implemented in \cite{Ant-Bakht-Leb}.
It was also observed in \cite{Ant-Bakht-Leb} that in the most
natural situations the coefficient algebras arise as a result of
a certain extension procedure on the initial $C^*$-algebra.
Since the crossed product is (should be) an extension of the
initial $C^*$-algebra one can consider the construction of an
appropriate coefficient algebra as one of the most important
intermediate steps in the procedure of construction of the crossed product itself (a detailed discussion of the philosophy of the arising construction is given in \cite[Section~5]{Ant-Bakht-Leb}).

It was also shown in \cite[Section~4]{Ant-Bakht-Leb} how  different extension procedures lead to the most popular constructions of crossed products such as Cuntz-Krieger algebras~\cite{CK}, Paschke's crossed product~\cite{Paschke}, partial crossed product~\cite{exel1}, Exel's crossed product~\cite{exel2} and others.

The analysis of  these extension procedures   naturally  leads to  the next  problem: can we extend a
 $C^*$-dynamical system associated with an arbitrary endomorphism
to a {\em partially reversible}  $C^*$-dynamical system? In the
{\em commutative} $C^*$-algebra situation the corresponding
procedure and the explicit description of maximal ideals of the
arising $C^*$-algebra is given in \cite{maxid}. On the base of
this construction the general construction of the crossed product
associated to an arbitrary endomorphism of a {\em commutative}
$C^*$-algebra is presented in \cite{kwa}. Further the general
construction of an extension of a $C^*$-dynamical system
associated with an arbitrary endomorphism to a  partially
reversible $C^*$-dynamical system   is worked out in
 \cite{kwa4}. Therefore the mentioned results of
\cite{Bakht-Leb}, \cite{Ant-Bakht-Leb},  \cite{kwa4}, give us the key to
construct a general crossed product starting from a
$C^*$-dynamical system associated with an arbitrary endomorphism,
and this is one of the main  themes of the present article.
\par
The most important novelties we incorporate to the theory of crossed products are
\begin{itemize}
    \item[1)] an explicit description of the crossed product based on the
worked out matrix calculus presented in Section~\ref{22},
\end{itemize}
and an observation of (in a way unexpected) phenomena  that
\begin{itemize}
\item[2)] in general the universal construction of the crossed product
depends not only on the algebra $A$ and an endomorphism $\alpha$
one starts with but also on the choice of an ({\em arbitrary})
singled out ideal $J$ orthogonal to the kernel of $\alpha$ (see
Section \ref{Crossed products-alternative1}).
\end{itemize}
So in fact we have a {\em variety} of crossed products depending
on $J$.
\smallskip

On the appearance of  \cite{kwa-leb} B. Solel noted to the authors that the crossed product constructed
in \cite{kwa-leb} can  also be modeled as a certain relative Cuntz-Pimsner algebra (Proposition \ref{universality proposition}
of the present article describes in essence the main idea of B. Solel's remark). Thus we have naturally arrived at the discussion of  interrelations between   relative Cuntz-Pimsner algebras and  crossed products, and this was the theme of~\cite{kwa-leb1}. Since relative Cuntz-Pimsner algebras are defined by means of $C^*$-correspondences the role of the latter objects in the whole picture should  be clarified and in this way we necessarily come to the analysis of the interplay: crossed products -- relative Cuntz-Pimsner algebras -- $C^*$-correspondences.
Corollary \ref{C-P-cross} of the present article states that if $X$ is a $C^*$-correspondence of a $C^*$-dynamical system $(A,\alpha)$ and $J$ is an ideal orthogonal to the kernel of $\alpha$ then  the relative Cuntz-Pimsner algebra
$\OO(J,X)$  and the crossed product $C^*(A,\alpha,J)$ of the present article are canonically isomorphic. This observation in its turn  causes a problem. Namely,  $\OO(J,X)$ is defined for ideals $J$ that are not
necessarily orthogonal to the kernel of $\alpha$. Moreover, by means of $\OO(J,X)$ one can construct crossed products seemingly different from those introduced in the present article (see, for example, Stacey's crossed product identified in Corollary \ref{corollary stacey's}). Therefore, one may   guess  that
$\OO(J,X)$ is a more general object than  $C^*(A,\alpha,J)$. At
the same time it is known (see \cite[Proposition~2.21]{ms}, i.e. Proposition~\ref{injectivity of k_A} of the present article)    that when
$J$ is not  orthogonal to the kernel of $\alpha$  the algebra  $\OO(J,X)$
possesses certain 'degeneracy'. All this stimulates us to take a
closer look and provide a more thorough analysis of the structure
of $\OO(J,X)$ and its relation to $C^*(A,\alpha,J)$, and this is one more main goal of the article.

As we show the necessary apparatus of investigation of the noted  vagueness in the relation between $\OO(J,X)$ and $C^*(A,\alpha,J)$   is \emph{reduction}.

The general scheme of reduction procedure (taking quotients) associated with ideals  in $C^*$-correspondences and the corresponding reduction in relative Cuntz-Pimsner algebras as well as the analysis of this scheme was provided in the structure theorem of \cite{fmr} (see Theorem \ref{takie tam aa} of the present article). We add  \emph{canonicity} to this scheme by applying the reduction procedure to a sequence of ideals $J_n, \ n=1,2, ..., \, J_\infty$  (Definition \ref{reduction ideal for correspondences}) that are naturally generated by $J$ and $\OO(J,X)$. Being defined in this way the  canonical
procedure  reduces any given  $C^*$-correspondence to  the  'smallest' $C^*$-correspondence yielding the  same relative Cuntz-Pimsner algebra as the initial one. In the context of the crossed products this reduction procedure corresponds to the reduction of $C^*$-dynamical systems.
 Using this, on the one hand,  we  obtain the \emph{canonical $C^*$-dynamical system} (in Section \ref{Canonical C*-dynamical systems}) and, on the other hand,  eliminate the mentioned 'degeneracy' in $\OO(J,X)$ and simultaneously establish an isomorphism between $\OO(J,X)$ and appropriate crossed product introduced in the present article (Theorem~\ref{reduction thm} and Proposition~\ref{reducing C*-Hilbert bimodules})  obviating in this way the mentioned vagueness in their interrelations.  As a byproduct we also get  a refinement of Stacey's results  (Example \ref{reduction of Stacey's crossed product}) and add clarity to Katsura's canonical relations (Subsection \ref{Katsura's canonical relations}).

 We would also like to make a certain additional remark on the objects of the paper. It is generally agreed that a crossed product of a \emph{unital $C^*$-algebra} $A$ by a $C^*$-mapping should be a $C^*$-algebra $B$ generated by a copy (or at least a homomorphic image) of the $C^*$-algebra $A$ and an operator $U$ implementing the dynamics. On the other hand when $A$ is \emph{non-unital}, then it seems that there are various essentially different ideas of what the \emph{non-unital crossed product} $B$ should be. In particular, one can add to questions (i), (ii) (related to irreversibility of dynamics)  two more still open principle problems (related to the lack of unity):
\begin{itemize}
    \item[(iii)] What should $B$ be generated by? Should it be the set $A\cup AU$, cf. e.g. \cite{Pedersen}, \cite{Kadison}, \cite{Stacey}, \cite{brv}, or maybe $A\cup A_0U$ for a certain subspace $A_0$ of $A$, cf. \cite{exel1},  \cite{fmr}, if so what should $A_0$ be?
        \item[(iv)] How should $U$ be related to $B$? Should it belong to the multiplier algebra $M(B)$ of $B$, cf. \cite{Pedersen}, \cite{Kadison}, \cite{Stacey},  an enveloping $W^*$-algebra $B^{**}$ of $B$, see e.g.  \cite{exel1}, \cite{brv}, or maybe something else?
\end{itemize}
A way to bypass this trouble, usually  adopted by most of the authors, cf. e.g.  \cite{Lin-Rae}, \cite{fmr}, \cite{brv},  is to consider the non-unital crossed products only for the so-called \emph{extendible systems}, that is for systems which naturally extend from $A$ to the multiplier algebra $M(A)$. For  such systems a non-unital crossed product is actually a subalgebra of a unital crossed product.  In  the present paper we drop the technicalities arising from consideration of  extendible systems and  non-trivial issues concerning non-extendible systems. \emph{We consider only unital crossed products}. Nevertheless the general $C^*$-correspondences will be considered over arbitrary (not necessarily unital) $C^*$-algebras.

The present paper is based on \cite{kwa-leb} and ~\cite{kwa-leb1},
and in essence forms their unification, refinement and development.

The paper is organized as follows.

In the first section we discuss and clarify the relations that should be used in a definition of a covariant representation. In particular, we split the class of covariant representations
 into subclasses according to a certain ideal they determine arriving at the notion of a $J$-covariant representation.
In Subsection \ref{what da hell} we establish  existence of such representations and introduce the corresponding crossed products as universal algebras. Section~\ref{22} presents a matrix calculus  which  describes the internal algebraic structure
 of the crossed product serving simultaneously as its  certain regular representation. This leads us in Subsection \ref{norm} to an explicit formula for the norm of  elements of the crossed product introduced, thus providing  us with one more its alternative definition (Definition~\ref{cr-pr-def}).
In Section \ref{isomorph} we give a series of isomorphism theorems. They  provide an apparatus for verifying faithfulness of a given representation of crossed product and, in particular,  establishing equivalence of different approaches to construction of
crossed products (cf. Proposition~\ref{crossed equiv}). The isomorphism theorems are discussed as on the operator algebraic level so also on the dynamical topological level exploiting topological freeness
of the arising $C^*$-dynamical systems (Theorems \ref{isomorphiasm theorem} and \ref{topolo disco-polo}). In addition we present here an overview of existing crossed product constructions and their comparison with the crossed product  of the present paper.  This stimulates us to undertake deeper analysis of interrelation between different approaches  and in this way we pass to the next main theme of the article.
In Section~\ref{C-P}, Subsections \ref{preliminaries on C-correspondences}, \ref{The Toeplitz C*-algebra of a Hilbert bimodule},~\ref{Relative Cuntz-Pimsner algebras}, we   recall the indispensable notions and objects  concerning $C^*$-correspondences and  Cuntz-Pimsner algebras, while
 presentation of the crossed products as  relative Cuntz-Pimsner algebras is given in
Subsection~\ref{Crossed prod =Relative Cuntz-Pimsner algebras}. Analysis of interrelations:
$C^*$-correspondences -- relative
Cuntz-Pimsner
algebras -- crossed products is implemented on the base of reduction.  The principal results in this direction  are given in Section~\ref{Reduction and canonical}. Applying the canonical reduction to $C^*$-correspondences we
eliminate the 'degeneracy' in relative Cuntz-Pimsner algebras   (Theorem \ref{reduction thm}). Applying this  procedure to  $C^*$-dynamical systems we prove coincidence between the corresponding  relative Cuntz-Pimsner algebras  and the crossed products of the present article (Proposition~\ref{reducing C*-Hilbert bimodules}).  Finally in  Section~\ref{Canonical C*-dynamical systems} starting from a triple $(A,\al,J)$ we construct a $C^*$-dynamical system $(A_J,\al_J)$ which is cannonical in the sense that the corresponding relations defining  the crossed product are  'non-degenerate' and do not include any ideal of $A$ (Theorem~\ref{canon}). This canonical construction is related to but as we argue differs from the similar result due to Katsura.

\section{Covariant representations, orthogonal ideals, crossed product}\label{Crossed products-alternative1}

\subsection{Covariant representations and ideals of covariance} \label{start}


Let $(\A,\alpha)$ be a   pair consisting of a $C^*$-algebra $\A$, containing an identity and
an endomorphism $\alpha : \A \to \A$ (by a  homomorphism between $C^*$-algebras we always mean a $^*$-homomorphism). Throughout the paper the pair $(\A,\alpha)$ will be called
a {\em $C^*$-dynamical system}.

\begin{defn}\label{kowariant  rep defn*}
Let $(\A,\alpha)$ be a $C^*$-dynamical system. A
\emph{representation} of  $(\A,\alpha)$ is a  triple $(\pi,U,H)$
consisting of a unital  representation $\pi:\A\to L(H)$ on a
Hilbert space $H$ and an operator $U\in L(H)$ satisfying  the
following relation
\begin{equation}\label{covariance rel1*}
U\pi(a)U^* =\pi(\alpha(a)),\qquad a \in A.
\end{equation}
If $\pi$ is a faithful representation of $\A$, then $(\pi,U,H)$ is
called a \emph{faithful representation}.
 \end{defn}
 Since $\al^n(1)$ is a projection for every $n$ it follows that the operator $U$ in the above definition is necessarily  a \emph{power partial isometry}.

 Note also that iterating \eqref{covariance rel1*} we get
 \begin{equation}\label{n}
 U^{n}\pi(a)U^{*n} =\pi(\alpha^n(a)),\qquad a \in \A, \, n\in \N
 \end{equation}
 which means that
  if     $(\pi,U,H)$ is a representation of   $(\A,\alpha)$, then  $(\pi,U^n,H)$ is a representation of $(\A,\alpha^n)$ for every $n\in \N$.

 The next lemma shows that representations of $C^*$-dynamical systems possess one more important property which plays, in fact, a crucial role in the whole story.
 \begin{lem}\label{iteration of representations}
  If    $(\pi,U,H)$ is a representation of   $(\A,\alpha)$, then  for every $n\in \N$ we have
 \begin{equation}\label{covariance rel2*}
U^{*n}U^n \in \pi(A)'.
\end{equation}.
\end{lem}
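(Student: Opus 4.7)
The plan is to establish the equivalent assertion that $P_n := U^{*n}U^n$, which is a projection (since $U^n$ is a partial isometry, as the paragraph preceding the lemma notes), lies in $\pi(A)'$. The key idea is to exploit the multiplicativity of both $\pi$ and $\alpha^n$, together with the covariance $U^n\pi(a)U^{*n} = \pi(\alpha^n(a))$, to produce an algebraic identity whose positivity structure then forces the desired commutation.

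Concretely, I would compute $U^n\pi(ab)U^{*n}$ in two different ways. On the one hand, it equals $U^n\pi(a)\pi(b)U^{*n}$ since $\pi$ is a homomorphism; on the other hand,
\[
\pi(\alpha^n(ab)) = \pi(\alpha^n(a))\pi(\alpha^n(b)) = U^n\pi(a)U^{*n}U^n\pi(b)U^{*n} = U^n\pi(a)P_n\pi(b)U^{*n}.
\]
Equating and rearranging yields the two-variable identity
\[
U^n\pi(a)(1-P_n)\pi(b)U^{*n} = 0 \qquad \text{for all } a,b\in A.
\]

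The crucial step is then to specialize $a = b^*$: with $X := (1-P_n)\pi(b)$, the identity becomes $(XU^{*n})^*(XU^{*n}) = 0$, so by positivity $XU^{*n} = (1-P_n)\pi(b)U^{*n} = 0$. Multiplying on the right by $U^n$ gives $(1-P_n)\pi(b)P_n = 0$ for every $b\in A$, and taking adjoints with $b$ replaced by $b^*$ yields the companion identity $P_n\pi(b)(1-P_n) = 0$. Together these say $P_n\pi(b) = \pi(b)P_n$, i.e.\ $P_n\in\pi(A)'$, which is the claim.

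The main hurdle to anticipate is recognizing that the bare algebraic relation from homomorphism compatibility is insufficient on its own; one has to specialize at $a = b^*$ to convert it into a positivity statement, and it is that positivity (together with the fact that $1-P_n$ is a projection, so $(1-P_n)^2 = 1-P_n$) which ultimately unlocks the commutation. Once the $n=1$ skeleton is in place, the general $n$ follows either by the same argument or by applying the case $n=1$ to the representation $(\pi, U^n, H)$ of $(A,\alpha^n)$ furnished by equation \eqref{n}.
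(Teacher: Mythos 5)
Your proof is correct and follows essentially the same route as the paper: both arguments hinge on comparing $\pi(\alpha^n(ab))=\pi(\alpha^n(a))\pi(\alpha^n(b))$ with the covariance relation and then using a positivity/C$^*$-identity argument (the paper computes $\|U\pi(a)(1-U^*U)\|^2$ via $\|T\|^2=\|TT^*\|$, which is the same specialization $a=b^*$ you make) to conclude $(1-U^{*n}U^n)\pi(b)U^{*n}=0$. The only cosmetic difference is in the final step, where the paper passes through the intertwining relations $U\pi(a)=\pi(\alpha(a))U$ while you assemble the two corner identities $(1-P_n)\pi(b)P_n=0$ and $P_n\pi(b)(1-P_n)=0$ directly.
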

\begin{Proof}
Let $(\pi,U,H)$ consists of a unital  representation $\pi:\A\to L(H)$ on a
Hilbert space $H$ and an operator $U\in L(H)$ such that \eqref{covariance rel1*} holds. In view of \eqref{n} it suffice to prove \eqref{covariance rel2*} only for $n=1$. As  $U$ is a partial isometry $1-U^*U$ is a projection, and for all $a \in A$
\begin{align*}
\|U\pi(a)(1-U^*U)\|^2&=\|U\pi(a)(1-U^*U)\pi(a^*)U^*\|
\\
&=\|U\pi(a)\pi(a^*)U^* - U\pi(a)U^*U\pi(a^*)U^*\|
\\
&=\|\pi(\al(aa^*))-\pi(\al(a)\al(a^*))\|=0.
\end{align*}
Thus $U\pi(a)=U\pi(a)U^*U=\pi(\al(a))U$ and  by passing to adjoints we also get $U^*\pi(\al(a))=\pi(a)U^*$. Using these two relations we get
$$
U^*U\pi(a)=U^*\pi(\al(a))U=\pi(a)U^*U,
$$
which proves the  assertion.
\end{Proof}

  \begin{rem}\label{remarks on representations} \begin{itemize}
  \item[1.] The   notion  of  a representation  of a $C^*$-dynamical system
 appears  in a similar or identical form, for instance,  in \cite{Stacey},  \cite{Adji_Laca_Nilsen_Raeburn}, \cite{Murphy}, \cite{exel2}, \cite{Lin-Rae}, \cite{Leb-Odz}, \cite{kwa}, \cite{kwa4}. For isometric crossed products, cf. \cite{Stacey},  \cite{Adji_Laca_Nilsen_Raeburn}, \cite{Murphy}, it is assumed that $U$ is an isometry satisfying \eqref{covariance rel1*}.
  In general, cf. \cite{exel2},\cite{Lin-Rae}, \cite{Leb-Odz}, \cite{kwa}, \cite{kwa4}, definitions of representations of $C^*$-dynamical systems contained conditions \eqref{covariance rel1*} and \eqref{covariance rel2*} (for $n=1$) or a certain equivalent of \eqref{covariance rel2*}. Lemma \ref{iteration of representations} shows that \eqref{covariance rel2*} is redundant.

\item[2.] In  \cite[Prop 2.2]{Leb-Odz} and  \cite[Lem. 4.3]{Lin-Rae} it was shown that when  \eqref{covariance rel1*} is assumed relation \eqref{covariance rel2*} is equivalent to the condition
\begin{equation}\label{covariance rel1**}
U\pi(a) =\pi(\alpha(a))U,\qquad a \in A.
\end{equation}
In view of Lemma \ref{iteration of representations}    the conditions \eqref{covariance rel2*} and \eqref{covariance rel1**} are not only equivalent in the presence of \eqref{covariance rel1*} but they actually follow from \eqref{covariance rel1*}.
\end{itemize}
\end{rem}

Relation \eqref{covariance rel2*} imply that for   any  representation
$(\pi,U,H)$ of $(A,\al)$ the set
$$
J:=\{ a\in \A: U^*U \pi(a)=\pi(a)\}$$ is  an ideal in $\A$ (by which we  always mean a closed two-sided ideal).
This ideal will play one of the key roles in the paper. The next statement shows a certain property of $J$ which
is important for the further analysis.

\begin{prop}\label{motivation prop1}
Let $(\pi,U,H)$ be a  representation of $(\A,\alpha)$ and let
\begin{equation}\label{ideals I and J}
I=\{ a\in \A: (1-U^*U) \pi(a)=\pi(a)\}, \qquad J=\{ a\in \A: U^*U \pi(a)=\pi(a)\}.
\end{equation}
Then
$$
I= \ker (\pi\circ\alpha)
 \qquad
\textrm{ and }\qquad I\cap J = \ker\pi.
$$
\end{prop}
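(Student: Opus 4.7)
The plan is to reduce membership in $I$ to the vanishing of $U\pi(a)$, and then use the covariance relation \eqref{covariance rel1*} to identify that with the vanishing of $\pi(\alpha(a))$.

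First I would observe that since $U^*U$ is a projection, $a\in I$ (i.e.\ $(1-U^*U)\pi(a)=\pi(a)$) is equivalent to $U^*U\pi(a)=0$. Next, the identity
\[
\|U\pi(a)\|^2 \;=\; \|\pi(a^*)\,U^*U\,\pi(a)\|
\]
shows that $U^*U\pi(a)=0$ is equivalent to $U\pi(a)=0$, so $a\in I$ if and only if $U\pi(a)=0$.

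The crucial step is the computation
\[
\|U\pi(a)\|^2 \;=\; \|U\pi(a)\pi(a^*)U^*\| \;=\; \|U\pi(aa^*)U^*\| \;=\; \|\pi(\alpha(aa^*))\| \;=\; \|\pi(\alpha(a))\|^2,
\]
where the third equality is \eqref{covariance rel1*} (applied to the selfadjoint element $aa^*$) and the last uses that $\alpha$ is a $*$-homomorphism. Combining with the previous step this gives the chain
\[
a\in I \;\iff\; U\pi(a)=0 \;\iff\; \pi(\alpha(a))=0 \;\iff\; a\in\ker(\pi\circ\alpha),
\]
which proves the first equality.

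For the second equality, $a\in I\cap J$ means $U^*U\pi(a)=0$ and $U^*U\pi(a)=\pi(a)$ simultaneously, hence $\pi(a)=0$; conversely $\pi(a)=0$ trivially satisfies both defining conditions of $I$ and $J$, so $I\cap J=\ker\pi$. I do not anticipate a genuine obstacle: the whole argument is a short manipulation built on the norm-squared trick and the homomorphism property of $\alpha$, and it does not even require the commutation $U^*U\in\pi(A)'$ established in Lemma~\ref{iteration of representations}, although that commutation is what guarantees that $I$ and $J$ are actually ideals.
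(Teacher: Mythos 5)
Your proof is correct, but it takes a genuinely different (and slightly more elementary) route than the paper's. The paper observes that $U(U^*U\pi(a))U^*=\pi(\alpha(a))$ and $U^*\pi(\alpha(a))U=U^*U\pi(a)$, so that conjugation by $U$ and by $U^*$ are mutually inverse maps between $U^*U\pi(\A)$ and $\pi(\alpha(\A))$; this gives $U^*U\pi(a)=0\iff\pi(\alpha(a))=0$ in one stroke, but the verification that $U^*(U\pi(a)U^*)U$ equals $U^*U\pi(a)$ rather than $U^*U\pi(a)U^*U$ silently invokes the commutation $U^*U\in\pi(\A)'$ of Lemma~\ref{iteration of representations}. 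You instead establish the norm identity $\|U\pi(a)\|=\|\pi(\alpha(a))\|$ by two applications of the $C^*$-identity together with \eqref{covariance rel1*}, and deduce the same equivalence; as you correctly point out, this uses nothing beyond \eqref{covariance rel1*} itself — indeed it is the same norm-squared trick the paper uses to prove Lemma~\ref{iteration of representations} in the first place, so your argument is more self-contained. What the paper's version buys in exchange is the explicit isomorphism $U^*U\pi(\A)\cong\pi(\alpha(\A))$ implemented by $U(\cdot)U^*$, which is reused later (e.g.\ in Proposition~\ref{motivation prop}). Your treatment of $I\cap J=\ker\pi$ coincides with the paper's.
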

\begin{Proof}
Observe that $
U (U^*U \pi(a))U^* = U  \pi(a)U^*
$
and
$
U^* (U \pi(a)U^*)U =U^*U \pi(a).
$
Therefore the mappings
$
U(\cdot )U^*:U^*U \pi(a)\mapsto\pi(\alpha(a)),
$
$
U^*(\cdot
)U:\pi(\alpha(a))\mapsto U^*U \pi(a)
$
are each other inverses and in particular
 $U^*U \pi(\A)\cong \pi(\alpha(\A))$, where $U^*U \pi(a)\mapsto\pi(\alpha(a))$. Thus
$$
 \pi(\alpha(a))=0
\Longleftrightarrow U^*U\pi(a)=0 \Longleftrightarrow
(1-U^*U)\pi(a)=\pi(a).
$$
Which means that   $I$ is the kernel of $\pi\circ\alpha$.

Clearly $\ker\pi\subset I\cap J$ and on the other hand
$$
a\in  I\cap J \Rightarrow (1-U^*U) \pi(a)= U^*U \pi(a)\Rightarrow \pi(a)=0,
$$
that is $ I\cap J \subset\ker\pi$.
\end{Proof}
\begin{cor}\label{ort-id}
Let $(\pi,U,H)$ be a  faithful   representation of $(\A,\alpha)$, and let $I$ and $J$ be ideals in $A$ given by \eqref{ideals I and J}. Then
$$
I= {\ker}\, \alpha \qquad and \qquad I\cap J = \{0\}.
$$
\end{cor}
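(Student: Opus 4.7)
The plan is to derive this corollary as a direct specialization of Proposition~\ref{motivation prop1} to the case where $\pi$ is injective. Proposition~\ref{motivation prop1} gives us the two identifications $I=\ker(\pi\circ\alpha)$ and $I\cap J=\ker\pi$ under the sole hypothesis that $(\pi,U,H)$ is a representation (not necessarily faithful). The faithfulness hypothesis here simply collapses the kernels of $\pi$ appearing in those formulas.

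First, I would observe that since $\pi$ is by assumption injective, the composition $\pi\circ\alpha$ has kernel precisely $\ker\alpha$: indeed $\pi(\alpha(a))=0$ iff $\alpha(a)\in\ker\pi=\{0\}$ iff $a\in\ker\alpha$. Substituting this into the first conclusion of Proposition~\ref{motivation prop1} yields $I=\ker\alpha$, which is the first claim.

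Next, for the second claim, the same faithfulness gives $\ker\pi=\{0\}$, and then the second conclusion of Proposition~\ref{motivation prop1} immediately reads $I\cap J=\{0\}$. There is no genuine obstacle here — the work has already been done in Proposition~\ref{motivation prop1}; the corollary is essentially a bookkeeping step that records the important conceptual consequence: for a faithful representation, the two spectral projections $1-U^*U$ and $U^*U$ pick out disjoint pieces of $\A$, and the ``annihilated'' piece is exactly $\ker\alpha$. This is what motivates (and justifies the terminology of) an ideal $J$ being \emph{orthogonal to $\ker\alpha$} in the rest of the paper.
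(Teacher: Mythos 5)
Your proposal is correct and matches the paper's intent exactly: the corollary is stated there without a separate proof precisely because it is the specialization of Proposition~\ref{motivation prop1} to injective $\pi$, with $\ker(\pi\circ\alpha)=\ker\alpha$ and $\ker\pi=\{0\}$ doing all the work. Nothing is missing.
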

Having in mind this observation we introduce the following
\begin{defn}
\label{ort}
Let $I$ and $J$ be two ideals in $\A$. We say that $J$ is \emph{orthogonal to $I$} if
$$
I\cap J =\{0\}.
$$
There exists the biggest ideal orthogonal  to $I$ (in the sense
that it contains all other ideals that are orthogonal to $I$)  denoted by $I^\bot$. This ideal  could be defined
explicitly as $I^\bot=\{a\in \A:aI=\{0\}\}$ and it is called the
\emph{annihilator} of the ideal $I$ in~$\A$.
\end{defn}

Proposition~\ref{motivation prop1} and Corollary~\ref{ort-id} make it natural to introduce the following definition.

\begin{defn}\label{kowariant  rep defn 2} Let $(\pi,U,H)$ be a  representation
 of $(\A,\alpha)$ and let $J$  be an ideal in $\A$.
If   $(\pi,U,H)$  and  $J$ are such that
\begin{equation}\label{covariance rel3}
J=\{ a\in \A: U^*U \pi(a)=\pi(a)\},
\end{equation}
then we will say that
 $(\pi,U,H)$  is a  $J$-\emph{covariant representation}.  In this situation we will also say  that $J$ is the
 \emph{ideal of covariance  for the  representation} $(\pi,U,H)$. If $J = (\ker\alpha)^\bot$, then we simply call $(\pi,U,H)$ a \emph{covariant representation}.
\end{defn}

\begin{rem}\label{remark something1}  For any $C^*$-dynamical system $(A,\al)$  one can always
construct a faithful  representation of $(\A,\alpha)$ (see e.g. the Toeplitz representation defined
in Subsection  \ref{what da hell}) such that $U$ is not an isometry. 
 However, if $(\pi,U,H)$ is a
 covariant representation of $(\A,\alpha)$, then the following implications hold true:
\begin{itemize}
\item[1)] $\alpha$ is a monomorphism $\Rightarrow$ $U$ is an isometry,
\item[2)] $\alpha$ is an automorphism  $\Rightarrow$ $U$ is unitary.
\end{itemize}
Hence, in contrast to arbitrary representations, covariant representations as defined above involve the operators typically used in similar definitions for automorphisms and monomorphisms of $C^*$-algebras, cf. \cite{Pedersen}, \cite{Kadison},  \cite{Paschke},
\cite{Stacey}, \cite{Murphy}.
\end{rem}

In connection with the above remark and for future applications we recall the following observation, which is a part a) of  \cite[Prop. 1.9]{kwa4}, and  follows immediately from Proposition \ref{motivation prop1}.
\begin{prop}\label{proposition najwazniejsze}
Suppose $(\A,\alpha)$ is such that $\ker\alpha$ is unital and let $(\pi,U,H)$ be a representation of $(\A,\alpha)$. Then the following conditions are equivalent:
\begin{itemize}
\item[i)] $(\pi,U,H)$ is a   covariant representation
\item[ii)] $U^*U\in \pi(\A)$
\item[iii)] $U^*U\in \pi(Z(\A))$ ($Z(\A)$ stands for the center of $\A$)
\item[iv)] $U^*U$ is the unit in $\pi((\ker\alpha)^\bot)$
\end{itemize}
\end{prop}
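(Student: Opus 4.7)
The plan is to derive all four equivalences directly from Proposition \ref{motivation prop1} by exploiting the fact that the unit $e$ of $\ker\alpha$ is a \emph{central} projection in $A$ (being the identity of a closed two-sided ideal in a $C^*$-algebra). With this, $(\ker\alpha)^\bot=(1-e)A$, and $\pi((\ker\alpha)^\bot)=\pi(1-e)\pi(A)$ is a unital $C^*$-subalgebra of $L(H)$ whose unit is the projection $\pi(1-e)$.

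The universal input is the following. Since $e\in\ker\alpha\subseteq I=\ker(\pi\circ\alpha)$ by Proposition \ref{motivation prop1}, the defining relation for $I$ gives $(1-U^*U)\pi(e)=\pi(e)$, that is, $U^*U\pi(e)=0$, and consequently
$$
U^*U\le 1-\pi(e)=\pi(1-e).
$$
This inequality is the backbone of all four implications.

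I would then close the cycle (i)$\Rightarrow$(iv)$\Rightarrow$(iii)$\Rightarrow$(ii)$\Rightarrow$(i). For (i)$\Rightarrow$(iv), covariance means $1-e\in(\ker\alpha)^\bot=J$, so by the definition of $J$ we have $U^*U\pi(1-e)=\pi(1-e)$; combined with the universal inequality this forces $U^*U=\pi(1-e)$, which is the unit of $\pi((\ker\alpha)^\bot)$, establishing (iv). The implication (iv)$\Rightarrow$(iii) is immediate from $1-e\in Z(A)$, and (iii)$\Rightarrow$(ii) is trivial.

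The step (ii)$\Rightarrow$(i) is where I expect the main work. Writing $U^*U=\pi(q)$ and using $\pi(qe)=0$ one may replace $q$ by $q(1-e)$ so as to regard $U^*U$ as a projection inside $\pi((\ker\alpha)^\bot)$. Identifying the ideal of covariance $J=\{a\in A:\pi(qa)=\pi(a)\}$ with $(\ker\alpha)^\bot$ will then combine (a) the centrality of $U^*U$ in $\pi(A)$ with (b) the identity $I\cap J=\ker\pi$ from Proposition \ref{motivation prop1}. The chief obstacle here is to upgrade the inequality $U^*U\le\pi(1-e)$ to the equality $U^*U=\pi(1-e)$; once this is achieved, both inclusions $(\ker\alpha)^\bot\subseteq J$ and $J\subseteq(\ker\alpha)^\bot$ follow by direct computation from $\pi(qa)=\pi((1-e)a)$ and $\pi(ea)=0$ for $a\in J$.
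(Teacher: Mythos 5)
Your cycle (i)$\Rightarrow$(iv)$\Rightarrow$(iii)$\Rightarrow$(ii) is correct and complete: the unit $e$ of $\ker\alpha$ is a central projection, $(\ker\alpha)^\bot=(1-e)\A$, and the inequality $U^*U\le\pi(1-e)$ (from $e\in I=\ker(\pi\circ\alpha)$ in Proposition~\ref{motivation prop1}) combined with $1-e\in J$ does force $U^*U=\pi(1-e)$. The problem is (ii)$\Rightarrow$(i), which you explicitly leave open (``the chief obstacle is to upgrade the inequality to the equality'') and which is where all the content of the proposition sits. Two separate things are missing there. First, the upgrade itself: writing $U^*U=\pi(q)$ and setting $b=(1-e)-q(1-e)$, one has $\pi(b)=\pi(1-e)-U^*U$ and $U^*U\pi(b)=0$, hence $b\in I=\ker(\pi\circ\alpha)$; if $\pi$ is faithful this gives $b\in\ker\alpha\cap(\ker\alpha)^\bot=\{0\}$ and hence $U^*U=\pi(1-e)$ --- but you never run this argument. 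Second, even granting $U^*U=\pi(1-e)$, the inclusion $J\subseteq(\ker\alpha)^\bot$ does \emph{not} ``follow by direct computation'': $a\in J$ yields only $\pi(ea)=0$, i.e.\ $ea\in\ker\pi$, not $ea=0$.

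Both gaps are fatal unless $\pi$ is injective (at least on $\ker\alpha$). Indeed, take $\A=\C^2$, $\alpha(a_1,a_2)=(a_1,a_1)$, $H=\C$, $\pi(a_1,a_2)=a_1$ and $U=1$: this is a representation of $(\A,\alpha)$ with $U^*U=1=\pi(1-e)\in\pi(Z(\A))$, so (ii), (iii) and (iv) all hold, while $J=\{a: U^*U\pi(a)=\pi(a)\}=\A\neq(\ker\alpha)^\bot=\C\oplus 0$, so (i) fails. Thus (ii)$\Rightarrow$(i) cannot be closed as you stand; the statement is only used in the paper for faithful representations (cf.\ Proposition~\ref{motivation prop}), and under faithfulness your plan is completed exactly by the two observations above (the defect $b$ lies in $\ker\alpha\cap(\ker\alpha)^\bot$, and $\pi(ea)=0$ forces $ea=0$). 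You should either impose faithfulness of $\pi$ as a hypothesis or record explicitly where injectivity enters your argument.
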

Though we have defined $J$-covariant representations, their existence for all approriate ideals $J$ is not established yet. Henceforth we present a construction resolving this problem.

\subsection{Construction of a faithful $J$-covariant representation. Crossed product} \label{what da hell}
Let us  fix a $C^*$-dynamical system $(\A,\alpha)$, an ideal
$J\subset(\ker\alpha)^\bot$, and a faithful nondegenerate
representation $\pi:\A\to L(H)$.  First we define a  triple
$(\widetilde{\pi},\widetilde{U}, \H)$ by the formulae
$$
\H:=\bigoplus_{n=0}^{\infty} \pi(\alpha^n(1))H, \qquad
(\widetilde{\pi}(a)h)_n:=\pi(\alpha^n(a))h_n,\qquad
(\widetilde{U}h)_n:= h_{n+1}.
$$
One readily sees that $(\widetilde{\pi},\widetilde{U}, \H)$ is a
faithful  representation of $(\A,\alpha)$. Actually $(\widetilde{\pi},\widetilde{U}, \H)$ is $\{0\}$-covariant  and having in mind the
classical associations one could call
$(\widetilde{\pi},\widetilde{U}, \H)$ a \emph{Toeplitz
representation} of $(\A,\alpha)$.

In order to obtain a $J$-covariant representation we introduce the
following algebra of operators
$$
c_0(\N,J):=\{a=\bigoplus_{n\in \N}\pi(a_n): a_n \in
\alpha^n(1)J\alpha^n(1),\,\,\, \lim_{n\to \infty} a_n =0\} \subset L(\H),
$$
and consider the $C^*$-algebra $C^*(c_0(\N,J),\widetilde{U})$ generated by $c_0(\N,J)$ and $\widetilde{U}$. One checks that
$$
\widetilde{U}c_0(\N,J) \widetilde{U}^*\subset c_0(\N,J) , \qquad  \widetilde{U}^*c_0(\N,J) \widetilde{U}\subset c_0(\N,J), \qquad \widetilde{U}^* \widetilde{U} \in c_0(\N,J)',
$$
and hence, see \cite[Prop 2.3]{Leb-Odz}, $C^*(c_0(\N,J),\widetilde{U})$ is the closure of elements of the form
$$
\widetilde{U}^{*n}a^{(-n)} + ...+ \widetilde{U}^*a^{(-1)}+a^{(0)} + a^{(1)}\widetilde{U} + ... +a^{(n)}\widetilde{U}^n,
$$
where $ a^{(k)} \in c_0(\N,J), k=0,\pm1,...,\pm n$. Thus using the relations
$$
\widetilde{\pi}(A)c_0(\N,J)\subset c_0(\N,J), \qquad \widetilde{\pi}(A)\widetilde{U}^*=\widetilde{U}^*\widetilde{\pi}(\al(A)), \qquad \widetilde{U}\widetilde{\pi}(A)=\widetilde{\pi}(\al(A))\widetilde{U}
$$
one sees that $C^*(c_0(\N,J),\widetilde{U})$ is an  ideal in $C^*(\widetilde{\pi}(\A),\widetilde{U})$. Let us now take any faithful non-degenerate representation of the quotient algebra
$$
\widetilde{\pi}_J:C^*(\widetilde{\pi}(\A),\widetilde{U})/C^*(c_0(\N,J),\widetilde{U})\to L(H_J)
$$
and put
$$
U_J:=\widetilde{\pi}_J([\widetilde{U}]), \qquad
\pi_J (a):=\widetilde{\pi}_J ([\widetilde{\pi}(a)]), \ a\in\A,
$$
where $
[\,\cdot\,]:C^*(\widetilde{\pi}(\A),\widetilde{U})\to C^*(\widetilde{\pi}(\A),\widetilde{U})/C^*(c_0(\N,J),\widetilde{U})
$ is the quotient mapping.
\begin{prop}\label{regular representation proposition}
For any ideal $J\subset(\ker\alpha)^\bot$, the triple $(\pi_J,U_J,
H_J)$ defined above is a faithful $J$-covariant representation
of  $(\A,\alpha)$.
\end{prop}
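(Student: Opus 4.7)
The plan is to verify three things: that $(\pi_J,U_J,H_J)$ is a representation of $(A,\alpha)$, that $\pi_J$ is faithful, and that $J$ is precisely its ideal of covariance. The representation property is immediate: the relation $\widetilde U\widetilde\pi(a)\widetilde U^*=\widetilde\pi(\alpha(a))$ already holds in $L(\H)$, so passing to the quotient delivers \eqref{covariance rel1*} for $(\pi_J,U_J)$, and unitality of $\pi_J$ follows from nondegeneracy of $\widetilde\pi_J$. For the remaining two properties the key tool is a \emph{gauge action}. Define unitaries $V_z\in L(\H)$, $z\in\T$, by $(V_zh)_n=z^nh_n$ and set $\gamma_z:=\mathrm{Ad}(V_{\bar z})$. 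A direct calculation gives $\gamma_z(\widetilde\pi(a))=\widetilde\pi(a)$ and $\gamma_z(\widetilde U)=z\,\widetilde U$, so $\gamma$ is a strongly continuous $\T$-action on $C^*(\widetilde\pi(A),\widetilde U)$ that pointwise fixes $c_0(\N,J)$ and therefore preserves the ideal $C^*(c_0(\N,J),\widetilde U)$. Averaging yields a faithful conditional expectation $E=\int_\T\gamma_z\,dz$ onto the fixed point subalgebra, and a routine computation using $\alpha^{n+1}(1)\le\alpha^n(1)$ shows that degree-zero monomials $\widetilde U^{*m}a\widetilde U^m,\widetilde U^ma\widetilde U^{*m}$ in $C^*(c_0(\N,J),\widetilde U)$ lie in $c_0(\N,J)$; consequently the fixed point subalgebra coincides with $c_0(\N,J)$ itself.

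For faithfulness, suppose $\pi_J(a)=0$, equivalently $\widetilde\pi(a)\in C^*(c_0(\N,J),\widetilde U)$. Since $\widetilde\pi(a)$ is gauge invariant, $\widetilde\pi(a)=E(\widetilde\pi(a))\in c_0(\N,J)$, and faithfulness of $\pi$ then forces $\alpha^n(a)\in\alpha^n(1)J\alpha^n(1)\subseteq J$ for every $n\in\N$ together with $\|\alpha^n(a)\|\to 0$. The hypothesis $J\subseteq(\ker\alpha)^\bot$ now enters decisively: because $\ker\alpha\cap(\ker\alpha)^\bot=\{0\}$, the restriction of $\alpha$ to $(\ker\alpha)^\bot$ is an injective $*$-homomorphism and hence isometric. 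Applying this along the chain $a,\alpha(a),\alpha^2(a),\dots\in J\subseteq(\ker\alpha)^\bot$ yields $\|\alpha^n(a)\|=\|a\|$ for every $n$, whence $a=0$. I expect this analytic step to be the main obstacle of the proof, and it pinpoints why the orthogonality condition on $J$ is indispensable.

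For the ideal of covariance, a direct block-diagonal computation shows that $(\widetilde U^*\widetilde U-1)\widetilde\pi(a)\in L(\H)$ equals $-\pi(a)$ on the zeroth summand of $\H$ and vanishes on every other summand. This operator is itself gauge invariant, so the expectation argument of the first paragraph shows it belongs to $C^*(c_0(\N,J),\widetilde U)$ if and only if it belongs to $c_0(\N,J)$, which happens precisely when $a\in J$. Transporting through the faithful representation $\widetilde\pi_J$ this becomes the equivalence $U_J^*U_J\pi_J(a)=\pi_J(a)\iff a\in J$, identifying the ideal of covariance as $J$ and completing the proof.
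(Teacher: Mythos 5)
Your proof is correct and follows essentially the same route as the paper's: the same verification that the covariance relation passes to the quotient, the same identification of $\widetilde{U}^*\widetilde{U}\widetilde{\pi}(a)-\widetilde{\pi}(a)$ with the diagonal sequence supported on the zeroth summand, and the same use of the fact that $\alpha$ is isometric on $J$ (because $J\cap\ker\alpha=\{0\}$) to force $\|a\|=\lim_n\|\alpha^n(a)\|=0$. The only difference is that you justify explicitly, via the gauge action and the conditional expectation $E$, why a diagonal (gauge-invariant) element of the ideal $C^*(c_0(\N,J),\widetilde{U})$ must already lie in $c_0(\N,J)$ --- a step the paper leaves implicit, relying on its earlier description of $C^*(c_0(\N,J),\widetilde{U})$ as the closure of finite sums $\widetilde{U}^{*n}a^{(-n)}+\dots+a^{(n)}\widetilde{U}^{n}$ with $a^{(k)}\in c_0(\N,J)$.
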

\begin{Proof}
That
$(\pi_J,U_J, H_J)$ is a  representation of  $(\A,\alpha)$ is straightforward. What we
need to verify is that it is faithful and $J$-covariant.

To see that $\pi_J$ is
faithful let $a\in \ker \pi_J\subset c_0(\N,J)$. Then $\alpha^{n}(a)\in J$ for all $n=0,1,2...$, and $\pi(\alpha^n(a))\to
0$. Since $J\cap\ker\alpha=\{0\}$ the homomorphism $\alpha: J\to \A$ is isometric,
and as $\pi$ is a faithful representation of $\A$  we
get
$$
\|a\|=\lim_{n\to \infty}\|\pi(\alpha^n(a))\|=0.
$$
To see that $\pi_J$ is $J$-covariant note that for any $a\in \A$
$$
\widetilde{U}^*\widetilde{U}\widetilde{\pi} (a) - \widetilde{\pi} (a)= \{\pi(a),0,0,... \},
$$
and so, by the definition of $c_0(\N,J)$
$$
[\widetilde{U}^*\widetilde{U}\widetilde{\pi} (a) - \widetilde{\pi} (a)] = [0]\ \Leftrightarrow
 \ a\in J.
$$
\end{Proof}
As an immediate corollary we have
\begin{thm}\label{existance theorem}
Let $(\A,\alpha)$ be a $C^*$-dynamical system and  $J$ be   an ideal in $\A$.
Then there exists a   faithful $J$-covariant representation iff \ $\ker\alpha\cap J=\{0\}$.
\end{thm}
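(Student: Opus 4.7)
The plan is to dispatch the two directions using results already established earlier in the section, so the theorem is essentially a packaging of Corollary \ref{ort-id} and Proposition \ref{regular representation proposition}. The nontrivial content is an observation about orthogonality of ideals in a $C^*$-algebra.

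For the forward implication, I would start from the existence of a faithful $J$-covariant representation $(\pi,U,H)$. By definition the ideal $J$ in the theorem coincides with the ideal $J$ appearing in \eqref{ideals I and J} for this representation. Since $\pi$ is faithful, Corollary \ref{ort-id} immediately gives $I \cap J=\{0\}$ with $I=\ker\alpha$, which is exactly the conclusion $\ker\alpha\cap J=\{0\}$.

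For the reverse implication, the issue is that Proposition \ref{regular representation proposition} constructs a faithful $J$-covariant representation only under the hypothesis $J\subset (\ker\alpha)^\bot$, while here we only assume $J\cap\ker\alpha=\{0\}$. Thus the key intermediate step is to show that these two conditions are equivalent for ideals in a $C^*$-algebra. If $J$ and $\ker\alpha$ are closed two-sided ideals with $J\cap\ker\alpha=\{0\}$, then for $a\in J$ and $b\in\ker\alpha$ the product $ab$ lies in $J\cap\ker\alpha=\{0\}$, so $a$ annihilates $\ker\alpha$, i.e.\ $a\in(\ker\alpha)^\bot$ by the explicit description $I^\bot=\{a\in A: aI=\{0\}\}$ given in Definition \ref{ort}. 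Hence $J\subset(\ker\alpha)^\bot$, and Proposition \ref{regular representation proposition} applies and produces the desired faithful $J$-covariant representation $(\pi_J,U_J,H_J)$.

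No real obstacle is expected; the only subtlety is the short orthogonality argument above, since the standing hypothesis in Subsection \ref{what da hell} is the formally stronger condition $J\subset(\ker\alpha)^\bot$ rather than $J\cap\ker\alpha=\{0\}$. Since both directions reduce to citations of previously proved facts together with this one-line observation, the proof should be very short and I would present it as an immediate corollary of Proposition \ref{regular representation proposition} and Corollary \ref{ort-id}.
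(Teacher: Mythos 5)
Your proposal is correct and follows essentially the same route as the paper, which proves the theorem in one line by citing Proposition \ref{regular representation proposition} for sufficiency and Corollary \ref{ort-id} for necessity. The only thing you add is the explicit verification that $J\cap\ker\alpha=\{0\}$ implies $J\subset(\ker\alpha)^\bot$; this is a correct and worthwhile remark, though the paper already encodes it in Definition \ref{ort}, where $(\ker\alpha)^\bot$ is characterized as the largest ideal orthogonal to $\ker\alpha$, hence containing every such $J$.
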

\begin{Proof} Sufficiency follows from Proposition~\ref{regular representation proposition}
while Corollary~\ref{ort-id} implies necessity.
\end{Proof}
The foregoing observations make   the next definition of the crossed product natural.
\begin{defn}\label{crossed product defn}
Let $(\A,\alpha)$ be a $C^*$-dynamical system and $J$ an ideal in $\A$ such that $J\cap \ker \alpha=\{0\}$. The \emph{crossed product} $
C^*(\A,\alpha,J)
$ of $\A$ by $\alpha$ associated with $J$ is   a universal $C^*$-algebra generated by the the copy of the algebra $\A$ and a partial isometry $u$ subject to relations
$$
ua u^*=\alpha(a),\,\,\,\qquad
J=\{a\in \A: u^*u a=a\}.
$$
\end{defn}

The aim of the paper is the analysis of the crossed product
introduced. It will be shown  that this construction covers the
main  known  constructions of crossed products. In addition we
present a thorough description  of the internal structure of this
crossed product (Sections~\ref{22},~\ref{isomorph}) and discuss
its relation to the relative Cuntz-Pimsner algebras
(Section~\ref{C-P}). In particular, by means of the reduction
procedure (Section~\ref{Reduction and canonical}) we show that all
relative Cuntz-Pimsner algebras are associated with orthogonal ideals.

\section{Crossed product and matrix calculus}
\label{22}
This section presents an alternative
definition of the crossed product which provides us with one more
interesting, integral point of view leading to a transparent
description of its internal structure. One can also consider the construction described here as a sort of regular representation of the crossed product.

\subsection{Matrix calculus}
\label{2}

We introduce a  matrix calculus that will be an algebraic framework for  our  crossed product.

Let us denote by $\M(\A)$  the set of infinite matrices
$\{a_{i,j}\}_{i,j\in\N} $  indexed by pairs of
natural numbers  with entries $a_{i,j}$ in $\A$  such that
$$ a_{i,j}\in \alpha^i(1)\A \alpha^j(1), \qquad i, j\in \N,$$
and there is at most finite number of $a_{i,j}$ which are non-zero.

We will take advantage of this standard matrix notation when
defining operations on $\M(\A)$ and investigating a natural
homomorphism from $\M(\A)$ to the covariance algebras generated by
 representations of $(A, \alpha)$. However,  when calculating norms of elements
in covariance algebras, it is more handy to index the entries of
an element in $\M(\A)$ by a pair consisting of a natural number
and an integer. Hence  we will paralellely use two notations
concerning matrices in $\M(\A)$. Namely, we presume the following
identifications
 $$
a_{i,j}=a^{(j-i)}_{\min\{i,j\}},\quad i,j \in \N,\quad  \qquad a_{n}^{(k)}=\begin{cases}
a_{n,k+n},& k\geq 0\\
a_{n-k,n},& k < 0
\end{cases}\quad n \in \N,\,\, k\in \Z,
$$
under which we have two equivalent matrix presentations
$$
 \left(
\begin{array}{cc c c}
  a_{0,0}      &  a_{0,1}   &  a_{0,2}  &   \cdots  \\
  a_{1,0}     &  a_{1,1}   &  a_{1,2}  &   \cdots  \\
  a_{2,0}     &  a_{2,1}  &  a_{2,2}  &   \cdots  \\
    \vdots       &   \vdots     &   \vdots    &    \ddots
\end{array}\right)
=
\left(
\begin{array}{cc c c}
  a_0^{(0)}      &  a_0^{(1)}   &  a_0^{(2)}  &   \cdots  \\
  a_0^{(-1)}     &  a_1^{(0)}   &  a_1^{(1)}  &   \cdots  \\
  a_0^{(-2)}     &  a_1^{(-1)}  &  a_2^{(0)}  &   \cdots  \\
    \vdots       &   \vdots     &   \vdots    &    \ddots
\end{array}\right).
$$
The use of each of these   conventions  will always be clear from the context.

 We define the
addition, multiplication by scalar, and
involution on $\M(\A)$  in a natural manner. Namely, for $a=\{a_{ij}\}_{i,j\in\N}$ and $b=\{b_{ij}\}_{i,j\in\N}$ in $\M(\A)$
we put
\begin{gather}\label{add1}
 (a+b)_{m,n}=a_{m,n}+b_{m,n},\\[6pt]
\label{mulscal1}
 (\lambda a)_{m,n}=\lambda a_{m,n}\\[6pt]
\label{invol1}
 (a^*)_{m,n}=a_{n,m}^*.
\end{gather}
 Moreover,  we introduce a convolution multiplication '$\star$' on
$\M(\A)$, which is a reflection of the operator multiplication in covariance algebras.
 We set
\begin{equation}\label{star1}
a\star b= a\cdot\sum_{j=0}^\infty  \Lambda^j(b)+ \sum_{j=1}^\infty \Lambda^j(a)\cdot b
\end{equation}
where $\cdot$ is the standard multiplication of matrices and  mapping  $\Lambda:
\M(\A)
\rightarrow \M(\A)$ is defined
 to act as follows: $\Lambda(a)_{i,j}=\alpha(a_{i-1,j-1})$, for $i,j> 0$, and $\Lambda(a)_{i,j}=0$ otherwise,
 that is $\Lambda$ assumes the following shape
\begin{equation}\label{Lambda}
\Lambda(a)= \left(
\begin{array}{cc c c c }
 0      &         0           &           0         &            0       & \cdots  \\
 0      &  \alpha(a_{0,0})  &  \alpha(a_{0,1})  &  \alpha(a_{0,2}) & \cdots  \\
 0      &  \alpha(a_{1,0}) &  \alpha(a_{1,1})  &  \alpha(a_{1,2}) & \cdots  \\
 0      &  \alpha(a_{2,0}) &  \alpha(a_{2,1}) &  \alpha(a_{2,2}) & \cdots  \\
\vdots  &       \vdots        &       \vdots        &       \vdots       & \ddots
\end{array}\right).
\end{equation}
\begin{prop}
The set $\M(\A)$ with operations \eqref{add1},
\eqref{mulscal1}, \eqref{invol1}, \eqref{star1} becomes an algebra with involution.
\end{prop}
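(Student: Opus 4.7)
The plan is to verify each algebra axiom in turn. Most reduce to routine checks, with associativity of $\star$ being the single substantive obstacle. I would first check well-definedness of the operations. For $+$, scalar multiplication and $(\cdot)^*$ this is immediate. For $\star$, since $a,b\in\M(\A)$ have only finitely many non-zero entries and $\Lambda^s(b)$ is supported in rows and columns of index $\geq s$, only finitely many $s$ contribute in each of the two sums in \eqref{star1}, and the result has finite support. One checks that $(a\star b)_{i,j}\in\alpha^i(1)A\alpha^j(1)$: a summand arising from $a_{i,k}\cdot\alpha^s(b_{k-s,j-s})$ lies in $\alpha^i(1)A\alpha^k(1)\cdot\alpha^k(1)A\alpha^j(1)\subseteq\alpha^i(1)A\alpha^j(1)$ because $\alpha^s(\alpha^{k-s}(1)A\alpha^{j-s}(1))\subseteq\alpha^k(1)A\alpha^j(1)$. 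The vector space axioms are immediate from the entrywise definitions.

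Next I verify the involution axioms. The identities $(a+b)^*=a^*+b^*$, $(\lambda a)^*=\bar\lambda a^*$ and $(a^*)^*=a$ follow at once from \eqref{invol1}. For $(a\star b)^*=b^*\star a^*$ I would first record the two auxiliary identities $\Lambda(a^*)=\Lambda(a)^*$ (immediate from the definitions) and $\Lambda(a\cdot b)=\Lambda(a)\cdot\Lambda(b)$ (a one-line computation using multiplicativity of $\alpha$); the second gives $\Lambda^s(a^*)=\Lambda^s(a)^*$ and $\Lambda^s$ is additive. Then
\[
(a\star b)^* = \Bigl(a\cdot\sum_{s\geq 0}\Lambda^s(b)\Bigr)^{*} + \Bigl(\sum_{s\geq 1}\Lambda^s(a)\cdot b\Bigr)^{*} = \sum_{s\geq 0}\Lambda^s(b^*)\cdot a^* + \sum_{s\geq 1}b^*\cdot\Lambda^s(a^*) = b^*\star a^*.
\]
Bilinearity of $\star$ is analogous, reducing to bilinearity of matrix multiplication and additivity of $\Lambda$.

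The one genuine obstacle is associativity $(a\star b)\star c=a\star(b\star c)$. By bilinearity it suffices to check it on the ``matrix units'' $[a]_{i,j}$ — matrices with a single non-zero entry $a\in\alpha^i(1)A\alpha^j(1)$ at position $(i,j)$ — which span $\M(\A)$. Since $\Lambda^s([a]_{i,j})=[\alpha^s(a)]_{i+s,j+s}$, the formula \eqref{star1} collapses to the rule
\[
[a]_{i,j}\star[b]_{k,l}=\begin{cases}[a\,\alpha^{j-k}(b)]_{i,\,l+j-k},& j\geq k,\\ [\alpha^{k-j}(a)\,b]_{i+k-j,\,l},& j<k,\end{cases}
\]
where the two branches agree when $j=k$. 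One then computes $([a]_{i,j}\star[b]_{k,l})\star[c]_{m,n}$ and $[a]_{i,j}\star([b]_{k,l}\star[c]_{m,n})$ by a case analysis on the orderings of $j$ versus $k$ and of the resulting second column index versus $m$. In every case both sides reduce to a single matrix unit of the form $[\alpha^{p}(a)\,\alpha^{q}(b)\,\alpha^{r}(c)]_{I,J}$, where the exponents $p,q,r$ and the positions $I,J$ are determined by iterated application of the rule above; the equality is then forced by multiplicativity of $\alpha$. The case analysis is mechanical but somewhat bulky; the conceptual content behind it is simply that each $[a]_{i,j}$ models the formal operator $U^{*i}\pi(a)U^j$ (in the spirit of Lemma \ref{iteration of representations}) and that $\star$ is precisely the multiplication their composition forces, so associativity of $\star$ ultimately reflects associativity of operator composition.
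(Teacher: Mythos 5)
Your proof is correct, but you handle the one substantive point --- associativity of $\star$ --- by a genuinely different route than the paper. The paper dismisses everything except associativity as straightforward (as you do), and then proves associativity by a single uniform manipulation of the defining sums in \eqref{star1}: expanding $a\star(b\star c)$, pushing $\Lambda^k$ through products via the fact that $\Lambda$ preserves the \emph{ordinary} matrix product, and reindexing the resulting double sums until $(a\star b)\star c$ appears. That argument needs no case distinctions and no reduction to special elements. You instead reduce by bilinearity to the matrix units $[a]_{i,j}$, derive the closed-form product rule (which I checked is the correct specialization of \eqref{star1}), and then appeal to a case analysis on the column/row indices; the cases do all close up (each side collapses to a single matrix unit with matching entry and position, by multiplicativity of $\alpha$), but you leave them unwritten, and since they are the entire content of the proof you should write out at least one representative chain, e.g.\ $j\ge k$ with $l+j-k<m$, where the two sides bracket differently. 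What your route buys is an explicit multiplication table for elementary matrices, which is in effect the same computation the paper performs later in the proof of Proposition \ref{dense subalgera structure prop} when it multiplies $U^{*p}\pi(a_{p,r})U^{r}$ by $U^{*s}\pi(b_{s,t})U^{t}$ in the two cases $s\le r$ and $r<s$; so your associativity argument and the paper's verification that $\Psi_{(\pi,U)}$ is multiplicative are two faces of one calculation. What the paper's route buys is brevity and the absence of cases. Your closing remark that associativity ``reflects associativity of operator composition'' is a good heuristic but correctly not used as the proof: at this stage no faithful multiplicative $\Psi_{(\pi,U)}$ is available (its construction presupposes that $\M(\A)$ is an algebra), so the purely algebraic verification is indeed required. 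One last cosmetic point: your displayed identity for $(a\star b)^*$ equals $b^*\star a^*$ only after moving the $s=0$ term from one sum to the other using $\Lambda^0=\mathrm{id}$; it is worth saying so explicitly.
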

\begin{Proof}
The only thing we show is  associativity of  multiplication \eqref{star1}, the rest is straightforward.
For that purpose we note that $\Lambda$ preserves the standard matrix multiplication and thus we have
$$
a\star (b \star c)= a\star  \Big(b\cdot\sum_{j=0}^\infty  \Lambda^j(c)+ \sum_{j=1}^\infty \Lambda^j(b)\cdot c\Big)
$$
$$
=a\sum_{k=0}^\infty  \Lambda^k\Big( b\sum_{j=0}^\infty  \Lambda^j(c)+
\sum_{j=1}^\infty \Lambda^j(b) c \Big)+ \sum_{k=1}^\infty \Lambda^k(a) \Big( b\sum_{j=0}^\infty  \Lambda^j(c)+
\sum_{j=1}^\infty \Lambda^j(b) c
\Big)$$
$$
=\sum_{k,j=0}^\infty  a \Lambda^k( b)\Lambda^{j}(c) +  \sum_{k=1,j=0}^\infty \Lambda^{k}(a) b\cdot \Lambda^j(c) +
\sum_{k,j=1}^\infty \Lambda^{k}(a)\Lambda^{j}(b) c
$$
$$
=\Big(a\sum_{k=0}^\infty  \Lambda^k(b)+ \sum_{k=1}^\infty \Lambda^k(a) b\Big)\sum_{j=0}^\infty \Lambda^j(c) +
\sum_{j=1}^\infty \Lambda^j \Big(a\sum_{k=0}^\infty  \Lambda^k(b)+ \sum_{k=1}^\infty \Lambda^k(a) b\Big) c
$$
$$
  \Big(a\cdot\sum_{k=0}^\infty  \Lambda^k(b)+ \sum_{k=1}^\infty \Lambda^k(a)\cdot b\Big)\star c= (a\star b) \star c.
$$
\end{Proof}
We embed $\A$ into $\M(\A)$ by identifying an element $a\in \A$ with
the matrix $\{a_{m,n}\}_{m,n\in \N}$ where $a_{0,0}=a$ and $a_{m,n}=0$
if $(m,n)\neq (0,0)$.  We also define a 'partial isometry'
$u=\{u_{m,n}\}_{m,n\in \N}$ in $\M(\A)$ such that
$u_{0,1}=\alpha(1)$ and $u_{m,n}=0$ if $(m,n)\neq (0,1)$. In other
words, we adopt the following notation
\begin{equation}
\label{notation stupid} u= \left(
\begin{array}{cc c c}
  0      &  \alpha(1)   &  0  &   \cdots  \\
  0     &  0   &  0  &   \cdots  \\
  0     & 0  &  0  &   \cdots  \\
    \vdots       &   \vdots     &   \vdots    &    \ddots
\end{array}\right)\quad \mathrm{and \,\,} \quad
a=\left(\begin{array}{cc c c}
  a      &  0     &  0    & \cdots  \\
  0      &  0 &  0    & \cdots  \\
  0     &  0    &  0   & \cdots  \\
     \vdots                 &    \vdots                &      \vdots             &\ddots
\end{array}\right), \quad \mathrm{for \,\,} a\in \A.
\end{equation}
 One can  check that the $^*$-algebra $\M(\A)$ is generated
by $u$ and $\A$. Furthermore, for every $a\in \A$ we have
$$
 u\star a \star u^* =\alpha(a) \quad\textrm{ and } \quad  u^*\star a \star u = \left(
\begin{array}{cc c c}
  0      &  0    &  0  &   \cdots  \\
  0     &  \alpha(1)a\alpha(1)   &  0  &   \cdots  \\
  0     & 0  &  0  &   \cdots  \\
    \vdots       &   \vdots     &   \vdots    &    \ddots
\end{array}\right).
 $$
 \begin{prop}\label{dense subalgera structure prop}
Let $(\pi,U,H)$ be  a  representation of $(\A,\alpha)$. Then there
exists a  unique $^*$-homomorphism $\Psi_{(\pi, U)}$ from $\M(\A)$
onto a $^*$-algebra $C_0^*(\pi(\A),U)$ generated by $\pi(\A)$ and
$U$, such that
$$
\Psi_{(\pi, U)}(a)=\pi(a),\quad a\in \A,\qquad \Psi_{(\pi, U)}(u)=U.
$$
Moreover, $\Psi_{(\pi, U)}$ is given by the formula
\begin{equation}\label{Psi form eq}
\Psi_{(\pi, U)}( \{a_{m,n}\}_{m,n\in\N})=\sum_{m,n=0}^\infty U^{*m}\pi(a_{m,n}) U^n,
\end{equation}
and thus $C_0^*(\pi(\A),U)=\left\{\sum_{m,n=0}^\infty U^{*m}\pi(a_{m,n}) U^n: \{a_{m,n}\}_{m,n\in\N}\in \M(\A)\right\}$.
\end{prop}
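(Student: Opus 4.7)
The plan is to exploit the fact that $\M(\A)$ is generated as a $^*$-algebra by $\A\cup\{u\}$ (a fact asserted in the paragraph preceding the proposition), which immediately yields uniqueness: any two $^*$-homomorphisms agreeing with $a\mapsto \pi(a)$ on $\A$ and sending $u\mapsto U$ must coincide. For existence, I would \emph{define} $\Psi_{(\pi,U)}$ directly by the explicit formula \eqref{Psi form eq}; this is well-posed because only finitely many entries $a_{m,n}$ of a matrix in $\M(\A)$ are nonzero, so the sum is finite. Additivity and $\C$-linearity are obvious. For $^*$-preservation, note that $(a^*)_{m,n}=a_{n,m}^*$, so
\[
\Psi(a^*)=\sum_{m,n}U^{*m}\pi(a_{n,m}^*)U^n=\Big(\sum_{m,n}U^{*n}\pi(a_{n,m})U^m\Big)^*=\Psi(a)^*
\]
after relabelling. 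That $\Psi(a)=\pi(a)$ and $\Psi(u)=U$ are immediate from the formula.

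The real content is multiplicativity, and this is the step I expect to be the main obstacle. The essential tool is the pair of identities
\[
U^kU^{*k}=\pi(\alpha^k(1)),\qquad U^{*k}U^k\in\pi(\A)',
\]
the first obtained by taking $a=1$ in \eqref{n} and using unitality of $\pi$, the second being \eqref{covariance rel2*}. Combined with \eqref{covariance rel1**} (and its adjoint $\pi(a)U^*=U^*\pi(\alpha(a))$), they give the key splitting
\[
U^nU^{*k}=\begin{cases} U^{n-k}\,\pi(\alpha^k(1)), & n\ge k,\\ \pi(\alpha^n(1))\,U^{*(k-n)}, & n<k,\end{cases}
\]
so that a generic product $U^{*m}\pi(a_{m,n})U^n\cdot U^{*k}\pi(b_{k,l})U^l$ collapses, using $a_{m,n}\in\alpha^m(1)\A\alpha^n(1)$ and $b_{k,l}\in\alpha^k(1)\A\alpha^l(1)$, to
\[
U^{*m}\pi\!\bigl(a_{m,n}\alpha^{n-k}(b_{k,l})\bigr)U^{n-k+l}\quad\text{(if } n\ge k),
\]
and
\[
U^{*(m+k-n)}\pi\!\bigl(\alpha^{k-n}(a_{m,n})\,b_{k,l}\bigr)U^l\quad\text{(if } n<k).
\]

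Now I would unwind the definition of $\star$. Because $\Lambda^j(b)_{i,l}=\alpha^j(b_{i-j,l-j})$ for $i,l\ge j$, the first case, parameterised by $j=n-k\ge 0$, reassembles precisely into $\sum_{j\ge 0}(a\cdot\Lambda^j(b))_{p,q}$ with $p=m$, $q=n-k+l$; the second case, parameterised by $j=k-n\ge 1$, reassembles into $\sum_{j\ge 1}(\Lambda^j(a)\cdot b)_{p,q}$ with $p=m+k-n$, $q=l$. Summing the two families of contributions gives exactly the entries of $a\star b$, so $\Psi(a)\Psi(b)=\Psi(a\star b)$, confirming multiplicativity. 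This case analysis/reindexing is the only nontrivial computation; once it is done, everything else is formal.

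Finally, surjectivity onto $C_0^*(\pi(\A),U)$ is automatic: the image of $\Psi_{(\pi,U)}$ is a $^*$-subalgebra of $L(H)$ containing $\pi(\A)$ and $U$, hence contains $C_0^*(\pi(\A),U)$, while the explicit formula \eqref{Psi form eq} shows that every image element is already of the displayed sum form, so the two coincide. This also establishes the last assertion of the proposition, namely the explicit description of $C_0^*(\pi(\A),U)$.
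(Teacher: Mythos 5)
Your proposal is correct and follows essentially the same route as the paper's proof: uniqueness from generation of $\M(\A)$ by $\A\cup\{u\}$, and multiplicativity via the same two-case analysis ($n\ge k$ versus $n<k$) using $U^kU^{*k}=\pi(\alpha^k(1))$, $U^{*k}U^k\in\pi(\A)'$ and the intertwining relation, followed by the same reindexing into the two families $a\cdot\sum_{j\ge 0}\Lambda^j(b)$ and $\sum_{j\ge 1}\Lambda^j(a)\cdot b$ that make up $a\star b$. The only cosmetic difference is that you package the commutation step as an explicit formula for $U^nU^{*k}$, whereas the paper inserts the projections $U^sU^{*s}$, $U^{*r-s}U^{r-s}$ inline.
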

\begin{Proof} It is clear that $\Psi_{(\pi, U)}$ has to satisfy    \eqref{Psi form eq}.
Thus it is enough to check that $\Psi_{(\pi, U)}$ is a $^*$-homomorphism, and in fact we only need to show
that $\Psi_{(\pi, U)}$ is multiplicative as the rest is obvious. For that purpose
let us fix two matrices $a=\{a_{m,n}\}_{m,n\in\N},b=\{b_{m,n}\}_{m,n\in\N}\in \M(\A)$. We will examine the product
$$
c_{p,r,s,t}=U^{*p}\pi(a_{p,r})U^{r}  U^{*s}\pi(b_{s,t} )U^{t}
$$
Depending on the relationship between $r$ and $s$ we have two cases.
\begin{Item}{1)}
 If $s \leq r$, then using equality $(U^s U^{*s})\pi(b_{s,t})=\pi(\al^s(1)b_{s,t})=\pi(b_{s,t}) $ and relation $U^{*r-s} U^{r-s}\in \pi(A)' $, see Lemma \ref{iteration of representations}, we get
$$
c_{p,r,s,t}=U^{*p}\pi(a_{p,r})U^{r-s} (U^s U^{*s})\pi(b_{s,t}) U^t=
U^{*p}\pi(a_{p,r})U^{r-s} U^{*r-s} U^{r-s}\pi(b_{s,t}) U^t
$$
$$
=U^{*p}\pi(a_{p,r})U^{r-s} \pi(b_{s,t})(U^{*r-s} U^{r-s}) U^t=U^{*p}\pi(a_{p,r} \alpha^{r-s}(b_{s,t})) U^{t+r-s}
$$
Putting $r-s=j$, $r=i$, $p=m$ and $t+r-s =n$ we get
$$
c_{m,r,s,n-r+s}=c_{p,r,s,t}=U^m \pi(a_{m,i} \alpha^{j}(b_{i-j,n-j})) U^{n}
$$
and thus
$$
\sum_{s,r \in \N \atop s \leq r}  c_{m,r,s,n-r+s}=
\sum_{j=0}^\infty\sum_{i=j}^\infty  U^m \pi(a_{m,i} \alpha^{j}(b_{i-j,n-j})) U^{n}=
U^m \pi(\big(a\cdot\sum_{j=0}^\infty  \Lambda^j(b)\big)_{m,n})  U^{n}
$$
\end{Item}
\begin{Item}{2)}
If $r <s$, then analogously
$$
c_{p,r,s,t}=U^{*p} \pi(a_{p,r})(U^r  U^{*r}) U^{*s-r}\pi(b_{s,t}) U^l=
U^{*p}\pi( a_{p,r})(U^{*s-r}U^{r-s})U^{*r-s}\pi(b_{s,t}) U^t
$$
$$
=U^{*p} (U^{*s-r}U^{s-r})\pi(a_{p,r})U^{*s-r}\pi(b_{s,t} )U^t=U^{*p+s-r} \pi(\alpha^{s-r}(a_{p,r})b_{s,t}) U^t
$$
Putting $s-r=j$, $r=i$, $p +s-r=m$ and $t=n$ we get
$$
c_{m-s+r,r,s,n}=c_{p,r,s,t}=U^m\pi( \alpha^{j}(a_{m-j,i-j})b_{i,n} ) U^{n}
$$
and thus
$$
\sum_{s,r \in \N \atop r< s}  c_{m-s+r,r,s,n}=
\sum_{j=1}^\infty\sum_{i=j}^\infty  U^m \pi(\alpha^{j}(a_{m-j,i-j})b_{i,n} ) U^{n}=
U^m \pi(\big(\sum_{j=1}^\infty  \Lambda^j(a)\cdot b\big)_{m,n})  U^{n}
$$
\end{Item}
\noindent Using the formulas obtained in 1) and 2)  we have
$$
\Psi_{(\pi, U)}(a) \Psi_{(\pi, U)}( b)= \sum_{p,r,s,t\in \N} c_{p,r,s,t}=
\sum_{p,r,s,t\in \N \atop s\leq r}c_{p,r,s,t} + \sum_{p,r,s,t\in \N \atop r<s} c_{p,r,s,t}$$
$$
=\sum_{m,r,s,n\in \N \atop s\leq r,\,  n\leq r-s}c_{m,r,s,n-r+s} +
\sum_{m,r,s,n\in \N \atop r<s,\,m\leq s-r} c_{m-s+r,r,s,n}=
\sum_{m,n\in \N} U^{*m} (a\star b)_{m,n} U^n =\Psi_{(\pi, U)}(a\star b)
$$
and the proof is complete.
\end{Proof}

 We now examine the structure of $\M(\A)$.
    We will say that  a matrix $\{a_{n}^{(m)}\}_{n\in\N, m\in \Z}$ in $\M(\A)$  is  $k$-\emph{diagonal},
    where $k$ is an integer, if it satisfies the condition
$$
a_{n}^{(m)}\neq 0  \Longrightarrow\,\, m=k.
$$
In other words $k$-diagonal matrix is the one of the form
\begin{center}\setlength{\unitlength}{1mm}
\begin{picture}(110,24)(-5,-10)

\put(-10,0){$
 \left(\begin{array}{c}\begin{xy}
\xymatrix@C=-1pt@R=3pt{
   &      \, \,   &    \qquad \,\,0 \,   \\
     &      \,     &       \\
      &     \, 0      &       \\
     &       &   \qquad
        }
  \end{xy}
  \end{array}\right)
 $  }
  \put(3.5,11){\scriptsize $k$}
  \qbezier[10](2, 10)(4,10)(6, 10)

 \qbezier[42](2, 10)(9,2.5)(16,-5)
  \qbezier[36](6, 10)(11,4.5)(16,-1)

   \put(29,0){if $k\geq 0$, or}
  \put(98,0){if $k< 0$.}
 \put(61,0){$
 \left(\begin{array}{c}\begin{xy}
\xymatrix@C=-1pt@R=4pt{
   &      \, \,   &    \,   \\
     &      \,     &  0      \\
      &     \,      &       \\
   0 \,\, &       &   \qquad
        }
  \end{xy}
  \end{array}\right)$}
  \put(58,3.6){\scriptsize $|k|$}
  \qbezier[10](64,2) (64,4) (64,5.5)
  \qbezier[42](64,5.5)(72.4,-1.5)(80.8,-8)
  \qbezier[36](64,2)(70,-3.5)(76,-8)
     \end{picture}
     \end{center}
  The  linear space consisting of all $k$-diagonal matrices will be denoted by $\mathcal{M}_{k}$.
  These spaces will correspond to spectral subspaces,  see Corollaries \ref{wniosek o spektralnych podprzestrzeniach} and \ref{spectral-sp}. We write $\M_{k}\star\mathcal{M}_{l}$ for the linear span of elements $a \star b$, $a\in \M_k$, $b\in \M_l$.
\begin{prop}
\label{**}
The spaces $\M_k$ define  a $\Z$-graded algebra structure on $\M(\A)$. Namely
$$
\M(\A)=\bigoplus_{k\in \Z} \M_k,$$
 and for every $k$, $l\in \Z$ we have the following relations
$$
\M_{k}^*=\M_{-k},\qquad\M_{k}\star\mathcal{M}_{l}\subset \M_{k+l}.
$$
In particular, $\M_{0}$ is a $^*$-algebra,  $\M_{k}\star \M_{-k}$
is a self-adjoint two sided  ideal in $\M_{0}$.
\end{prop}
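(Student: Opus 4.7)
The plan is to verify the four claims in turn, most of which reduce to tracking how the diagonal index $k = j-i$ behaves under the operations on $\M(\A)$.

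\textbf{Direct sum decomposition.} Every element $a = \{a_{i,j}\}_{i,j \in \N} \in \M(\A)$ has only finitely many non-zero entries, so writing
\[
a = \sum_{k \in \Z} a^{(k)}, \qquad (a^{(k)})_{i,j} := \begin{cases} a_{i,j}, & j-i = k, \\ 0, & \text{otherwise}, \end{cases}
\]
gives a finite decomposition with $a^{(k)} \in \M_k$. Uniqueness is immediate since $\M_k \cap \sum_{l \neq k} \M_l = \{0\}$ entrywise.

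\textbf{Involution and multiplication.} For the involution: if $a \in \M_k$, then $(a^*)_{i,j} = a_{j,i}^*$ is non-zero only when $i - j = k$, i.e.\ $j-i = -k$, hence $a^* \in \M_{-k}$. For the multiplication, the key observation is that the shift map $\Lambda$ preserves the grading: since $\Lambda(a)_{i,j} = \alpha(a_{i-1,j-1})$ for $i,j > 0$ and zero otherwise, the diagonal index $j-i$ is preserved, so $\Lambda(\M_k) \subset \M_k$ and therefore $\Lambda^j(\M_k) \subset \M_k$ for all $j \geq 0$. For ordinary matrix multiplication, if $a \in \M_k$ and $b \in \M_l$, then
\[
(a \cdot b)_{i,j} = \sum_{m \in \N} a_{i,m} b_{m,j}
\]
can be non-zero only when $m - i = k$ and $j - m = l$, forcing $j - i = k + l$, so $a \cdot b \in \M_{k+l}$. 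Combining these two facts with the defining formula
\[
a \star b = a \cdot \sum_{j=0}^\infty \Lambda^j(b) + \sum_{j=1}^\infty \Lambda^j(a) \cdot b
\]
(which is a finite sum for $a, b \in \M(\A)$) yields $\M_k \star \M_l \subset \M_{k+l}$.

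\textbf{Consequences for $\M_0$ and the ideal property.} Taking $k = l = 0$ gives $\M_0 \star \M_0 \subset \M_0$, and $\M_0^* = \M_0$ by the involution property, so $\M_0$ is a $^*$-subalgebra. For $\M_k \star \M_{-k}$, associativity of $\star$ (established in the previous proposition) together with $\M_0 \star \M_k \subset \M_k$ and $\M_{-k} \star \M_0 \subset \M_{-k}$ yields, for any $c \in \M_0$, $a \in \M_k$, $b \in \M_{-k}$,
\[
c \star (a \star b) = (c \star a) \star b \in \M_k \star \M_{-k}, \qquad (a \star b) \star c = a \star (b \star c) \in \M_k \star \M_{-k},
\]
so $\M_k \star \M_{-k}$ is a two-sided ideal in $\M_0$. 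Finally, $(a \star b)^* = b^* \star a^* \in \M_k^* \star \M_{-k}^*  \cdot \text{(reversed)} = \M_k \star \M_{-k}$ shows self-adjointness.

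The arguments are essentially bookkeeping; the only point requiring any care is confirming that $\Lambda$ respects the diagonal grading, which is what makes the $\star$-product compatible with the $\Z$-grading despite its more complicated form compared to ordinary matrix multiplication. No real obstacle is anticipated.
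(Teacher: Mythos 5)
Your proof is correct and follows essentially the same route as the paper: the paper dismisses the relations $\M_k^*=\M_{-k}$, $\M_k\star\M_l\subset\M_{k+l}$ and self-adjointness as "elementary matrix calculus" and then derives the ideal property from associativity exactly as you do. Your write-up simply fills in that matrix calculus explicitly (the key points being that $\Lambda$ preserves the diagonal index and that ordinary matrix multiplication adds it), which is a faithful expansion of the paper's argument rather than a different one.
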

\label{coef-alg}
\begin{Proof}
Relations  $\M_k^*=\M_{-k}$, $\M_k\star \M_l\subset \M_{k+l}$ and
$(\M_k \star \M_k^*)^*=(\M_k \star \M_k^*)$ can be checked by
means of an elementary matrix calculus. Using these relations we
get
$$
\M_0 \star (\M_k \star \M_k^*)= (\M_0 \star \M_k)\star \M_k^*\subset \M_k\star \M_k^* ,
$$
$$
(\M_k\star \M_k^*)\star \M_0 = \M_k\star (\M_k^*\star
\M_0)\subset  (\M_k \star \M_k^*),
$$
and thus $\M_k\star \M_k^*$ is an ideal in $\M_0$.
\end{Proof}

Proposition \ref{**} indicates in particular that $\M_0$ may be  regarded as  a coefficient algebra
 in the sense of \cite{Leb-Odz} for  $\M(\A)$.
This will be shown explicitly in Proposition~\ref{nie dam rady umre}.

\begin{cor}\label{wniosek o spektralnych podprzestrzeniach}
  Let $(\pi,U,H)$ be  a  representation of $(\A,\alpha)$ and let
  $
B_k= \Psi_{(\pi, U)}(\M_k)
 $
 be the linear space consisting of the elements of the form
 $$
\sum_{n=0}^N U^{*n} \pi(a_{n}^{(k)}) U^{n+k}, \quad  \textrm{ if }\,\, k \geq 0, \quad \textrm{ or}
\quad \sum_{n=0}^N U^{*n+|k|} \pi(a_{n}^{(k)})U^{n}, \quad  \textrm{ if }\,\, k < 0.
$$
Then for every $k$ and $l\in \Z$ we have the following relations
$$
B_k^*=B_{-k},\qquad B_kB_l\subset B_{k+l}
$$
In particular, $B_0$ is a $C^*$-algebra,  $B_k B_k^*$ is a self-adjoint two sided  ideal in $B_0$.
\end{cor}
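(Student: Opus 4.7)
The plan is to obtain the corollary as a direct transfer of the graded algebraic structure of $\M(\A)$ to its image in $L(H)$ under the $^*$-homomorphism $\Psi_{(\pi,U)}$ from Proposition~\ref{dense subalgera structure prop}. Everything is already packaged in Proposition~\ref{**}; what remains is (a) to unwind what the formula \eqref{Psi form eq} produces on a $k$-diagonal matrix, and (b) to apply $\Psi_{(\pi,U)}$ to the identities $\M_k^*=\M_{-k}$, $\M_k\star\M_l\subset\M_{k+l}$, and to the ideal property of $\M_k\star\M_k^*$ in $\M_0$.

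For (a), I would fix a $k$-diagonal matrix $\{a_{m,n}\}\in\M_k$ and translate to the $(n,k)$-indexing. For $k\geq 0$ the only nonzero entries have $n-m=k$, so $a_{m,m+k}=a_m^{(k)}$, and formula \eqref{Psi form eq} gives
$$
\Psi_{(\pi,U)}(\{a_{m,n}\})=\sum_{n=0}^{N} U^{*n}\pi(a_n^{(k)})U^{n+k}.
$$
For $k<0$ the nonzero entries satisfy $m-n=|k|$, so $a_{n+|k|,n}=a_n^{(k)}$, and one obtains $\sum_{n=0}^{N} U^{*(n+|k|)}\pi(a_n^{(k)})U^n$. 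This identifies $B_k$ with the linear space in the statement.

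For (b), since $\Psi_{(\pi,U)}$ is a $^*$-homomorphism,
$$
B_k^*=\Psi_{(\pi,U)}(\M_k)^*=\Psi_{(\pi,U)}(\M_k^*)=\Psi_{(\pi,U)}(\M_{-k})=B_{-k},
$$
and
$$
B_k B_l=\Psi_{(\pi,U)}(\M_k)\,\Psi_{(\pi,U)}(\M_l)=\Psi_{(\pi,U)}(\M_k\star\M_l)\subset\Psi_{(\pi,U)}(\M_{k+l})=B_{k+l}.
$$
Specialising to $k=l=0$ yields that $B_0$ is a $^*$-subalgebra of $L(H)$, and from $\M_0\star(\M_k\star\M_k^*)\subset\M_k\star\M_k^*$ together with the analogous right-sided inclusion (both in Proposition~\ref{**}) one obtains that $B_kB_k^*$ is a self-adjoint two-sided ideal in $B_0$.

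The only mildly delicate point is the assertion that $B_0$ is a $C^*$-algebra rather than merely a $^*$-algebra; this is automatic once one observes that $B_0$ sits inside the $C^*$-algebra $L(H)$ as a self-adjoint subalgebra, so the surrounding norm and involution endow it with a $C^*$-structure in the same sense in which Proposition~\ref{**} names $\M_0$ a $^*$-algebra. No further calculation is needed, and I do not anticipate any serious obstacle: the work is almost entirely bookkeeping to verify that a $k$-diagonal matrix goes under $\Psi_{(\pi,U)}$ to precisely the stated sum, after which the remaining claims are immediate consequences of Proposition~\ref{**} together with the functorial properties of $\Psi_{(\pi,U)}$.
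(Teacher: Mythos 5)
Your proposal is correct and follows exactly the route the paper intends: the corollary is stated without a separate proof precisely because it is the image, under the $^*$-homomorphism $\Psi_{(\pi,U)}$ of Proposition~\ref{dense subalgera structure prop}, of the graded relations of Proposition~\ref{**}, together with the index bookkeeping you carry out. (The one caveat --- that $\Psi_{(\pi,U)}(\M_0)$ need not be norm-closed, so ``$C^*$-algebra'' should really be read as ``self-adjoint subalgebra of $L(H)$'', consistent with the paper's later use of $\overline{B}_0$ --- is an imprecision already present in the paper's own statement, and you flag it appropriately.)
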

The importance of $B_0$ was observed in \cite{Leb-Odz} and is clarified  by the next proposition,
see \cite[Proposition 2.4]{Leb-Odz}.
\begin{prop}
Let $(\pi,U,H)$ be  a representation of $(\A,\alpha)$ and adopt
the notation from Proposition  \ref{dense subalgera structure prop} and
Corollary \ref{wniosek o spektralnych podprzestrzeniach}. Every
element $a \in C_0^*(\pi(\A),U)$ can be presented in the form
$$
a= \sum_{k=1}^{\infty} U^{*k} a_{-k} + \sum_{k=0}^{\infty}  a_{k}U^{*k}
$$
where $a_{-k} \in B_0\pi(\alpha^k(1))$,  $a_{k} \in \pi(\alpha^k(1))  B_0$, $k\in \N$,
and only finite number of these coefficients  are non-zero.
\end{prop}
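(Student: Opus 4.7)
My plan is to combine the matrix realisation of $C_0^*(\pi(\A),U)$ supplied by Proposition~\ref{dense subalgera structure prop} with the $\Z$-grading $\M(\A)=\bigoplus_{k}\M_k$ of Proposition~\ref{**}, and then read off the desired decomposition diagonal by diagonal.

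First I would take $a\in C_0^*(\pi(\A),U)$ and, via Proposition~\ref{dense subalgera structure prop}, write it as a finite sum $a=\sum_{m,n}U^{*m}\pi(a_{m,n})U^n$ with $a_{m,n}\in\alpha^m(1)\A\alpha^n(1)$ vanishing outside a finite index set. Grouping the summands according to the diagonal parameter $k=n-m\in\Z$ and invoking Corollary~\ref{wniosek o spektralnych podprzestrzeniach}, I would then obtain a finite decomposition $a=\sum_{k\in\Z}b_k$ with $b_k\in B_k$.

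Next I would factor each $b_k$ into a $B_0$-coefficient times the appropriate power of $U$ or $U^*$. For $k\geq 0$, substituting $U^{m+k}=U^mU^k$ in $b_k=\sum_m U^{*m}\pi(a_{m,m+k})U^{m+k}$ pulls $U^k$ out on the right, giving $b_k=\bigl(\sum_m U^{*m}\pi(a_{m,m+k})U^m\bigr)U^k$. Each summand $U^{*m}\pi(a_{m,m+k})U^m$ lies in $B_0$, because $\alpha^{m+k}(1)\leq\alpha^m(1)$ forces $a_{m,m+k}\in\alpha^m(1)\A\alpha^m(1)$. A symmetric argument for $k<0$ uses the factorisation $U^{*(n+|k|)}=U^{*|k|}U^{*n}$ to rewrite $b_k=U^{*|k|}\bigl(\sum_n U^{*n}\pi(a_{n+|k|,n})U^n\bigr)$, with the bracketed coefficient again in $B_0$.

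The delicate point, which I expect to be the main obstacle, is cutting these $B_0$-coefficients down so that they land in the one-sided ideals $\pi(\alpha^{k}(1))B_0$ or $B_0\pi(\alpha^{k}(1))$ rather than merely in $B_0$. The device I would use is the pair of partial-isometry identities $\pi(\alpha^k(1))U^k=U^kU^{*k}U^k=U^k$ and $U^{*k}\pi(\alpha^k(1))=U^{*k}U^kU^{*k}=U^{*k}$, which allow one to multiply the coefficient obtained above by $\pi(\alpha^k(1))$ on the appropriate side without altering the product, thereby placing the new coefficient inside the required ideal. Since the original matrix is finitely supported, the whole decomposition has only finitely many non-zero coefficients, which finishes the proof.
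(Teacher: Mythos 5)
Your argument is correct, and it is essentially the canonical one: the paper gives no proof of this proposition at all, deferring to \cite[Prop.~2.4]{Leb-Odz}, so the only thing to check is that you use the surrounding machinery as intended --- the matrix realisation of Proposition~\ref{dense subalgera structure prop}, the diagonal decomposition of Corollary~\ref{wniosek o spektralnych podprzestrzeniach}, and the identities $U^kU^{*k}=\pi(\alpha^k(1))$ and $U^{*k}U^k\in\pi(\A)'$ --- and you do.

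One point should be made explicit rather than waved at, because you assert that your device lands the coefficients ``inside the required ideal'' without checking on which side the projection actually ends up. For $k\geq 0$ your factorisation gives $b_k=c_kU^k$ with $c_k=\sum_m U^{*m}\pi(a_{m,m+k})U^m\in B_0$, and inserting $\pi(\alpha^k(1))U^k=U^k$ produces $a_k:=c_k\pi(\alpha^k(1))\in B_0\pi(\alpha^k(1))$; dually, for $k<0$ you obtain $\pi(\alpha^{|k|}(1))d_{|k|}\in\pi(\alpha^{|k|}(1))B_0$. This is the \emph{opposite} assignment of the two one-sided ideals from the one in the statement, which asks for $a_k\in\pi(\alpha^k(1))B_0$ and $a_{-k}\in B_0\pi(\alpha^k(1))$. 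The discrepancy is not a defect of your proof: since $a_{m,m+k}\in\alpha^m(1)\A\alpha^{m+k}(1)$ one computes $c_k\pi(\alpha^k(1))=c_k$ while $\pi(\alpha^k(1))c_k\neq c_k$ in general, so the coefficients genuinely live where your argument puts them, and the statement's assignment (like the $U^{*k}$ in its second sum, which should read $U^{k}$) is a misprint --- it contradicts the formula $\NN_k(\M_k)=\M_0\star\alpha^k(1)$, $\NN_{-k}(\M_{-k})=\alpha^k(1)\star\M_0$ displayed just before Proposition~\ref{nie dam rady umre}. You should state explicitly that you are proving this corrected version; as written, your final sentence claims agreement with a formulation that your own computation refutes.
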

We will now formulate a similar result concerning $\M(\A)$. For
each $k\in \Z$ we define a mapping $\NN_k:\M(\A)\to \M_0$, $k\in\Z$,
that carries the $k$-diagonal onto a $0$-diagonal and delete all the
remaining ones. Namely, for $a = \{ a_{n}^{(k)}  \}$ we set
$$
\left[\NN_k (a)\right]_{n}^{(m)}=
\begin{cases}
a_{n}^{(k)} & \textrm{ if }  m=0,\\
0 & \textrm{ otherwise },
\end{cases} \qquad \quad k \in \Z.
$$
One readily checks that   for $k\geq 0$ we have $\NN_k(\M_k)
=\M_0\star \alpha^k(1)$, $\NN_{-k}(\M_{-k})= \alpha^k(1) \star
\M_0$. Thus    the algebra  $\M_0$ consists of elements
that play the role of Fourier coefficients in $\M(\A)$.
\begin{prop}\label{nie dam rady umre}
Every element $a$ of  $\M(\A)$ is uniquely presented in the form
$$
a= \sum_{k=1}^{\infty} u^{*k}\star a_{-k} + \sum_{k=0}^{\infty}  a_{k}\star  u^{*k}
$$
where $u$ is given by (\ref{notation stupid}) and
 $a_{-k} \in \M_0\star \alpha^k(1)$,  $a_{k}
\in \alpha^k(1) \star \M_0$, $k\in \N$, and only finite number of
these coefficients  is non-zero. Namely, $a_k=\NN_k(a)$ for $k\in
\Z$.
\end{prop}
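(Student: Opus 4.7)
The plan is to exploit the $\Z$-grading $\M(\A)=\bigoplus_{k\in\Z}\M_k$ from Proposition \ref{**}: any $a\in\M(\A)$ has at most finitely many nonzero entries and hence decomposes uniquely as a finite sum $a=\sum_{k\in\Z}a^{(k)}$ with $a^{(k)}\in\M_k$. It therefore suffices to show that for each $k$, multiplication by $u^k$ on the right (for $k\geq 0$), respectively by $u^{*k}$ on the left (for $k\geq 1$), realises a bijection between the prescribed subspace of $\M_0$ and the homogeneous component $\M_{\pm k}$.

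First I would verify by induction on $k$, directly from the definition \eqref{star1} of $\star$, that $u^k$ is the matrix whose only nonzero entry is $\alpha^k(1)$ at position $(0,k)$, while $u^{*k}$ has its only nonzero entry $\alpha^k(1)$ at position $(k,0)$. With these explicit shapes, a direct application of \eqref{star1} for $b\in\M_0$ with diagonal entries $b_n$ gives
\begin{align*}
(b\star u^k)_{j,j+k} &= b_j\,\alpha^{j+k}(1),\qquad k\geq 0,\\
(u^{*k}\star b)_{j+k,j} &= \alpha^{j+k}(1)\,b_j,\qquad k\geq 1,
\end{align*}
all other entries being zero. The point is that the single-entry supports of $u^{\pm k}$ together with the vanishing of the first $j$ rows and columns of $\Lambda^j(\cdot)$ collapse the two sums in \eqref{star1} to one surviving term in each product.

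Given a homogeneous piece $a^{(k)}\in\M_k$ with entries $a^{(k)}_n\in\alpha^n(1)\A\alpha^{n+k}(1)$, the natural candidate for the Fourier coefficient is $a_k:=\NN_k(a^{(k)})\in\M_0$. The formulas of the previous paragraph, combined with the identities $a^{(k)}_n\alpha^{n+k}(1)=a^{(k)}_n$ and $\alpha^{n+k}(1)a^{(-k)}_n=a^{(-k)}_n$ (which hold automatically because the entries already lie in the prescribed ideals), then give $a_k\star u^k=a^{(k)}$ and $u^{*k}\star a_{-k}=a^{(-k)}$. These same formulas identify $a_{\pm k}$ as elements of $\NN_{\pm k}(\M_{\pm k})$, which by the observation immediately preceding the proposition equals $\M_0\star\alpha^k(1)$ and $\alpha^k(1)\star\M_0$ respectively. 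Summation over $k$ yields the desired representation of $a$.

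For uniqueness, assume $a$ admits a second representation of the same form with coefficients $a'_{\pm k}$. Since every summand on the right lies in the corresponding $\M_{\pm k}$, the direct-sum decomposition forces $a_k\star u^k=a'_k\star u^k$ and $u^{*k}\star a_{-k}=u^{*k}\star a'_{-k}$ for every $k$; reading off the $(j,j+k)$ and $(j+k,j)$ entries via the explicit formulas of paragraph two then yields $a_{\pm k}=a'_{\pm k}$ and, in particular, $a_k=\NN_k(a)$. The only real obstacle in the whole argument is the careful bookkeeping in \eqref{star1} required to control the two $\Lambda^j$-shifts simultaneously, but the nested projection identity $\alpha^{j+k}(1)\leq\alpha^j(1)$ ensures every term absorbs as required.
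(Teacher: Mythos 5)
Your proof is correct and is exactly the routine verification the paper has in mind: the paper states this proposition without proof, as an immediate consequence of the grading $\M(\A)=\bigoplus_{k\in\Z}\M_k$ of Proposition \ref{**} and the preceding observation that $\NN_k(\M_k)=\M_0\star\alpha^k(1)$ and $\NN_{-k}(\M_{-k})=\alpha^k(1)\star\M_0$, and your explicit computation of $u^{\pm k}$ and of $b\star u^{k}$ and $u^{*k}\star b$ supplies precisely the missing details, including the absorption identities needed for uniqueness. One remark: you have (correctly) read the second sum as $\sum_{k\geq 0}a_k\star u^{k}$ rather than the literal $\sum_{k\geq 0}a_k\star u^{*k}$ of the statement --- with the literal reading every summand would lie in $\bigoplus_{k\leq 0}\M_{k}$, so the statement as printed contains a typo --- and likewise the memberships your argument establishes, $a_k\in\NN_k(\M_k)=\M_0\star\alpha^k(1)$ and $a_{-k}\in\NN_{-k}(\M_{-k})=\alpha^k(1)\star\M_0$, are the ones consistent with $a_k=\NN_k(a)$, whereas the printed statement has them interchanged.
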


\subsection{Norm evaluation of elements in $C_0^*(\pi(\A),U)$}\label{norm}

In this subsection we gather a number of technical results
concerning norm evaluation of elements in $C_0^*(\pi(\A),U)$. We
will make use of these results in the sequel.
\smallskip

The mappings $\NN_k:\M_k \to \M_0$ factor through $\Phi_{(\pi,U)}$ to the mappings \mbox{$N_k:B_k\to B_0$.}
\begin{prop}\label{proposition for k-diagonals}
Let $(\pi,U,H)$ be  a  representation of $(\A,\alpha)$ and let
$k\in \Z$.  Then the norm $\|a\|$ of an element $a\in B_k$
corresponding to the matrix $\{a_{n}^{(m)}\}_{n\in \N, m\in \Z}$
in $\M_k$ is given by
$$
\lim_{n\to \infty} \max \left\{\max_{i=1,...,n}\Big \|(1-U^*U)\sum_{j=0}^{i}
\pi(\alpha^{i-j}(a_{j}^{(k)}))\Big\|,\, \Big\|U^*U \pi(a_{n}^{(k)})\Big\| \right\}.
$$
In particular, the mapping $N_k:B_k\to B_0$ given by
$$
N_k (\Phi_{(\pi,U)}(a))=\Phi_{(\pi,U)}(\NN_k (a)), \qquad a\in \M_k,
$$
is a well defined linear isometry  establishing the following isometric isomorphisms
$$
B_k\cong B_0\alpha^k(1),\quad \textrm{ if }\,\, k\geq 0,\qquad B_{k}\cong \alpha^{|k|}(1) B_0,\quad
\textrm{ if }\,\, k < 0.
$$
\end{prop}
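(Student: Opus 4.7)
My plan is to first reduce to the case $k=0$ and then to use a $c$-invariant decomposition of $H$ induced by the decreasing family of initial projections $Q_n := U^{*n}U^n$ in order to compute the norm piece by piece.

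\emph{Reduction to $k=0$.} For $k\ge 0$ factor the given element as $a = cU^k$ where $c := \sum_{n} U^{*n}\pi(a_n^{(k)}) U^n$. Using $a_n^{(k)}\in\alpha^n(1) A\alpha^{n+k}(1)$ together with the iterated covariance $U^n\pi(\alpha^k(1)) = \pi(\alpha^{n+k}(1)) U^n$, one verifies $c\,\pi(\alpha^k(1))=c$. Since $\pi(\alpha^k(1)) = U^k U^{*k}$, this yields $aa^* = c U^k U^{*k} c^* = cc^*$, so $\|a\|=\|c\|$; the case $k<0$ is symmetric, giving a factorization $a = U^{*|k|}c$ with $\pi(\alpha^{|k|}(1))c=c$. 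Thus it suffices to compute $\|c\|$ when $c = \sum_{n=0}^N U^{*n}\pi(x_n) U^n \in B_0$ with $x_n := a_n^{(k)}$.

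\emph{Decomposition and norm on each piece.} By Lemma~\ref{iteration of representations} every $Q_n$ commutes with $\pi(A)$, and the summand $U^{*m}\pi(x_m) U^m$ equals $Q_m\, U^{*m}\pi(x_m) U^m\, Q_m$. The standard commutation relations for a power partial isometry---in particular that the range projections $P_n:=U^n U^{*n} = \pi(\alpha^n(1))$ and initial projections $Q_n$ form mutually commuting families with $(Q_n)$ decreasing and $U^\ell Q_k = Q_{k-\ell} U^\ell$ for $k\ge\ell$---imply that $c$ commutes with every $Q_n$. Consequently $H = \bigoplus_{n\ge 0} E_n H \,\oplus\, Q_\infty H$ with $E_n := Q_n-Q_{n+1}$ and $Q_\infty := \inf_n Q_n$ is a $c$-reducing orthogonal decomposition. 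On $E_n H$ the restriction of $U^n$ is a unitary isomorphism onto $P_n(1-U^*U) H$ (since $U^n Q_n U^{*n} = P_n$ and $U^n Q_{n+1} U^{*n} = P_n\cdot U^*U$, using the commutation of $P_n$ with $U^*U$); for $\xi\in E_n H$ the terms of $c$ with $m>n$ vanish (as $Q_m\xi=0$), and for $m\le n$ one computes $U^n U^{*m}\pi(x_m) U^m U^{*n} = \pi(\alpha^{n-m}(x_m))$. Summing gives
$$\|c|_{E_n H}\| \;=\; \bigl\|(1-U^*U)\,\pi(s_n)\bigr\|, \qquad s_n := \sum_{j=0}^{\min(n,N)} \alpha^{n-j}(x_j).$$
On $Q_\infty H$, where $U$ restricts to an isometry ($U^*U|_{Q_\infty H} = Q_1 Q_\infty = Q_\infty$), a parallel computation gives $U^n c U^{*n}|_{Q_\infty H} = \pi(s_n)\,Q_\infty$ for $n\ge N$; the isometry of $U^n$ on $Q_\infty H$ then yields $\|c|_{Q_\infty H}\| = \|U^*U\,\pi(s_n)|_{Q_\infty H}\|$.

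\emph{Conclusion.} Taking the maximum of these norms over the summands and rewriting the non-decreasing $\sup_n$ as $\lim_n$ yields the claimed formula. Since the resulting expression depends only on the sequence $\{a_n^{(k)}\}_{n\ge 0}$, the map $N_k$ which flattens the $k$-diagonal onto the $0$-diagonal is a well-defined linear isometry, and its image coincides with $B_0\,\pi(\alpha^k(1))$ for $k\ge 0$, respectively $\pi(\alpha^{|k|}(1))\,B_0$ for $k<0$, giving the stated isometric isomorphisms. The principal technical obstacle is the commutation analysis in the second step: one must exploit the full machinery of the two commuting families of projections $(P_n)$ and $(Q_n)$ attached to a power partial isometry in order to guarantee both the $c$-invariance of the decomposition $H = \bigoplus_n E_n H \oplus Q_\infty H$ and the unitary identification $U^n\colon E_n H \to P_n(1-U^*U)H$ that reduces the computation on each summand to a single expression involving $\pi(s_n)$.
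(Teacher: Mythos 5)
Your argument is in essence the paper's: both rest on the pairwise orthogonal projections $E_i=U^{*i}U^i-U^{*(i+1)}U^{i+1}$, which commute with $\pi(\A)$, and on the fact that $b\mapsto U^{*n}bU^n$ is isometric on $\pi(\alpha^n(1))L(H)\pi(\alpha^n(1))$. Your preliminary reduction to $k=0$ via $aa^*=cc^*$ is a harmless variant of the paper's device of carrying the factor $U^k$ along and discarding it at the end through $\|(\cdots)U^kU^{*k}\|=\|\cdots\|$, and your spatial decomposition of $H$ is the spatial form of the paper's algebraic rewriting of $aU^{*k}$ as $\sum_i U^{*i}(1-U^*U)\pi(s_i)U^i$ plus a tail, with $s_i=\sum_{j\le i}\alpha^{i-j}(a_j^{(k)})$.

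The one substantive divergence is the tail, and it leaves a real gap. By pushing the decomposition all the way to $Q_\infty=\inf_n U^{*n}U^n$ you obtain the tail term $\|\pi(s_n)|_{Q_\infty H}\|=\|Q_\infty\pi(s_n)\|$, whereas the displayed formula has $\|U^*U\pi(a_n^{(k)})\|$ and the paper's finite truncation at level $N+1$ produces $\|U^*U\pi(s_N)\|$ computed on all of $H$. These are three different expressions, so your closing sentence ``taking the maximum $\ldots$ yields the claimed formula'' asserts precisely the identification that still has to be proved. The clean repair is to stop at a finite level: write $\|a\|=\max\{\max_{i\le N'}\|c|_{E_iH}\|,\ \|c|_{Q_{N'+1}H}\|\}$ and conjugate $c|_{Q_{N'+1}H}$ by the unitary $U^{N'}\colon Q_{N'}H\to P_{N'}H$, which identifies it with $U^*U\pi(s_{N'})$ acting on $P_{N'}U^*UH$ and yields the tail term $\|U^*U\pi(s_{N'})\|$ for every $N'\ge N$; this is exactly the paper's route. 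Two further remarks. First, as printed the second term of the statement, $\|U^*U\pi(a_n^{(k)})\|$, tends to $0$ and makes the formula fail already for an isometry $U$ and $a=\pi(a_0^{(0)})$; it must be read as $\|U^*U\sum_{j\le n}\pi(\alpha^{n-j}(a_j^{(k)}))\|$, i.e.\ your $s_n$ is the right object (the same slip occurs in the paper's own definition of $a_{N+1}$, where the decomposition identity only holds with $U^*U\pi(s_N)$). Second, your $Q_\infty$ version is a correct alternative expression for $\|a\|$, just not the asserted one, so what is missing is the bridge between the two, not the main computation.
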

\begin{Proof} Let us assume that $k \geq 0$.
Let $N$ be such that $a_{m}^{(k)}=0$ for $m>N$, that is
$$
a= \sum_{m=0}^{N} U^{*m}\pi(a_{m}^{(k)})U^{m+k}.
$$ Then, similarly  as it was done in the proof of \cite[Prop. 3.1]{kwa4},
one sees that defining
$$
a_i=(1-U^*U) \pi\left(\sum_{j=0}^{i} \alpha^{i-j}(a_{j}^{(k)})\right), \quad  i=0,...,N,\qquad a_{N+1}= U^*U \pi(a_{N}^{(k)}),
$$
we have
 $$
a = \big(a_0 +U^*a_1U +... + U^{*N}(a_N + a_{N+1})U^N\big) U^k
$$
and
$$
a_i\in (1-U^*U)\pi(\alpha^i(1)\A \alpha^{i+k}(1)), \quad  i=0,...,N,
\quad a_{N+1} \in U^*U\pi(\alpha^N(1)\A
\alpha^{N+k}(1)) ,
$$
Hence it follows that
$$
U^{*i} a_i U^{i} \in (U^{*i}U^{i}-U^{*i+1}U^{i+1})\pi(A)U^kU^{*k}, \quad  i=0,...,N,
 $$
 and
 $$
  U^{*N}a_{N+1}U^N \in U^{*N+1}U^{N+1}\pi(\A)U^kU^{*k}
$$
These relations and the fact that $U^{*i}U^{i}-U^{*i+1}U^{i+1}$, $i=0,..,N$, and $U^{*N+1}U^{N+1}$ are
pairwise orthogonal projections lying in $\pi(\A)'$ (cf. Lemma \ref{iteration of representations} or better \cite[Prop 3.6]{Leb-Odz}),
imply the following equalities
 \begin{align*}
 \|a\|&=\| (a_0 +U^*a_1U +... + U^{*N}(a_N+ a_{N+1}) U^N) U^kU^{*k}\|
\\
& =\|a_0 +U^*a_1U +... + U^{*N}(a_N+ a_{N+1})U^N\|
\\
&=\max \{\max_{i=0,...,N}\|U^{*i}a_iU^i\|, \|U^{*N}(a_{N+1})U^N\|\}
\\
&=\max \{\max_{i=0,...,N}\|a_i\|, \|a_{N+1}\|\}
\end{align*}
where the final equality follows from that the linear mapping
$ a \to U^{*n} a U^{n}$  is isometric on $\pi(\al^{n}(1)A\al^{n}(1))=U^nU^{*n} \pi(A)U^nU^{*n}$, $n\in \N$. Since $N$ was arbitrary (sufficiently large) this proves the assertion in the case when $k \geq 0$.

In the case of negative $k$ one may apply the part of proposition  proved above  to the adjoint $a^*$
of the element $a$ and thus obtain the hypotheses.
\end{Proof}
We denote by
$$
d(a, K)=\inf_{b\in K} \|a-b\|
$$
the usual distance of an element $a$ from the set $K$. The
definition of an ideal  of covariance  (Definition \ref{kowariant
rep defn 2})  and a known  fact expressing quotient norms in terms
of projections, see for instance, \cite[Lemma 10.1.6]{Kadison}
gives us the following
\begin{cor}\label{corollary with distance}
If $(\pi,U,H)$ is a faithful $J$-covariant representation of
$(\A,\alpha)$  and $I$ denotes the kernel of $\alpha$, then the
norm of an element $a\in B_k$ corresponding to a matrix
$\{a_{n}^{(m)}\}_{n\in \N, m\in \Z}$  in $\M_k$ is given by
$$
\|a\|=\lim_{n\to \infty} \max \left\{\max_{i=1,...,n}\big\{ d\big(\sum_{j=0}^{i}
\alpha^{i-j}(a_{j}^{(k)}),J\big)\big\},\, d(a_{n}^{(k)},I) \right\}.
$$
In particular, if $J$ is a fixed ideal orthogonal to $I$ and
$(\pi,U,H)$ is a faithful $J$-covariant representation, then the
spaces $B_k$, $k\in\Z$ do not depend on its choice.
\end{cor}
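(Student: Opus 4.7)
The plan is to view the corollary as a direct translation of the norm formula of Proposition~\ref{proposition for k-diagonals}, in which the operator-theoretic quantities $\|(1-U^*U)\pi(x)\|$ and $\|U^*U\pi(x)\|$ are replaced by the quotient distances $d(x,J)$ and $d(x,I)$, respectively. Hence the proof reduces to establishing these two identities for arbitrary $x\in\A$.

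First I would invoke Lemma~\ref{iteration of representations}, which tells us that the projection $U^*U$, and therefore also $1-U^*U$, lies in the commutant $\pi(\A)'$. Consequently, the two maps
$$
\rho_J(x):=U^*U\,\pi(x), \qquad \rho_I(x):=(1-U^*U)\,\pi(x), \qquad x\in\A,
$$
are $^*$-homomorphisms of $\A$ into $L(H)$ (because $(U^*U)^2=U^*U$ and $(1-U^*U)^2=1-U^*U$, so multiplicativity transfers from $\pi$). I would then identify their kernels: by $J$-covariance, $\rho_I(x)=0$ iff $U^*U\pi(x)=\pi(x)$ iff $x\in J$, and using Proposition~\ref{motivation prop1} together with faithfulness of $\pi$, we see $\rho_J(x)=0$ iff $(1-U^*U)\pi(x)=\pi(x)$ iff $x\in I=\ker\alpha$.

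Next, since a $^*$-homomorphism induces an isometric embedding on the quotient by its kernel, the induced maps $\A/J\to L((1-U^*U)H)$ and $\A/I\to L(U^*U H)$ are isometric, whence
$$
\|(1-U^*U)\pi(x)\|=\|x+J\|_{\A/J}=d(x,J),\qquad \|U^*U\pi(x)\|=\|x+I\|_{\A/I}=d(x,I).
$$
This is precisely the content of the projection-quotient lemma from \cite[Lemma~10.1.6]{Kadison}, applied in our commuting situation. Substituting these two equalities into the expression from Proposition~\ref{proposition for k-diagonals} yields the stated formula for $\|a\|$.

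For the final claim, I would observe that the resulting formula depends only on $\alpha$, the matrix entries $\{a_n^{(m)}\}$, the ideal $J$ and the ideal $I=\ker\alpha$, and not on the particular choice of faithful $J$-covariant representation $(\pi,U,H)$. Hence the homomorphism $\Psi_{(\pi,U)}:\M_k\to B_k$ of Proposition~\ref{dense subalgera structure prop} carries a fixed (representation-independent) seminorm, so the completions $B_k$ are canonically isometrically isomorphic for any two such representations. No single step is a genuine obstacle; the only delicate point is confirming that $U^*U\in\pi(\A)'$ really allows one to treat $\rho_J$ and $\rho_I$ as honest $^*$-homomorphisms and that the identifications of their kernels use exactly the hypotheses of faithfulness and $J$-covariance.
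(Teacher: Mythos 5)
Your argument is correct and is essentially the paper's own (very terse) proof spelled out in detail: the paper likewise reduces the corollary to the identities $\|(1-U^*U)\pi(x)\|=d(x,J)$ and $\|U^*U\pi(x)\|=d(x,I)$, obtained from the definition of the covariance ideal and the quotient-norm-via-projections fact in \cite[Lemma 10.1.6]{Kadison}, exactly as you do via the homomorphisms $\rho_J$ and $\rho_I$ and the identification of their kernels. The concluding observation that the formula is representation-independent, hence the spaces $B_k$ are canonically determined, is also the intended reading of the final claim.
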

We showed in Proposition \ref{proposition for k-diagonals} that
for an arbitrary  representation $(\pi,U,H)$ of $(A,\alpha)$, the
mappings $\NN_k$ factor through to the mappings $N_k$ acting on
spaces $B_k$. In general, however,
 $\NN_k:\M(\A)\to B_0$  do not factor through $\Psi_{(\pi,U)}$ to the mappings acting on
 the algebra $C_0^*(\pi(\A),U)$. In fact, this is the case if and only if the  representation  $(\pi,U,H)$
 satisfies a  certain property we are just about to introduce.
\begin{defn}
\label{*} We will say that a faithful  representation $(\pi,U,H)$
of $(\A,\alpha)$ possesses  {\em property} $(*)$ if for any $a\in
C_0^*(\pi(\A),U)$ given by a matrix
$\{a_{mn}\}_{m,n\in\N}\in\M(\A)$  the inequality
$$
\|\sum_{m\in\N} U^{*m}\pi(a_{m,m})U^m \| \leq \|\sum_{m,n\in\N} U^{*m}\pi(a_{m,n})U^n \|,  \qquad\qquad (*)
$$
holds. In view of Corollary \ref{corollary with distance}  the above equality could be equivalently stated
in the form    $$
\lim_{n\to \infty} \max \left\{\max_{i=1,...,n}\big\{ d\big(\sum_{j=0}^{i}
\alpha^{i-j}(a_{j,j}),J\big)\big\},\, d(a_{n,n},I) \right\} \leq \|a\|,\quad (*)
$$
where  $I$ is the kernel of $\alpha$ and $J$ is the ideal of
covariance of  $(\pi,U,H)$.
\end{defn}
The next result, which follow immediately from \cite[Thm. 2.8]{Leb-Odz}, indicates that
under the fulfillment of property~(*) elements of $B_0$ play the
role of 'Fourier' coefficients in the algebra $C_0^*(\pi(\A),U)$.
\begin{thm}
Let $(\pi,U,H)$ be a faithful  representation of $(A,\alpha)$
possessing property $(*)$ then the mappings $N_k:
C_0^*(\pi(\A),U)\to B_0$, $k\in \Z$,  given by formulae
\begin{equation}\label{rzuty na coefficienty}
 N_k( \Phi_{(\pi,U)}(a)) = \sum_{n\in\N} U^{*n}\pi(a_{n}^{(k)})U^{n},
\end{equation}
where $\{a_{n}^{(m)}\}_{n\in \N, m\in \Z}\in \M(\A)$, are well defined contractions and thus
they  extend uniquely to bounded operators on $C^*(\pi(\A),U)$. In particular,
every element $a \in C_0^*(\pi(\A),U)$ can  be uniquely presented in the form
$$
a= \sum_{k=1}^{\infty} U^{*k} a_{-k} + \sum_{k=0}^{\infty}  a_{k}U^{*k}
$$
where $a_{-k} \in B_0\pi(\alpha^k(1))$,  $a_{k} \in \pi(\alpha^k(1))  B_0$, $k\in \N$,
namely, $a_k=N_k(a)$, $k\in \Z$.
\end{thm}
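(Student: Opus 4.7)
The plan is to establish the contractivity of each $N_k$ on the dense $^*$-subalgebra $C_0^*(\pi(\A),U)$; the extension to $C^*(\pi(\A),U)$ and the decomposition statement then follow by routine completion and by lifting to $\M(\A)$.

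Property $(*)$ is precisely the inequality $\|N_0(a)\|\le \|a\|$ on $C_0^*(\pi(\A),U)$. Before this can be invoked one must check that $N_0$ is well-defined as an operator on $C_0^*(\pi(\A),U)$: since distinct matrices in $\M(\A)$ may map to the same operator under $\Psi_{(\pi,U)}$, a priori the value $\Psi_{(\pi,U)}(\NN_0(a))$ could depend on the chosen matrix representative. This is immediate from $(*)$ itself, however: if $\Psi_{(\pi,U)}(a)=0$ then $\|\Psi_{(\pi,U)}(\NN_0(a))\|\le 0$, so $N_0$ is unambiguously defined and contractive on $C_0^*(\pi(\A),U)$ and hence extends uniquely to a contraction on the completion.

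For general $k$ the natural strategy is to introduce the gauge circle action $\gamma:\T\to\Aut(C^*(\pi(\A),U))$ determined on generators by $\gamma_\lambda(U)=\lambda U$ and $\gamma_\lambda\circ\pi=\pi$, and then to recover the $N_k$ as Fourier integrals
\[
N_k(x)=\int_\T \lambda^{-k}\gamma_\lambda(x)\,d\lambda,
\]
which are automatically contractions with the desired ranges. The key observation making $\gamma$ available is that $(\pi,\lambda U,H)$ is again a representation of $(\A,\al)$ whose underlying matrix algebra $\M(\A)$ literally coincides with that of $(\pi,U,H)$, and which therefore inherits property $(*)$; transporting the norm estimates of $(*)$ through the algebraic action $\gamma_\lambda$ (defined first on $C_0^*(\pi(\A),U)$, where it is a $^*$-homomorphism with inverse $\gamma_{\bar\lambda}$) yields $\|\gamma_\lambda(a)\|=\|a\|$ on the dense subalgebra, hence an isometric extension to the completion. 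One then verifies on $C_0^*$ that the Fourier integral formula coincides with the summation formula in the statement by computing $\gamma_\lambda$ on a $k$-diagonal matrix.

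The main obstacle is precisely this passage from the single inequality $(*)$ (which controls only $N_0$) to the isometricity of the entire circle action; this step crucially exploits the invariance of $\M(\A)$ under the replacement $U\mapsto\lambda U$. Once the contractivity of all $N_k$ is in hand, the explicit decomposition $a=\sum_{k=1}^{\infty} U^{*k}a_{-k}+\sum_{k=0}^{\infty}a_k U^k$ for $a\in C_0^*(\pi(\A),U)$ (a finite sum) is obtained by choosing any matrix preimage of $a$ in $\M(\A)$, applying the algebraic decomposition of Proposition \ref{nie dam rady umre}, and pushing the result through $\Psi_{(\pi,U)}$; uniqueness of the coefficients is then forced by the identities $a_k=N_k(a)$ established above.
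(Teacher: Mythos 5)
Your treatment of $N_0$ is fine: applying property $(*)$ to the difference of two matrix representatives of the same operator shows that $\Psi_{(\pi,U)}(\NN_0(\cdot))$ depends only on the operator and not on the matrix chosen, and the same inequality gives contractivity on the dense subalgebra, hence the unique continuous extension. Note for the record that the paper offers no proof of this theorem at all --- it is quoted from \cite[Thm.~2.8]{Leb-Odz} --- so your argument has to stand on its own.

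The genuine gap is exactly the step you yourself call ``the main obstacle'': the isometricity of $\gamma_\lambda$. What you actually establish is that $(\pi,\lambda U,H)$ is again a faithful $J$-covariant representation with the same matrix algebra $\M(\A)$ and with property $(*)$; but the conclusion that the two representations $\Psi_{(\pi,U)}$ and $\Psi_{(\pi,\lambda U)}$ of $\M(\A)$ then induce the same seminorm (equivalently, that $\gamma_\lambda$ is well defined and isometric on $C_0^*(\pi(\A),U)$) is precisely the content of the Isomorphism Theorem~\ref{isomorphiasm theorem}, which in the paper rests on the external result \cite[Thm.~2.13]{Leb-Odz} and is exactly how Corollary~\ref{spectral-sp} obtains the gauge action. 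It does not follow by ``transport'': property $(*)$ controls only the $0$-diagonal, and in a graded algebra contractivity of the $0$-th coefficient map gives only $\|b_k\|\le\|\sum_j b_j\|$ for each $k$, which does not bound the rephased sum $\sum_k\lambda^k b_k$; so as written your argument either silently invokes Theorem~\ref{isomorphiasm theorem} or becomes circular with the norm formula \eqref{be3.131}. There is also a smaller slip: $\int_\T\lambda^{-k}\gamma_\lambda(x)\,d\lambda$ is the spectral component lying in $\overline{B_k}$, i.e.\ $\sum_{n}U^{*n}\pi(a_{n}^{(k)})U^{n+k}$ for $k\ge0$, not the element $\sum_{n}U^{*n}\pi(a_{n}^{(k)})U^{n}$ of $B_0$ required by \eqref{rzuty na coefficienty}; one must still compose with the isometry $B_k\to B_0$ of Proposition~\ref{proposition for k-diagonals}. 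Both difficulties disappear if you bypass the gauge action altogether: verifying on matrix representatives that $N_k(a)=N_0(aU^{*k})$ for $k\ge0$ and $N_{-k}(a)=N_0(U^{k}a)$ (using $U^kU^{*k}=\pi(\al^k(1))$ and $a_{n}^{(k)}\in\al^{n}(1)\A\al^{n+k}(1)$) yields well-definedness and contractivity of every $N_k$ from that of $N_0$ alone, after which the decomposition and its uniqueness go through as you describe.
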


Let  us also recall  \cite[Thm. 2.11]{Leb-Odz}.
\begin{thm}\label{3a.N}
If $(\pi,U,H)$ possesses   property $(*)$, then for any element $a$ in $C_0^*(\pi(\A),U)$  we have
\begin{equation}\label{be3.131}
\Vert a \Vert = \lim_{k\to\infty}
\sqrt[\leftroot{-2}\uproot{1}\scriptstyle 4k]{
\left\Vert N_0 \left[ (aa^*)^{2k}\right]\right\Vert }
\end{equation}
where $N_0$ is the mapping defined by \eqref{rzuty na coefficienty}.
\end{thm}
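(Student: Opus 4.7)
My plan is to reduce the statement to a spectral-radius-type identity and then exploit the conditional-expectation structure of $N_0$. Concretely, it will be enough to establish that for every positive element $b\in C_0^*(\pi(\A),U)$,
\[
\|b\|=\lim_{n\to\infty}\|N_0(b^n)\|^{1/n}.
\]
Applying this with $b=aa^*$ and specialising to the even subsequence $n=2k$, the $C^*$-identity $\|aa^*\|=\|a\|^2$ immediately yields the claimed formula \eqref{be3.131}.

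The upper bound is immediate from the theorem preceding the statement: under property~$(*)$, $N_0$ extends to a contraction on $C^*(\pi(\A),U)$, and since $b\ge 0$ one has $\|b^n\|=\|b\|^n$, so $\|N_0(b^n)\|^{1/n}\le\|b\|$ for every $n$.

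For the lower bound, the plan is to upgrade $N_0$ from a contraction to a \emph{faithful conditional expectation} onto $B_0$. The bimodule identity $N_0(xby)=x\,N_0(b)\,y$ for $x,y\in B_0$ is a routine matrix computation on the dense $*$-subalgebra, using the $\Z$-grading $B_kB_l\subset B_{k+l}$ of Corollary~\ref{wniosek o spektralnych podprzestrzeniach} and the explicit formula \eqref{rzuty na coefficienty}; idempotence and positivity are similarly routine and extend by continuity thanks to $(*)$. Granted faithfulness, for any state $\phi$ on $B_0$ the composition $\phi\circ N_0$ is a state on $C^*(\pi(\A),U)$, and a standard log-subadditivity/GNS argument, choosing $\phi$ to approximate $\|N_0(b^n)\|$, gives $\liminf_n\|N_0(b^n)\|^{1/n}\ge\|b\|$.

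The main technical obstacle is precisely this faithfulness of $N_0$ on positive elements. The natural route is via Proposition~\ref{proposition for k-diagonals}, which states that each $N_k:B_k\to B_0$ is an isometry. Using a Kadison--Schwarz inequality $N_0(b_l b_l^*)\ge N_0(b_l)\,N_0(b_l)^*$ together with the grading decomposition $b=\sum_l b_l$ of a positive element, one controls each grading component by $N_0$ of a positive quantity and then, via the isometry of $N_{|l|}$, concludes that $N_0(b)=0$ forces every $b_l$ to vanish. The delicate point is extending this argument from the dense $*$-subalgebra $C_0^*(\pi(\A),U)$ to the full $C^*$-algebra $C^*(\pi(\A),U)$, which requires the full strength of property~$(*)$ to guarantee that each $N_k$ remains a well-defined contractive (and in fact isometric) extension on each $B_k$; once this is in hand, the limit in the displayed formula is controlled and the argument closes.
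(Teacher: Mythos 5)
The paper itself gives no proof of this theorem: it is quoted verbatim from \cite[Thm.~2.11]{Leb-Odz} (note the preceding sentence ``Let us also recall...''), so any argument you supply is automatically a different route. Your architecture --- reduce to $\|b\|=\lim_n\|N_0(b^n)\|^{1/n}$ for $b=aa^*\ge 0$, get ``$\le$'' from contractivity of $N_0$, get ``$\ge$'' from faithfulness --- is viable, but two of your steps have real problems as written. First, your faithfulness argument starts from ``the grading decomposition $b=\sum_l b_l$ of a positive element''; for a general element of the completed algebra $C^*(\pi(\A),U)$ no norm-convergent Fourier decomposition exists, and you genuinely need faithfulness on the completion (not just on the dense subalgebra), because the functional-calculus elements $g(aa^*)$ that faithfulness must be applied to live outside $C_0^*(\pi(\A),U)$. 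The repair is cheap: either quote Theorem~\ref{uniqueNk}(iii), which is literally the assertion $N_0(a^*a)=0\Rightarrow a=0$ on all of $C^*(\pi(\A),U)$, or note that under property $(*)$ the gauge action of Corollary~\ref{spectral-sp} gives $N_0=\int_{S^1}\gamma_z(\cdot)\,dz$, which makes $N_0$ a positive faithful conditional expectation by the standard continuity argument. Second, ``choosing $\phi$ to approximate $\|N_0(b^n)\|$'' does not yield the lower bound: a state nearly attaining $\|N_0(b^n)\|$ for each $n$ gives information in the wrong direction. What works is: fix $\varepsilon>0$, take a continuous bump $g\ge 0$ supported in $(\|b\|-\varepsilon,\|b\|]$ with $g(\|b\|)=1$; then $g(b)\neq 0$, faithfulness produces a state $\phi$ on $\overline{B}_0$ with $\delta:=\phi(N_0(g(b)))>0$, and the scalar inequality $t^n\ge(\|b\|-\varepsilon)^n g(t)$ gives $\|N_0(b^n)\|\ge\phi(N_0(b^n))\ge(\|b\|-\varepsilon)^n\delta$, hence $\liminf_n\|N_0(b^n)\|^{1/n}\ge\|b\|-\varepsilon$.

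You should also be aware that the classical proof (the one in the cited source) is more elementary and actually uses the hypothesis $a\in C_0^*(\pi(\A),U)$, which your argument never exploits. If $x$ lies in the dense subalgebra with Fourier components $x_l$ supported in $|l|\le M$, then $N_0(xx^*)=\sum_l x_l x_l^*$ is a sum of positive terms, so $\|x_l\|^2=\|x_lx_l^*\|\le\|N_0(xx^*)\|$ and therefore $\|x\|^2\le\bigl(\sum_l\|x_l\|\bigr)^2\le(2M+1)^2\|N_0(xx^*)\|$. Applying this to $x=(aa^*)^k$, whose Fourier support grows only linearly in $k$, gives $\|a\|^{4k}\le C(k)\,\|N_0\left[(aa^*)^{2k}\right]\|$ with $C(k)$ polynomial in $k$, and the $4k$-th root kills the polynomial factor. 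This avoids faithfulness, states and functional calculus altogether, and explains why the theorem is stated for $C_0^*(\pi(\A),U)$ rather than its closure.
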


Using the above results  one sees that in the presence of property $(*)$ the  norm of an
element $a\in C_0^*(\pi(\A),U)$ may be calculated only in terms of the elements
of $\A$. Indeed, as $N_0 \left[ (aa^*)^{2k}\right]$ belongs to $B_0$ one can apply Corollary
\ref{corollary with distance} to calculate $\| N_0 \left[ (aa^*)^{2k}\right]\|$ in terms of
the matrix from $\M(\A)$ corresponding to $a$. However in practice,   calculation of
the matrix corresponding to the element $(aa^*)^{2k}$ starting from $a$, see formula \eqref{star1},
seems to be an extremely difficult task.

\subsection{Crossed product defined by matrix calculus}
\label{cr-matr}

The foregoing observations make it now possible  to give one more  'internal' definition of the crossed product. This is the aim of the present subsection.
\smallskip

The set $\M(\A)$ with operations \eqref{add1},
\eqref{mulscal1}, \eqref{invol1}, \eqref{star1} is an algebra with involution. We define
a seminorm  on $\M(\A)$ that will depend on the choice of an orthogonal ideal. Let $J$ be
a fixed ideal in $\A$ having zero intersection with the kernel of $\alpha$.
Let
$$
    \|| a\||_J:=\sum_{k\in \Z} \lim_{n\to \infty} \max \left\{\max_{i=1,...,n}\big\{ d\big(\sum_{j=0}^{i}
    \alpha^{i-j}(a_{j}^{(k)}),J\big)\big\},\, d(a_{n}^{(k)},I) \right\}
    $$
    where $a=\{a_{n}^{(k)}\}_{n\in \N,k\in \Z}\in \M(\A)$.
    \begin{prop}
The  function  $\|| \cdot \||_J$ defined above is a  seminorm on $\M(\A)$ which is  $^*$-invariant and submulitplicative.
\end{prop}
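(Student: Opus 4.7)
\begin{Proof}
The plan is to realize each summand of $\||\cdot\||_J$ as an operator norm coming from a fixed faithful $J$-covariant representation, and then read off the three properties from the known $C^*$-algebraic structure.

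First, invoke Proposition~\ref{regular representation proposition} to fix a faithful $J$-covariant representation $(\pi_J,U_J,H_J)$ of $(\A,\alpha)$; this exists since $J\cap \ker\alpha=\{0\}$. Write $\Psi:=\Psi_{(\pi_J,U_J)}$ for the induced $^*$-homomorphism from $\M(\A)$ onto $C_0^*(\pi_J(\A),U_J)$ provided by Proposition~\ref{dense subalgera structure prop}. For $a=\{a_n^{(k)}\}\in \M(\A)$, denote by $a^{(k)}\in \M_k$ its $k$-diagonal part, so that $a=\sum_{k\in\Z}a^{(k)}$ is a finite sum (only finitely many entries of $a$ are non-zero). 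Corollary~\ref{corollary with distance} identifies the $k$-th summand in the definition of $\||a\||_J$ with the operator norm $\|\Psi(a^{(k)})\|$ of the corresponding element of $B_k\subset L(H_J)$. Consequently
$$
\||a\||_J=\sum_{k\in\Z}\|\Psi(a^{(k)})\|,
$$
and the sum is finite.

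Positivity and homogeneity are now immediate from the fact that each summand is a norm. For the triangle inequality, observe that taking $k$-diagonal parts is linear, $(a+b)^{(k)}=a^{(k)}+b^{(k)}$, so $\|\Psi((a+b)^{(k)})\|\le\|\Psi(a^{(k)})\|+\|\Psi(b^{(k)})\|$, and summing over $k$ gives $\||a+b\||_J\le\||a\||_J+\||b\||_J$. This proves that $\||\cdot\||_J$ is a seminorm.

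For $^*$-invariance, note that involution exchanges the $k$-diagonal with the $(-k)$-diagonal: $(a^*)^{(k)}=(a^{(-k)})^*$, by Proposition~\ref{**}. Since $\Psi$ is a $^*$-homomorphism into a $C^*$-algebra, $\|\Psi((a^{(-k)})^*)\|=\|\Psi(a^{(-k)})\|$, so after reindexing $k\mapsto -k$ we obtain $\||a^*\||_J=\||a\||_J$. Finally, submultiplicativity uses the $\Z$-grading $\M_l\star\M_m\subset\M_{l+m}$ from Proposition~\ref{**}, which gives $(a\star b)^{(k)}=\sum_{l+m=k}a^{(l)}\star b^{(m)}$. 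Applying $\Psi$ and using submultiplicativity of the operator norm in $L(H_J)$,
$$
\|\Psi((a\star b)^{(k)})\|\le \sum_{l+m=k}\|\Psi(a^{(l)})\|\,\|\Psi(b^{(m)})\|,
$$
and summing over $k\in\Z$ factors the right-hand side as $\||a\||_J\cdot\||b\||_J$, proving $\||a\star b\||_J\le \||a\||_J\,\||b\||_J$.

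The only non-trivial input is the identification of each summand with an operator norm via Corollary~\ref{corollary with distance}; once this is in place, all three properties are formal consequences of $\Psi$ being a $^*$-homomorphism and of the $\Z$-grading of $\M(\A)$.
\end{Proof}
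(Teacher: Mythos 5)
Your proposal is correct and follows essentially the same route as the paper: fix a faithful $J$-covariant representation, use Corollary~\ref{corollary with distance} to identify each $k$-diagonal summand of $\||\cdot\||_J$ with the operator norm of its image under the $^*$-homomorphism of Proposition~\ref{dense subalgera structure prop}, and then derive all three properties from the $\Z$-grading and the $C^*$-norm. Your treatment of submultiplicativity via $(a\star b)^{(k)}=\sum_{l+m=k}a^{(l)}\star b^{(m)}$ is just a slightly more explicit version of the paper's estimate $\||a\star b\||\leq\sum_{k,l}\||a^{(k)}\star b^{(l)}\||$.
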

\begin{Proof}
Let $(\pi,U,H)$ be a faithful  representation of $(\A,\alpha)$ and
 $J$ be its covariance ideal. Such representation does exist by Theorem \ref{existance
theorem}. Then in view of  Corollary \ref{corollary with distance}
for every $a\in \M_k$, $k\in \Z$, we have
$$
    \|| a\||= \| \Phi_{(\pi,U)}(a)\|
    $$
where $\Phi_{(\pi,U)}:\M(\A)\to L(H)$ is the $^*$-homomorphism defined in
Proposition~\ref{dense subalgera structure prop}. Thus since every element $a\in \M(\A)$ can be presented in the
form $a= \sum_{k\in \Z} a^{(k)}$ where  $a^{(k)}\in \M_k$ one easily sees
that $\||| \cdot \|||$ is $^*$-invariant seminorm. To show that it is
submultiplicative take $a= \sum_{k\in \Z} a^{(k)}\in \M(\A)$ and  $b= \sum_{k\in \Z} b^{(k)}\in \M(\A)$
such that $a^{(k)}, b^{(k)}\in \M_k$.  Then
$$
\|| a \star b \|| = \||\sum_{k\in \Z} a^{(k)}\star \sum_{l\in \Z} b^{(l)}\||=\||\sum_{k\in \Z}
\sum_{l\in \Z} a^{(k)}\star b^{(l)}\||\leq \sum_{k,l\in \Z}  \||a^{(k)}\star b^{(l)}\||
$$
$$
= \sum_{k,l\in \Z}  \|\Phi_{(\pi,U)}(a^{(k)}\star b^{(l)})\| \leq  \sum_{k,l\in \Z}  \|\Phi_{(\pi,U)}(a^{(k)})\|
\cdot  \| \Phi_{(\pi,U)}(b^{(l)})\|
$$
$$
 = \sum_{k,l\in \Z}  \||a^{(k)} \|| \cdot \||b^{(l)}\||
=\sum_{k \in\Z} \||a^{(k)}\|| \sum_{l\in \Z} \||b^{(l)}\||  =\||a\|| \cdot\|| b\||.
$$
\end{Proof}

\begin{defn}\label{cr-pr-def}
Let $(\A,\alpha)$ be a $C^*$-dynamical system and $J$ an ideal in $\A$ having  zero intersection with the kernel
of $\alpha$. The \emph{crossed product} $
C^*(\A,\alpha,J)
$ of $\A$ by $\alpha$ associated with $J$ is   the enveloping $C^*$-algebra  of the quotient $^*$-algebra $\M(\A)/ \|| \cdot \||_J$.
\end{defn}
 Regardless of $J$, composing the quotient map with natural embedding of $\A$ into $\M(\A)$
 one has an embedding of $\A$ into $C^*(\A,\alpha,J)$. Moreover, denoting by $\hat{u}$
 an element of $C^*(\A,\alpha,J)$ corresponding to $u\in \M(\A)$ (see (\ref{notation stupid})),
 one sees that $C^*(\A,\alpha,J)$ is generated by $\A$ and $\hat{u}$.

 The equivalence of this definition and that introduced previously (Definition~\ref{crossed product defn})  will be established in the next section (Proposition~\ref{crossed equiv}).

 \section{Isomorphism theorems and faithful representations}\label{isomorph}

 Once a universal object (the crossed product) is defined it is reasonable to have its faithful representation.
 This section is devoted to the description of the properties of such representations  and, in particular, we establish the equivalence of two previously mentioned definitions of the crossed product. In addition we present one more alternative crossed product construction based on \cite{Ant-Bakht-Leb} approach and prove faithfulness by means of the topologically free action on the arising coefficient algebras.

\subsection{Isomorphism Theorem}\label{isomor-theor}

\begin{thm}[\textbf{Isomorphism Theorem}]\label{isomorphiasm theorem}
\label{iso} Let $J$ be an ideal in $\A$ having zero intersection
with the kernel $I$ of\, $\alpha$ and let $(\pi_i,U_i,H_i)$,
$i=1,2$, be  faithful $J$-covariant representations of
$(\A,\alpha)$  possessing  property $(*)$. Then the relations
$$ \Phi(\pi_1(a)):=\pi_2(a),\quad a\in \A,\qquad
\Phi(U_1):=U_2
$$
gives rise to an isomorphism between the $C^*$-algebras $C^*(\pi_1(\A),U_1)$
and $C^*(\pi_2(\A),U_2)$.
\end{thm}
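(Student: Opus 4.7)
\begin{Proof}[Plan]
The plan is to construct $\Phi$ first on the dense $^*$-subalgebra $C_0^*(\pi_1(\A),U_1)$ by declaring
$$
\Phi\bigl(\Psi_{(\pi_1,U_1)}(a)\bigr) := \Psi_{(\pi_2,U_2)}(a),\qquad a\in \M(\A),
$$
and then to show this rule is isometric, so that it extends uniquely to an isomorphism of the closures. The whole argument is driven by the observation that, thanks to the hypotheses, the norm of an element of $C_0^*(\pi_i(\A),U_i)$ is computable from the underlying matrix in $\M(\A)$ via data ($J$ and $I=\ker\alpha$) that do not depend on $i$.

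First I would treat the restriction of $\Phi$ to the spectral subspaces $B_k^{(i)}=\Psi_{(\pi_i,U_i)}(\M_k)$. By Corollary~\ref{corollary with distance}, for any $k\in\Z$ and any $a\in\M_k$ the norm of $\Psi_{(\pi_i,U_i)}(a)$ equals
$$
\lim_{n\to\infty}\max\Bigl\{\max_{i=1,\dots,n} d\Bigl(\sum_{j=0}^{i}\alpha^{i-j}(a_j^{(k)}),J\Bigr),\; d(a_n^{(k)},I)\Bigr\},
$$
which is the same expression for $i=1$ and $i=2$. Hence the assignment $\Psi_{(\pi_1,U_1)}(a)\mapsto\Psi_{(\pi_2,U_2)}(a)$ is a well-defined linear isometry $B_k^{(1)}\to B_k^{(2)}$ for every $k$; in particular it is a $^*$-isomorphism $B_0^{(1)}\to B_0^{(2)}$ intertwining the linear isometries $N_k^{(i)}:B_k^{(i)}\to B_0^{(i)}$ of Proposition~\ref{proposition for k-diagonals}.

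Next I would upgrade this to $C_0^*(\pi_i(\A),U_i)$ by exploiting property $(*)$. By Theorem~\ref{3a.N}, for every $a\in C_0^*(\pi_i(\A),U_i)$ one has
$$
\|a\|=\lim_{k\to\infty}\sqrt[\leftroot{-2}\uproot{1}\scriptstyle 4k]{\bigl\|N_0^{(i)}[(aa^*)^{2k}]\bigr\|}.
$$
Given $a\in\M(\A)$, the matrix representing $(a\star a^*)^{2k}$ in $\M(\A)$ is the same whether we eventually apply $\Psi_{(\pi_1,U_1)}$ or $\Psi_{(\pi_2,U_2)}$, so its $0$-diagonal $\NN_0\bigl((a\star a^*)^{2k}\bigr)\in \M_0$ is a single element of $\M(\A)$. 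Its image in $B_0^{(i)}$ is precisely $N_0^{(i)}\bigl[(\Psi_{(\pi_i,U_i)}(a))(\Psi_{(\pi_i,U_i)}(a))^*)^{2k}\bigr]$, and by the previous paragraph the norms of these images coincide for $i=1$ and $i=2$. Passing to the limit gives
$$
\bigl\|\Psi_{(\pi_1,U_1)}(a)\bigr\|=\bigl\|\Psi_{(\pi_2,U_2)}(a)\bigr\|,\qquad a\in\M(\A),
$$
so $\Phi$ is well defined and isometric on $C_0^*(\pi_1(\A),U_1)$.

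Finally, $\Phi$ is obviously multiplicative and $^*$-preserving on the dense $^*$-subalgebras since it is induced by the common $^*$-algebra $\M(\A)$ via $\Psi_{(\pi_1,U_1)}$ and $\Psi_{(\pi_2,U_2)}$ (Proposition~\ref{dense subalgera structure prop}). Being an isometric $^*$-homomorphism between dense $^*$-subalgebras, it extends uniquely to an isometric $^*$-isomorphism of the $C^*$-algebras $C^*(\pi_1(\A),U_1)$ and $C^*(\pi_2(\A),U_2)$ sending $\pi_1(a)\mapsto\pi_2(a)$ and $U_1\mapsto U_2$. I expect the main technical obstacle to be the passage from the matching of norms on each spectral subspace $B_k$ to the matching of norms on the whole $C^*$-algebra; this is exactly where property $(*)$, via Theorem~\ref{3a.N}, is indispensable, since without it the matrix coefficients $\NN_k$ need not descend to well-defined contractions on the full algebra, and a mere term-by-term comparison would not yield an isometry.
\end{Proof}
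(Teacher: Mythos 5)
Your argument is correct, and it reaches the same destination as the paper by a route that is close in spirit but differs in its final step. The paper's proof also begins with Corollary~\ref{corollary with distance} to get an isometric isomorphism of the zero\-/diagonal algebras $B_{0,1}\to B_{0,2}$, but it then verifies only the intertwining relation $\Phi(U_1aU_1^*)=U_2\Phi(a)U_2^*$ and invokes Theorem~2.13 of \cite{Leb-Odz} as a black box to produce the isomorphism of the full $C^*$-algebras. You instead unfold that black box: after matching norms on each spectral subspace $B_k$ you pass to the whole dense subalgebra via the norm formula of Theorem~\ref{3a.N}, using that $(a\star a^*)^{\star 2k}$ and its $0$-diagonal $\NN_0\bigl((a\star a^*)^{\star 2k}\bigr)$ are computed in $\M(\A)$ independently of the representation, and that $N_0^{(i)}\circ\Psi_{(\pi_i,U_i)}=\Psi_{(\pi_i,U_i)}\circ\NN_0$. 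This correctly handles the well\-/definedness of $\Phi$ (the maps $\Psi_{(\pi_i,U_i)}$ need not be injective on $\M(\A)$, so one must know the norms agree before declaring $\Phi\circ\Psi_{(\pi_1,U_1)}=\Psi_{(\pi_2,U_2)}$), and multiplicativity and $^*$-preservation come for free from Proposition~\ref{dense subalgera structure prop}. What your version buys is a self-contained argument inside the toolkit already quoted in the paper (Theorem~\ref{3a.N} is \cite[Thm.~2.11]{Leb-Odz}), at the cost of being slightly longer; the paper's version is shorter but leans on an external structural theorem whose proof is essentially the computation you carried out. The only cosmetic blemish is the reuse of the letter $i$ both as the representation index and as the running index inside the displayed norm formula, which is inherited from the paper's notation and harmless.
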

\begin{Proof} Let $B_{0,i}$ be a $^*$-algebra consisting of elements of the form
$\sum_{n=0}^N U^{*n}_i\pi_i(a_n)U_i^n$, $i=1,2$.
In view of Corollary \ref{corollary with distance}, $\Phi$ extends to an isometric isomorphism
from $B_{0,1}$ onto $B_{0,2}$. Moreover, we have
$$
\Phi(U_1aU_1^*)=U_2(\Phi(a))U^*_2, \qquad a \in B_{0,1}.
$$
Hence the assumptions of \cite[Theorem 2.13]{Leb-Odz} are satisfied and the hypotheses follows.
\end{Proof}
\begin{cor}\label{spectral-sp}
If $(\pi,U,H)$ possesses property $(*)$, then we have a point-wise continous action
$\gamma$ of the group $S^1$ on $C^*(\pi(\A),U)$ by authomorphisms
given by
$$
\gamma_z(\pi(a)):=\pi(a), \quad a\in \A, \qquad  \gamma_z(U) := z U, \qquad z\in S^1.
$$
Moreover the spaces $\overline{B}_k$ are the spectral subspaces corresponding to this action, that is we have
$$
\overline{B}_k=\{ a \in C^*(\A,U): \gamma_z(a)= z^k a\}.
$$
In particular, the $C^*$-algebra $\overline{B}_0$ is the fixed point algebra for $\gamma$.
\end{cor}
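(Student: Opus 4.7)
The plan is to construct $\gamma_z$ via the Isomorphism Theorem. For each $z\in S^1$, I would first verify that $(\pi, zU, H)$ is itself a faithful $J$-covariant representation of $(\A,\alpha)$ possessing property $(*)$. The covariance relation $(zU)\pi(a)(zU)^* = |z|^2 U\pi(a)U^* = \pi(\alpha(a))$ holds since $|z|=1$; the ideal of covariance is unchanged because $(zU)^*(zU) = U^*U$; and the norm expression in Corollary \ref{corollary with distance} together with Definition \ref{*} depends only on $|z|$, so property $(*)$ is inherited. Theorem \ref{isomorphiasm theorem} applied to $(\pi, U, H)$ and $(\pi, zU, H)$ then produces a $^*$-isomorphism
$$\gamma_z: C^*(\pi(\A), U) \longrightarrow C^*(\pi(\A), zU) = C^*(\pi(\A), U)$$
fixing $\pi(\A)$ and sending $U$ to $zU$, i.e.\ an automorphism. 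The uniqueness part of the Isomorphism Theorem applied to compositions gives $\gamma_{zw} = \gamma_z \circ \gamma_w$, so $\gamma: S^1 \to \Aut(C^*(\pi(\A), U))$ is a group homomorphism.

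For point-wise continuity, since $\|\gamma_z\| = 1$ uniformly in $z$, it suffices by a standard $\varepsilon/3$ argument to verify continuity on the dense $^*$-subalgebra $C_0^*(\pi(\A), U)$ from Proposition \ref{dense subalgera structure prop}. On each generator $\gamma_z(U^{*m}\pi(a)U^n) = z^{n-m} U^{*m}\pi(a)U^n$, and linearity extends norm-continuity in $z$ to all finite sums, giving the required continuity.

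For the spectral subspace identification, the inclusion $\overline{B}_k \subseteq \{a : \gamma_z(a) = z^k a,\ z\in S^1\}$ is immediate from Corollary \ref{wniosek o spektralnych podprzestrzeniach}: each elementary summand of $B_k$ has net exponent $k$ in $U$ and is therefore scaled by $z^k$, and the eigenspace on the right is norm-closed. For the converse, I would introduce the bounded linear map
$$E_k(a) := \int_{S^1} z^{-k}\, \gamma_z(a)\, dz$$
with respect to normalised Haar measure on $S^1$; this is well-defined by point-wise continuity of $\gamma$, satisfies $\|E_k\| \leq 1$, and projects onto the $z^k$-eigenspace. On a finite sum $a = \sum_{m,n} U^{*m}\pi(a_{m,n})U^n \in C_0^*(\pi(\A), U)$ the integral picks out precisely the pairs with $n - m = k$, so $E_k(a) \in B_k$; density of $C_0^*(\pi(\A), U)$ and continuity of $E_k$ give $E_k(C^*(\pi(\A), U)) \subseteq \overline{B}_k$. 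Any $a$ in the $z^k$-eigenspace satisfies $E_k(a) = a$, yielding the reverse inclusion. Specialising to $k = 0$ produces the statement about the fixed point algebra. The only genuinely delicate step is establishing that $\gamma_z$ is a well-defined automorphism, and this is precisely where property $(*)$ enters, through the Isomorphism Theorem.
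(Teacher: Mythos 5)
Your proof is correct and follows exactly the paper's route: the paper also obtains $\gamma_z$ by applying the Isomorphism Theorem to the pair of representations $(\pi,U,H)$ and $(\pi,zU,H)$, and declares the continuity and spectral-subspace claims obvious. Your additional details --- the inheritance of property $(*)$, the $\varepsilon/3$ continuity argument, and the averaging projection $E_k$ onto $\overline{B}_k$ --- are all sound fillings-in of what the paper leaves to the reader.
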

\begin{Proof} Let $(\pi,U,H)$ be a $J$-covariant representation of $(A,\alpha)$ possessing property $(*)$ and let $z\in S^1$. It is clear that $(\pi,zU,H)$
is also a $J$-covariant representation of $(\A,\alpha)$  and
$(\pi,zU,H)$ possesses property $(*)$. Hence by virtue of Theorem
\ref{isomorphiasm theorem}, $\gamma_z$  extends to an isomorphism of
$C^*(\pi(\A),U)=C^*(\pi(\A),zU)$. The remaining part of the
statement is obvious.
\end{Proof}
\smallskip

The next  theorem is an immediate corollary of the previous statements and \cite[Thm. 2.15]{Leb-Odz}.
It is another manifestation of the fact that  the elements $N_k (a)$, $k\in\Z$, should be considered as
Fourier coefficients for $a \in C^*(\pi(\A),U)$.
\begin{thm}
\label{uniqueNk}
Let $(\pi,U,H)$ possess property $(*)$ and let
\begin{center}
$a \in C^*(\pi(\A),U)$.
\end{center}
Then the following conditions are equivalent:

\smallskip
\quad\llap{$(i)$}\ \ $a=0;$

\smallskip
\quad\llap{$(ii)$}\ \ $N_k (a)=0$, \,$k\in\Z;$

\smallskip
\quad\llap{$(iii)$}\ \ $N_0 (a^*a)=0$.

\end{thm}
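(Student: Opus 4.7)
The plan is to prove the cycle $(i) \Rightarrow (ii) \Rightarrow (iii) \Rightarrow (i)$, though it is cleaner to observe the two trivial implications $(i) \Rightarrow (ii)$ and $(i) \Rightarrow (iii)$ (immediate from linearity of $N_k$ and of $a \mapsto N_0(a^*a)$) and then to prove $(iii) \Rightarrow (i)$ and $(ii) \Rightarrow (i)$ separately. The key tools will be Theorem~\ref{3a.N} and the gauge action $\gamma$ from Corollary~\ref{spectral-sp}, together with the observation that $N_0$ coincides with the canonical conditional expectation $\int_{S^1} \gamma_z(\,\cdot\,)\, dz$ onto $\overline{B}_0$; the latter can be verified on the dense subalgebra $C_0^*(\pi(\A),U)$ directly from \eqref{rzuty na coefficienty} using the identity $\gamma_z(U^{*m}\pi(a)U^n) = z^{n-m}U^{*m}\pi(a)U^n$, and then extended by continuity. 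In particular this exhibits $N_0$ as a positive contraction.

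For $(iii) \Rightarrow (i)$, I would exploit positivity of $N_0$ together with the standard $C^*$-inequality $a^*ba \leq \|b\|\, a^*a$ valid for $b \geq 0$. Taking $b = (aa^*)^{2k-1}$ yields
\begin{equation*}
0 \leq N_0\!\big((a^*a)^{2k}\big) = N_0\!\big(a^*(aa^*)^{2k-1}a\big) \leq \|a\|^{2(2k-1)}\, N_0(a^*a) = 0
\end{equation*}
for every $k \geq 1$. Applying Theorem~\ref{3a.N} to $a^*$ then forces
\begin{equation*}
\|a\| = \|a^*\| = \lim_{k\to\infty}\sqrt[4k]{\|N_0((a^*a)^{2k})\|} = 0,
\end{equation*}
and hence $a = 0$.

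For $(ii) \Rightarrow (i)$, I would use Fourier analysis on the circle. The function $z \mapsto \gamma_z(a)$ is continuous from $S^1$ into $C^*(\pi(\A),U)$, and its $k$-th Fourier coefficient $\int_{S^1} z^{-k}\gamma_z(a)\, dz$ equals the spectral projection $E_k(a)$ onto $\overline{B}_k$. A direct comparison with \eqref{rzuty na coefficienty} on the dense subalgebra gives $E_k(a) = N_k(a)\, U^k$ for $k \geq 0$ and $E_k(a) = U^{*|k|}\, N_k(a)$ for $k < 0$, and these identifications extend by continuity to all of $C^*(\pi(\A),U)$. Thus vanishing of every $N_k(a)$ forces vanishing of every Fourier coefficient of $z \mapsto \gamma_z(a)$, and Fourier uniqueness for continuous Banach-space-valued functions on $S^1$ (say via Fejér summation, or Hahn--Banach reduction to the scalar case) gives $\gamma_z(a) \equiv 0$; evaluating at $z=1$ yields $a=0$.

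The hard part will be the bookkeeping in $(ii)\Rightarrow (i)$: identifying $N_k(\,\cdot\,)\,U^k$ (respectively $U^{*|k|} N_k(\,\cdot\,)$) with the genuine spectral projection $E_k$, and carefully invoking Banach-valued Fourier uniqueness. Both steps are standard, but they must be tied together so that the full chain of equivalences closes; everything else reduces to algebraic manipulations in $C^*(\pi(\A),U)$ and to the explicit norm formula furnished by Theorem~\ref{3a.N}.
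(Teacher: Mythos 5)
Your proposal is correct, but it is worth noting that the paper does not actually prove this theorem at all: it states that the result ``is an immediate corollary of the previous statements and \cite[Thm.\ 2.15]{Leb-Odz}'', so the entire argument is outsourced to the earlier Lebedev--Odzijewicz paper on extensions by partial isometries. What you have written is a self-contained reconstruction, and it hangs together: the identification of $N_0$ with $\int_{S^1}\gamma_z(\cdot)\,dz$ is legitimate because both maps are contractions (the former by the theorem establishing \eqref{rzuty na coefficienty} under property $(*)$, the latter as an average of automorphisms) agreeing on the dense subalgebra $C_0^*(\pi(\A),U)$, which gives you the positivity needed for the chain $0\le N_0((a^*a)^{2k})\le \|a\|^{2(2k-1)}N_0(a^*a)$; and applying Theorem~\ref{3a.N} to $a^*$ rather than $a$ is exactly the right move to convert hypothesis $(iii)$ into $\|a\|=0$. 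Likewise the identification $E_k(a)=N_k(a)U^k$ (resp.\ $U^{*|k|}N_k(a)$) checks out against the matrix conventions $a_n^{(k)}=a_{n,k+n}$ for $k\ge 0$ and $a_n^{(k)}=a_{n-k,n}$ for $k<0$, and Fej\'er summation for the continuous Banach-valued function $z\mapsto\gamma_z(a)$ closes $(ii)\Rightarrow(i)$. The trade-off is transparency versus economy: the paper's route buys brevity at the cost of sending the reader to \cite{Leb-Odz}, where essentially the same circle of ideas (fixed-point expectation for the gauge action plus the norm formula \eqref{be3.131}) is developed; your version makes the dependence on property $(*)$ explicit at each step, which is arguably more informative, since property $(*)$ is precisely what guarantees that the $N_k$ are well defined and contractive in the first place.
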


The results presented above give us a possibility to write out a
  criterion for a representation of the crossed product to be faithful.

\begin{thm}\label{5.4} Let $C^*(\A,\alpha, J)$ be the crossed product given by
Definition~\ref{cr-pr-def} and   $(\pi,U,H)$ be a faithful
$J$-covariant representations of $(\A,\alpha)$. Then the relations
\begin{equation}\label{e-fath}
(\pi\times U)(a)=\pi(a), \ a\in \A;\qquad (\pi\times U)(\hat{u})=U
\end{equation}
determine in a unique way an epimorphism $\pi\times U: C^*(\A,\alpha, J) \to C^*(\pi(\A),U)$. Moreover
$\pi\times U$ is an isomorphism iff $(\pi,U,H)$ possesses property $(^*)$.
\end{thm}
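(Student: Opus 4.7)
The plan is to construct $\pi\times U$ via the universal property of the enveloping $C^*$-algebra, and then prove the ``iff'' using a circle (gauge) action on both sides: the existence of such an action on $C^*(\pi(\A),U)$ is essentially equivalent to property $(*)$, while on $C^*(\A,\alpha,J)$ it is free of charge.

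\textbf{Construction of $\pi\times U$.} First, I would apply Proposition~\ref{dense subalgera structure prop} to obtain the $^*$-homomorphism $\Phi_{(\pi,U)}:\M(\A)\to L(H)$ with $\Phi_{(\pi,U)}(a)=\pi(a)$ and $\Phi_{(\pi,U)}(u)=U$. The aim is to show it is $\||\cdot\||_J$-contractive, so it descends to the Banach $^*$-algebra $\M(\A)/\||\cdot\||_J$ and, by the universal property of the enveloping $C^*$-algebra, extends to the required $^*$-homomorphism $\pi\times U:C^*(\A,\alpha,J)\to C^*(\pi(\A),U)$. Contractivity follows from Corollary~\ref{corollary with distance}: since $(\pi,U,H)$ is faithful and $J$-covariant, for every $k\in\Z$ and every $a^{(k)}\in\M_k$ we have $\|\Phi_{(\pi,U)}(a^{(k)})\|=\||a^{(k)}\||_J$, so the triangle inequality on a general $a=\sum_k a^{(k)}\in\M(\A)$ gives $\|\Phi_{(\pi,U)}(a)\|\le\sum_k\||a^{(k)}\||_J=\||a\||_J$. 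Surjectivity is immediate as the image contains $\pi(\A)\cup\{U\}$ and is closed, and uniqueness follows from $\A$ and $\hat u$ generating $C^*(\A,\alpha,J)$.

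\textbf{Sufficiency via gauge action.} The $\Z$-grading of Proposition~\ref{**} produces, for each $z\in S^1$, a $^*$-automorphism of $\M(\A)$ acting on $\M_k$ by multiplication by $z^k$. Because $\||\cdot\||_J$ is built as a sum over the grading (each summand depending only on one diagonal), it is preserved, so the automorphism descends and extends to $C^*(\A,\alpha,J)$; point-norm continuity of $z\mapsto\gamma_z$ follows from uniform boundedness and continuity on the dense subalgebra. Averaging yields a faithful (on positives) conditional expectation $E_0$ onto $\overline{\M_0}$. Assuming $(\pi,U,H)$ has property $(*)$, Corollary~\ref{spectral-sp} supplies the parallel circle action and conditional expectation $E_0'$ on $C^*(\pi(\A),U)$, and the two actions are intertwined by $\pi\times U$ because they agree on generators. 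Moreover $\pi\times U$ is isometric on $\overline{\M_0}$: Corollary~\ref{corollary with distance} gives $\|\Phi_{(\pi,U)}(a)\|=\||a\||_J$ for $a\in\M_0$, which simultaneously equals the norm of $a$ in $C^*(\A,\alpha,J)$ (bounded above by $\||a\||_J$ and below by any faithful $^*$-representation such as $\Phi_{(\pi,U)}$ itself). Therefore, if $(\pi\times U)(b)=0$, then $(\pi\times U)(E_0(b^*b))=E_0'(0)=0$, the isometry forces $E_0(b^*b)=0$, and faithfulness of $E_0$ gives $b=0$.

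\textbf{Necessity.} Conversely, if $\pi\times U$ is an isomorphism, I transport the gauge action $\gamma$ on $C^*(\A,\alpha,J)$ through it to obtain a point-norm continuous circle action $\gamma'$ on $C^*(\pi(\A),U)$ with $\gamma_z'(\pi(a))=\pi(a)$ and $\gamma_z'(U)=zU$. Averaging gives a conditional expectation $E_0':C^*(\pi(\A),U)\to\overline{B_0}$, which on a matrix word reads $E_0'(U^{*m}\pi(a)U^n)=\delta_{m,n}U^{*m}\pi(a)U^m$, i.e. coincides with $N_0$ of Proposition~\ref{proposition for k-diagonals} on the dense subalgebra. Being a conditional expectation it is norm-decreasing, which is precisely property $(*)$ in the equivalent form given in Definition~\ref{*}.

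The main technical hurdle is the isometric embedding $\overline{\M_0}\hookrightarrow C^*(\A,\alpha,J)$, which is what reduces the injectivity of $\pi\times U$ to the faithfulness of $E_0$ on positives; this step is precisely where Corollary~\ref{corollary with distance} ``uncouples'' the fixed-point algebra from any particular representation, allowing the gauge-invariant uniqueness argument to close the loop.
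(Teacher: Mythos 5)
Your proof is correct. Note first that the paper offers no written proof of Theorem~\ref{5.4}: it is asserted to be a consequence of ``the results presented above'', i.e.\ of Corollary~\ref{corollary with distance}, the Isomorphism Theorem~\ref{isomorphiasm theorem}, Corollary~\ref{spectral-sp} and Theorem~\ref{uniqueNk}, the last two of which are themselves imported from \cite{Leb-Odz} (Theorems~2.13 and 2.15 there). Your construction of the epimorphism is exactly the intended one: $\Phi_{(\pi,U)}$ is $\||\cdot\||_J$-contractive because Corollary~\ref{corollary with distance} makes $\|\Phi_{(\pi,U)}(\cdot)\|$ and $\||\cdot\||_J$ agree on each $\M_k$, and the universal property of the enveloping $C^*$-algebra does the rest. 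Where you genuinely diverge is in closing the ``iff'': instead of invoking the Lebedev--Odzijewicz Fourier-coefficient machinery as a black box, you build the circle action directly on $C^*(\A,\alpha,J)$ from the $\Z$-grading of $\M(\A)$, observing that $\||\cdot\||_J$ is a sum of diagonal-wise quantities each invariant under $a^{(k)}\mapsto z^k a^{(k)}$, and then run the standard gauge-invariant uniqueness argument (isometry on the fixed-point part via Corollary~\ref{corollary with distance}, faithfulness of the averaged expectation, intertwining with the action of Corollary~\ref{spectral-sp}). This is morally the same mechanism --- $N_0$ \emph{is} the conditional expectation onto $\overline{B}_0$ and Theorem~\ref{uniqueNk} \emph{is} its faithfulness --- but your version is self-contained, and it buys a clean proof of the necessity direction: transporting the gauge action through the isomorphism and noting that the averaged expectation is contractive and equals $N_0$ on matrix words is precisely property~$(*)$. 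The only cosmetic quibble is that the isometry of $\pi\times U$ on the image of $\M_0$ needs only that $\Phi_{(\pi,U)}$ descends to \emph{some} representation of the quotient (so that the enveloping norm is squeezed between $\|\Phi_{(\pi,U)}(\cdot)\|$ and $\||\cdot\||_J$), not its faithfulness; your parenthetical says as much, so nothing is missing.
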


\subsection{Construction of faithful representations}\label{faith-covar}
 Given a faithful $J$-covariant representation  of $(\A,\alpha)$   one can construct a covariant representation of $(\A,\alpha)$ possessing property~$(^*)$ thus obtaining a faithful representation of $ C^*(\A,\alpha,J)$ (in view of Theorem~\ref{5.4}). Actually, we exploit  the standard argument cf.~\cite{Ant-Bakht-Leb}.

\begin{prop}
For any faithful $J$-covariant representation $(\widetilde{\pi},\widetilde{U},\widetilde{H})$ of $(\A,\alpha)$ the triple $(\pi,U, { H})$ where $H :=l^2 ({\mathbb Z}, \widetilde{H})$,
\begin{equation}
(\pi (a)\xi )_n := \widetilde{\pi} (a) (\xi_n), \quad \textrm{ and }\quad
(U\xi )_n := \widetilde{U} (\xi_{n-1})\label{18e}
\end{equation}
for $a\in \A$, $ \xi = \{ \xi_n  \}_{n\in\Z}\in H=l^2 ({\mathbb Z}, \widetilde{H})$, is a faithful $J$-covariant representation which integrate via  \eqref{e-fath} to a faithful representation $(\pi\times U)$ of $ C^*(\A,\alpha,J)$.
\end{prop}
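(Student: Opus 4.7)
The plan is to verify three properties of the triple $(\pi,U,H)$ and then invoke Theorem~\ref{5.4}: (i) it is a representation of $(\A,\alpha)$, (ii) it is faithful and $J$-covariant, (iii) it possesses property $(*)$. Points (i) and (ii) should be immediate. Since $\pi$ and $U$ are assembled coordinate-wise from $\widetilde{\pi}$ and $\widetilde{U}$, the covariance relation $U\pi(a)U^* = \pi(\alpha(a))$ follows directly from the analogous relation for $(\widetilde{\pi},\widetilde{U})$. Faithfulness of $\pi$ is clear because $\pi(a)=0$ forces $\widetilde{\pi}(a)=0$. A short computation gives $(U^*U\xi)_n = \widetilde{U}^*\widetilde{U}\xi_n$, so $U^*U$ acts coordinate-wise as $\widetilde{U}^*\widetilde{U}$; consequently the covariance ideal $\{a\in\A : U^*U\pi(a)=\pi(a)\}$ coincides with the covariance ideal of $(\widetilde{\pi},\widetilde{U},\widetilde{H})$, which is $J$ by hypothesis.

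The substantive step is (iii). The idea is to exploit the bilateral structure of $l^2(\Z,\widetilde{H})$ to implement a gauge action by unitaries. For each $z\in S^1$ I would define $V_z\in L(H)$ by $(V_z\xi)_n := z^n\xi_n$. These are unitaries satisfying $V_z\pi(a)V_z^* = \pi(a)$ for all $a\in\A$ and $V_zUV_z^* = zU$, so $\gamma_z(T) := V_zTV_z^*$ defines a strongly continuous action of $S^1$ on $L(H)$ that preserves $C^*(\pi(\A),U)$. Averaging yields a contractive linear map $E(T) := \int_{S^1}\gamma_z(T)\,dz$ on $C^*(\pi(\A),U)$. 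For $T = \sum_{m,n}U^{*m}\pi(a_{m,n})U^n$ one computes $\gamma_z(T) = \sum_{m,n} z^{n-m}U^{*m}\pi(a_{m,n})U^n$, so $E(T) = \sum_{m}U^{*m}\pi(a_{m,m})U^m$. Contractivity of $E$ then reads
$$\Bigl\|\sum_{m}U^{*m}\pi(a_{m,m})U^m\Bigr\| \leq \Bigl\|\sum_{m,n}U^{*m}\pi(a_{m,n})U^n\Bigr\|,$$
which is precisely property $(*)$ from Definition~\ref{*}.

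With (i)--(iii) in hand, Theorem~\ref{5.4} immediately provides the desired epimorphism $\pi\times U: C^*(\A,\alpha,J) \to C^*(\pi(\A),U)$ determined by \eqref{e-fath}, and it asserts that $\pi\times U$ is an isomorphism precisely because $(\pi,U,H)$ has property $(*)$. I expect the main obstacle to be a clean verification of strong continuity and, more importantly, the bookkeeping that shows $E$ is well-defined on all of $C^*(\pi(\A),U)$ rather than merely on the dense $^*$-algebra $C_0^*(\pi(\A),U)$ of finite sums; this is exactly the point where the bilateral index set $\Z$ (as opposed to $\N$, which underlies the Toeplitz representation of Subsection~\ref{what da hell}) is essential, since it is what guarantees that $V_z$ is unitary and that conjugation by $V_z$ multiplies each spectral component by the correct power of $z$.
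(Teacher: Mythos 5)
Your argument is correct, but it takes a genuinely different route from the paper's. For the key step --- property $(*)$ --- the paper argues directly with vectors: given $\varepsilon>0$ it picks a unit vector $\eta\in\widetilde{H}$ almost norming the diagonal sum $\sum_m \widetilde{U}^{*m}\widetilde{\pi}(a_{m,m})\widetilde{U}^m$, places it at coordinate $0$ of $l^2(\Z,\widetilde{H})$, and observes that each off-diagonal term $U^{*m}\pi(a_{m,n})U^n\xi$ with $n\neq m$ is supported on a coordinate shifted by $n-m$, hence orthogonal to the diagonal contribution; this gives $\|\sum_{m,n}U^{*m}\pi(a_{m,n})U^n\xi\|^2\geq\|\sum_{m}U^{*m}\pi(a_{m,m})U^m\xi\|^2$ and property $(*)$ follows by letting $\varepsilon\to 0$. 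You instead implement the gauge action spatially by the unitaries $V_z$, $(V_z\xi)_n=z^n\xi_n$, and obtain property $(*)$ as contractivity of the averaged map $E=\int_{S^1}\gamma_z(\cdot)\,dz$. Your computation $V_zUV_z^*=zU$ is right, and since Definition~\ref{*} only requires the inequality for finite sums (elements of $C_0^*(\pi(\A),U)$ given by matrices in $\M(\A)$, which have finitely many nonzero entries), the extension of $E$ to the full $C^*$-algebra that you flag as the main obstacle is not actually needed: on a finite sum $z\mapsto\gamma_z(T)$ is a trigonometric polynomial with operator coefficients, so the average is elementary. Both arguments rest on the same structural point --- the bilateral index set $\Z$ makes the shift unitary (resp.\ makes $V_z$ unitary with the right commutation relation) --- but the paper's is more elementary and self-contained, while yours buys more: it exhibits a spatially implemented circle action and a genuine conditional expectation onto the fixed-point algebra, which essentially re-proves Corollary~\ref{spectral-sp} along the way.
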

\begin{Proof}
Routine verification shows that  $(\pi,U, { H})$  is a faithful
$J$-covariant representation of $(\A,\alpha)$. By
Theorem~\ref{5.4} it suffices to verify that $(\pi,U, { H})$
possesses property $(^*)$. To this end take any $N\in \N$,
$a_{m,n}\in \A$, $n,m=0,1...,N$, and note that by the explicit
form of  \eqref{18e} we have
\begin{equation}\label{eq}
\Vert\sum_{m=0}^N {\widetilde{U}}^{*m}\widetilde{\pi}(a_{m,m})\widetilde{U}^m
\Vert = \Vert \sum_{m=0}^N U^{*m}\pi(a_{m,n})U^m \Vert
\end{equation}
For a given $\varepsilon > 0$ there exists   a vector
$\eta \in \widetilde{H}$ such that $\Vert \eta \Vert =1$ and
\begin{equation}\label{e*}
\Vert \left(\sum_{m=0}^N {\widetilde{U}}^{*m}\widetilde{\pi}(a_{m,m})\widetilde{U}^m\right)
\eta \Vert > \Vert\sum_{m=0}^N {\widetilde{U}}^{*m}\widetilde{\pi}(a_{m,m})\widetilde{U}^m
\Vert - \varepsilon .
\end{equation}
Set $\xi = \{ \xi_n  \}_{n\in
\Z} \in l^2 ({\mathbb Z}, \widetilde{H})$ by $\xi_n = \delta_{0,n}\eta $,
where $\delta_{i,j}$ is the Kronecker symbol. We have that $\Vert
\xi \Vert = 1$ and  \eqref{18e} along with  (\ref{eq}) and  (\ref{e*}) imply
\begin{equation}\label{e21}
\Vert \left(\sum_{m=0}^N U^{*m}\pi(a_{m,n})U^m\right)\xi  \Vert>
\Vert \sum_{m=0}^N U^{*m}\pi(a_{m,n})U^m \Vert
-\varepsilon
\end{equation}
Now the explicit form of \ $\left(\sum_{m,n=0}^N U^{*m}\pi(a_{m,n})U^n\right)
\xi $    \ and
\eqref{e21} imply
\[
\Vert \sum_{m,n=0}^N U^{*m}\pi(a_{m,n})U^n
       \Vert^2 \ge
\Vert  \left(\sum_{m,n=0}^N U^{*m}\pi(a_{m,n})U^n\right)\xi
\Vert^2\ge
\]
\begin{equation}\label{end}
\ge    \Vert \left(\sum_{m=0}^N U^{*m}\pi(a_{m,n})U^m\right)\xi
\Vert^2     > \left(\Vert\sum_{m=0}^N U^{*m}\pi(a_{m,m})U^m\Vert -
\varepsilon\right)^2
\end{equation}
 which by  the
arbitrariness of $\varepsilon$ proves property $(^*)$ for
$(\pi,U,{ H})$. \end{Proof}

Following the foregoing construction one can also  arrive at the next
 \begin{prop}\label{star property*}
 The crossed product  $ C^*(\A,\alpha,J)$, given by Definition~\ref{crossed product defn}  possesses property $(*)$, that is the algebra $\A$ is embedded in the crossed product  $ C^*(\A,\alpha,J)$ and   for any  $N\in \N$ and  $a_{m,n}\in \A$, $n,m=0,1...,N$ the following inequality holds
  \begin{equation}\label{*-cross}
\|\sum_{m=0}^N u^{*m}a_{m,m}u^m \| \leq \|\sum_{m,n=0}^N u^{*m}a_{m,n}u^n \|.
\end{equation}
\end{prop}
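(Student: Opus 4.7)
The plan is to transport the inequality from a concrete faithful $J$-covariant representation possessing property $(*)$ back to the universal crossed product via the isomorphism supplied by Theorem~\ref{5.4}.

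First, I would invoke Theorem~\ref{existance theorem} (using the regular representation constructed in Proposition~\ref{regular representation proposition}) to obtain a faithful $J$-covariant representation $(\widetilde{\pi},\widetilde{U},\widetilde{H})$ of $(\A,\alpha)$. The preceding proposition then yields the $\ell^2(\Z,\widetilde{H})$-amplification $(\pi,U,H)$, which is again a faithful $J$-covariant representation and, decisively, possesses property $(*)$.

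Next, I would appeal to Theorem~\ref{5.4}: since $(\pi,U,H)$ is faithful $J$-covariant and possesses property $(*)$, the induced $^*$-homomorphism
\[
\pi\times U \colon C^*(\A,\alpha,J) \longrightarrow C^*(\pi(\A),U)
\]
is a $^*$-isomorphism. Composing the canonical map $\A\to C^*(\A,\alpha,J)$ (given by the universal property of Definition~\ref{crossed product defn}) with this isomorphism gives $\pi$, which is faithful on $\A$; therefore $\A$ embeds into the crossed product. Moreover, under $\pi\times U$ the element $\sum_{m,n=0}^N u^{*m}a_{m,n}u^n$ is carried to $\sum_{m,n=0}^N U^{*m}\pi(a_{m,n})U^n$, and the diagonal sum to $\sum_{m=0}^N U^{*m}\pi(a_{m,m})U^m$; the desired inequality \eqref{*-cross} thus becomes a verbatim restatement of property $(*)$ for $(\pi,U,H)$ in the language of the crossed product.

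I do not anticipate a serious obstacle, since all the required machinery has already been assembled in the excerpt: existence of a faithful $J$-covariant representation, the amplification construction yielding property $(*)$, and the criterion of faithfulness given by Theorem~\ref{5.4}. The only point requiring care is the verification that Definition~\ref{crossed product defn} is genuinely universal among $J$-covariant representations, so that $(\pi,U,H)$ indeed factors through it as prescribed by \eqref{e-fath}; this is immediate from the defining relations, as $\pi(\A)$ and $U$ satisfy them in $L(H)$.
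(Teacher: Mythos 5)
Your overall strategy (amplify a faithful $J$-covariant representation over $\ell^2(\Z,\cdot)$ to get property $(*)$, then transfer the inequality to the universal algebra) is the right one and matches the paper's in spirit, but the transfer step has a genuine gap: the appeal to Theorem~\ref{5.4} is circular here. Theorem~\ref{5.4} is stated for the crossed product of Definition~\ref{cr-pr-def} (the matrix-calculus/enveloping-$C^*$-algebra construction), whereas Proposition~\ref{star property*} concerns the universal crossed product of Definition~\ref{crossed product defn}. The identification of these two objects is exactly Proposition~\ref{crossed equiv}, whose proof \emph{uses} Proposition~\ref{star property*}. So you may not yet conclude that $\pi\times U\colon C^*(\A,\alpha,J)\to C^*(\pi(\A),U)$ is injective; universality only gives you the contractive epimorphism, i.e.\ $\|(\pi\times U)(x)\|\le\|x\|$, which is the wrong direction for deducing \eqref{*-cross} from property $(*)$ of $(\pi,U,H)$.

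The paper closes this gap without invoking any isomorphism: it starts not from an arbitrary faithful $J$-covariant representation of $(\A,\alpha)$, but from the disintegration $(\widetilde{\pi},\widetilde{U},\widetilde{H})$ of a \emph{faithful representation $\overline{\pi}$ of the universal algebra itself}. Then (i) the explicit form of the amplification \eqref{18e} shows that the norm of the diagonal element is preserved, $\Vert\sum_m \widetilde{U}^{*m}\widetilde{\pi}(a_{m,m})\widetilde{U}^m\Vert=\Vert\sum_m U^{*m}\pi(a_{m,m})U^m\Vert$, and since $\overline{\pi}$ is faithful the left-hand side equals $\|\sum_m u^{*m}a_{m,m}u^m\|$; (ii) universality gives $\|\sum_{m,n}u^{*m}a_{m,n}u^n\|\ge\Vert\sum_{m,n}U^{*m}\pi(a_{m,n})U^n\Vert$; and (iii) property $(*)$ of the amplification, i.e.\ the chain \eqref{end}, links the two. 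Only the easy (contractive) direction of universality is ever used. To repair your argument you would either have to adopt this choice of starting representation, or independently prove that the norm of a diagonal element $\sum_m u^{*m}a_{m,m}u^m$ in the universal algebra coincides with its norm in your chosen faithful $J$-covariant representation (which is essentially Corollary~\ref{corollary with distance} plus an argument that no representation of the defining relations can exceed that value) --- neither of which is supplied by citing Theorem~\ref{5.4}.
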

 \begin{Proof}
 Let  $ C^*(\A,\alpha,J)$ be given by Definition~\ref{crossed product defn}.  Take any faithful
 representation $\overline{\pi}: C^*(\A,\alpha,J) \to L(\widetilde{H})$ of $ C^*(\A,\alpha,J)$ in a Hilbert space $\widetilde{H}$
  and 'disintegrate'  $\overline{\pi}$ to $(\widetilde{\pi},\widetilde{U},\widetilde{H})$, i.e.  let $\widetilde{\pi}:=\overline{\pi}|_\A$
  and $\widetilde{U}:=\overline{\pi}(u)$. Then $(\widetilde{\pi},\widetilde{U},\widetilde{H})$  is  a  faithful
  $J$-covariant representation  of $(\A,\alpha)$   (note that Theorem~\ref{existance theorem} and Definition~\ref{crossed product defn} imply that $\widetilde{\pi}$ is a faithful representation of $\A$). Consider the space ${ H} =l^2 ({\mathbb Z}, \widetilde{H})$ and
representation $(\pi,U,H)$  given by   \eqref{18e}. By the
universality of  $C^*(\A,\alpha,J)$ (Definition~\ref{crossed
product defn}) this representation give rise to a representation
of $C^*(\A,\alpha,J)$. And therefore for any any $N\in \N$ and
$a_{m,n}\in \A$, $n,m=0,1...,N$ one has
$$
\|\sum_{m,n=0}^N u^{*m}a_{m,n}u^n \|\ge \Vert \sum_{m,n=0}^N U^{*m}\pi(a_{m,n})U^n
       \Vert.
$$
Moreover  by the explicit form of  \eqref{18e} we have
$$
\|\sum_{m=0}^N u^{*m}a_{m,m}u^m \|= \Vert \sum_{m=0}^N U^{*m}\pi(a_{m,m})U^m
       \Vert.
$$
Now \eqref{*-cross}  follows from \eqref{end}.
\end{Proof}

The above results imply
\begin{prop}\label{crossed equiv}
The crossed products given by Definitions~\ref{crossed product defn} and~\ref{cr-pr-def} are canonically
isomorphic.
\end{prop}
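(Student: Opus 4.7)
The plan is to construct mutually inverse $*$-homomorphisms between the two universal objects. Denote by $A_1$ the crossed product of Definition~\ref{crossed product defn} and by $A_2$ the enveloping $C^*$-algebra of Definition~\ref{cr-pr-def}, and write $\hat{a}$, $a\in\A$, and $\hat{u}$ for the images in $A_2$ of the natural generators of $\M(\A)$.

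To produce $\Phi\colon A_1\to A_2$ I would verify the defining relations of $A_1$ in $A_2$. Direct computations in $\M(\A)$ using~\eqref{star1} yield $u\star u^*\star u=u$ (so $\hat{u}$ is a partial isometry) and $u\star a\star u^*=\alpha(a)$, while $u^*\star u\star a-a$ turns out to be the $0$-diagonal matrix whose $(0,0)$-entry is $-a$, $(1,1)$-entry is $\alpha(a)$, and all other entries zero. Evaluating the formula for $\||\cdot\||_J$ on this matrix—and noticing that the sum $\alpha^{i}(-a)+\alpha^{i-1}(\alpha(a))$ vanishes for every $i\geq 1$—reduces everything to the $(0,0)$ contribution $d(a,J)$. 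To finish verification of the covariance relation in $A_2$ I would produce a matching lower bound for the universal $C^*$-seminorm: any faithful $J$-covariant representation with property~$(*)$ (whose existence is granted by Theorem~\ref{existance theorem} and Subsection~\ref{faith-covar}) provides a bounded $*$-representation of $\M(\A)$ in which the image of $u^*\star u\star a-a$ has norm exactly $d(a,J)$. Since $J$ is closed this gives $\hat{u}^*\hat{u}\hat{a}=\hat{a}\Leftrightarrow a\in J$, and the universal property of $A_1$ then produces~$\Phi$.

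Conversely, Proposition~\ref{star property*} asserts that $\A$ embeds into $A_1$ and that $A_1$ enjoys property~$(*)$. Reproducing the argument of Proposition~\ref{dense subalgera structure prop} with the partial isometry $u\in A_1$ in place of the Hilbert-space operator $U$ (the requisite identity $u^*u\in\A'$ follows inside the $C^*$-algebra $A_1$ exactly as in the proof of Lemma~\ref{iteration of representations}), one obtains a $*$-homomorphism $\widehat{\Psi}\colon\M(\A)\to A_1$, $\{a_{m,n}\}\mapsto\sum u^{*m}a_{m,n}u^n$. To factor $\widehat{\Psi}$ through $A_2$ it suffices to bound it by $\||\cdot\||_J$: picking any faithful representation $\rho$ of $A_1$, the triple $(\rho|_\A,\rho(u),H)$ is a faithful $J$-covariant representation of $(\A,\alpha)$, and Corollary~\ref{corollary with distance} identifies the $A_1$-norm of the image of a $k$-diagonal element $x^{(k)}\in\M_k$ with the $k$-th summand of $\||x\||_J$. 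Combined with the finite Fourier decomposition of Proposition~\ref{nie dam rady umre} and the triangle inequality this gives $\|\widehat{\Psi}(x)\|_{A_1}\leq\||x\||_J$, and $\widehat{\Psi}$ extends uniquely to $\Psi\colon A_2\to A_1$.

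Both $\Phi$ and $\Psi$ act as the identity on the generating set $\A\cup\{u\}$, so the compositions $\Phi\circ\Psi$ and $\Psi\circ\Phi$ agree with the identity on a dense subalgebra and hence on the whole algebra. The principal technical subtlety is the two-sided evaluation $\||u^*\star u\star a-a\||_J=d(a,J)$ needed to pin down the covariance ideal in $A_2$—the upper bound relies on the telescoping cancellation inside the sums $\sum_{j=0}^i\alpha^{i-j}(\cdot)$, while the lower bound relies on the existence of a faithful $J$-covariant representation with property~$(*)$. Everything else is a formal universal-property argument once the matrix calculus of Section~\ref{22} and the embedding $\A\hookrightarrow A_1$ are in place.
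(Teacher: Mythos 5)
Your proposal is correct, but it assembles the isomorphism differently from the paper. The paper's proof is a two-step reduction to machinery already in place: it observes that the universal algebra of Definition~\ref{crossed product defn} is $C^*(\pi_1(\A),U_1)$ for a faithful $J$-covariant pair with property~$(*)$ (Proposition~\ref{star property*}), that the enveloping algebra of Definition~\ref{cr-pr-def} is isomorphic to $C^*(\pi_2(\A),U_2)$ for another such pair (via Theorem~\ref{5.4} and the construction of Subsection~\ref{faith-covar}), and then quotes the Isomorphism Theorem~\ref{isomorphiasm theorem} to identify the two concrete algebras. You instead build the two maps by hand: $\Phi$ by verifying the defining relations of Definition~\ref{crossed product defn} inside $\M(\A)/\||\cdot\||_J$ (the computations $u\star u^*\star u=u$, $u\star a\star u^*=\alpha(a)$ and the evaluation $\||u^*\star u\star a-a\||_J=d(a,J)$ all check out), and $\Psi$ by bounding the matrix-calculus representation of Proposition~\ref{dense subalgera structure prop} by $\||\cdot\||_J$ via Corollary~\ref{corollary with distance} and the finite Fourier decomposition. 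What your route buys is independence from Theorems~\ref{isomorphiasm theorem} and~\ref{5.4} (and hence from the cited result of Lebedev--Odzijewicz behind them) at the price of the explicit $\star$-computations; what the paper's route buys is brevity and a reusable uniqueness principle. Two small remarks: (1) your evaluation of $\||u^*\star u\star a-a\||_J$ uses the $i=0$ term $d(a_0^{(0)},J)$, which is absent from the displayed formula for $\||\cdot\||_J$ as literally written ($\max_{i=1,\dots,n}$); this is evidently a misprint in the paper, since the proof of Proposition~\ref{proposition for k-diagonals} runs the maximum over $i=0,\dots,N$, and your reading is the one consistent with that proof — but it is worth flagging, because with the literal formula your key identity would read $0$ instead of $d(a,J)$. (2) For the existence of $\Phi$ via universality only the inclusion $J\subset\{a:\hat u^*\hat u\hat a=\hat a\}$ is needed (cf.\ how the universal property is used in Theorem~\ref{5.4} and Corollary~\ref{C-P-cross}); the reverse inclusion, which you also establish, is then automatic once $\Psi$ is shown to be an inverse, so your lower-bound argument is a welcome consistency check rather than a logical necessity.
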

\begin{Proof} Let  $C^*(\A,\alpha,J)$ be the crossed product given by Definition~\ref{crossed product defn}.
Set
$$
\pi_1:\A\to\hat{\A}, \ \pi_1 (a):= \hat{a}, \qquad \textrm{ and }\qquad
U_1:=\hat{u},
$$
where $\hat{a}$ and $\hat{u}$ are the universal elements, corresponding to $a\in\A$ and $u$ according to Definition~\ref{crossed product defn}. Then
$$
C^*(\A,\alpha,J)=C^*(\pi_1(\A),U_1).
$$
By Definition~\ref{crossed product defn} and Theorem \ref{existance theorem},  $\pi_1$ establishes an isomorphism between $\A$ and $\hat{\A}$. By Proposition~\ref{star property*} \ $C^*(\hat{\A},\hat{u})$ possesses
property~$(^*)$.

Now, let $C^*(\A,\alpha,J)$ be the crossed product given by
Definition~\ref{cr-pr-def}, and let $(\pi,U,H)$ be a faithful
$J$-covariant representation of $(\A,\alpha)$ possessing property
$(^*)$ (such representation does exist by
Remark~\ref{faith-covar}). Set $(\pi_2,U_2, H)$ by
\begin{equation}\label{e-fath*}
\pi_2(a):=(\pi\times U)(a)=\pi(a), \ a\in \A;\qquad U_2:=(\pi\times U)(\hat{u})=U
\end{equation}
(cf. \eqref{e-fath}). Then by Theorem~\ref{5.4}
$$
C^*(\pi_2(\A),U_2)\cong C^*(\A,\alpha,J)
$$
and $(\pi_2,U_2, H)$ possesses property $(^*)$. Thus by Theorem~\ref{isomorphiasm theorem}
$$
C^*(\pi_1(\A),U_1)\cong C^*(\pi_2(\A),U_2).
$$
\end{Proof}
\subsection{Topological freeness}\label{ABL=Kwa-Leb} Crossed product $C^*(\A,\alpha,J)$ can also be constructed by means of the crossed product introduced in \cite{Ant-Bakht-Leb}. Here we present this  construction. It  can  be considered as one more alternative definition of $C^*(\A,\alpha,J)$.

Let $(\A,\alpha)$  be a $C^*$-dynamical system,   $J$ be an ideal orthogonal to the kernel of $\alpha$, and
 $(\pi,U,H)$ be a faithful $J$-covariant representation  of $(\A,\alpha)$. Consider the algebra
\begin{equation}\label{ext}
\B:=C^*\left(\bigcup_{n\in \N}U^{*n}\pi(\A)U^{n}\right).
\end{equation}
By Corollary~\ref{corollary with distance} algebra $\B$ can be
described in terms of  $(\A,  \alpha)$ and $J$,  and  therefore it
does not depend on the choice of a faithful $J$-covariant
representation $(\pi,U,H)$. Routine calculation (cf.
\cite[Prop. 3.10.]{Leb-Odz}) shows that
\begin{equation}\label{B-2}
U\B U^*\subset \B, \qquad U^*\B U\subset \B, \qquad  U^*U\in Z(\B).
\end{equation}
Thus $\B$ is a \emph{coefficient algebra}  in the sense of \cite{Leb-Odz}. In particular
$$
\alpha:\B\to\B, \qquad \alpha (\cdot):= U(\cdot)U^*
$$
is an endomorphism of $\B$ (identifying $\pi(A)$ with $A$ the mapping $U(\cdot)U^*$ extends the endomorphism $\al:A\to A$  and  the notational collision disappears) and
$$
\mathcal{L}:\B\to\B, \qquad \mathcal{L} (\cdot):=U^*(\cdot)U
$$
 is a \emph{complete transfer operator} in the sense of  \cite{Bakht-Leb}. Note also  that the mapping $\mathcal{L}$ is
 defined uniquely by the $C^*$-dynamical system $(\B,\alpha)$ (\cite[Theorem 2.8.]{Bakht-Leb}), and therefore it is uniquely
 defined by $(\A,\alpha)$ and $J$.
In particular,   Proposition~\ref{star property*} and \cite[Theorem 3.5.]{Ant-Bakht-Leb} imply the following
\begin{prop}\label{crossed-ABL1} We have a natural  isomorphism
\begin{equation}
\label{crossed-ABL}
C^*(\A,\alpha,J)\cong\B\times_\alpha\Z,
\end{equation}
where on the right hand side stands the crossed product of \cite[Def. 2.6.]{Ant-Bakht-Leb}.
\end{prop}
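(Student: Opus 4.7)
The plan is to realize both $C^*(\A,\alpha,J)$ and $\B\times_\alpha\Z$ as completions of the same $^*$-algebra generated by $\B$ and a partial isometry $U$, and then to conclude via the universal property of the latter combined with the faithfulness criterion of \cite[Thm.~3.5]{Ant-Bakht-Leb}. First I would fix a faithful $J$-covariant representation $(\pi,U,H)$ possessing property~$(*)$, whose existence is guaranteed by the construction in Subsection~\ref{faith-covar}. By Theorem~\ref{5.4} the integrated map $\pi\times U$ identifies $C^*(\A,\alpha,J)$ with $C^*(\pi(\A),U)$, and since $\pi(\A)\subset\B\subset C^*(\pi(\A),U)$ one trivially has $C^*(\A,\alpha,J)\cong C^*(\B,U)$.

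Next, the relations \eqref{B-2} together with the discussion preceding the statement show that $\B$ is a coefficient algebra in the sense of \cite{Leb-Odz} and that $\alpha(\cdot)=U(\cdot)U^*$ is an endomorphism of $\B$ with the unique complete transfer operator $\LL(\cdot)=U^*(\cdot)U$, depending only on $(\A,\alpha)$ and $J$. The triple $(\iota_\B,U,H)$ is therefore a covariant representation of $(\B,\alpha)$, and the universal property of $\B\times_\alpha\Z$ from \cite[Def.~2.6]{Ant-Bakht-Leb} produces a surjective $^*$-homomorphism
$$
\Phi\colon\B\times_\alpha\Z \twoheadrightarrow C^*(\B,U)\cong C^*(\A,\alpha,J).
$$

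To promote $\Phi$ to an isomorphism I would apply \cite[Thm.~3.5]{Ant-Bakht-Leb}, which supplies a faithfulness criterion for representations of $\B\times_\alpha\Z$ formulated as an analogue of property~$(*)$ with $\pi(\A)$ replaced by $\B$. The required inequality at the level of $\B$ is obtained by translation: every element of $\B$ is a norm limit of finite sums of the form $U^{*k}\pi(a)U^k$, so any expression $\sum_{m,n=0}^N U^{*m}b_{m,n}U^n$ with $b_{m,n}\in\B$ rewrites (after approximation and expansion of each $b_{m,n}$) as a sum $\sum U^{*m'}\pi(a')U^{n'}$. Regrouping the resulting terms by the exponent pair $(m',n')$ and identifying the diagonal contributions $m'=n'$ with the originally diagonal terms $m=n$, the desired inequality over $\B$ reduces to the inequality of Proposition~\ref{star property*} for $(\pi,U,H)$ as a representation of $(\A,\alpha)$.

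The main obstacle I expect is precisely this last, bookkeeping, step: carefully tracking which terms $U^{*m'}\pi(a')U^{n'}$ end up on the diagonal after expansion, and handling the required approximation since $\B$ is only the closure of the algebra generated by $\bigcup_n U^{*n}\pi(\A)U^n$. Once the property~$(*)$ inequality is verified over $\B$, \cite[Thm.~3.5]{Ant-Bakht-Leb} yields injectivity of $\Phi$ and hence \eqref{crossed-ABL}. Everything else is a direct consequence of the universal properties and of the identifications already established in Section~\ref{isomorph}.
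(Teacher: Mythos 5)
Your proposal is correct and follows essentially the same route as the paper, which disposes of the statement in one line by citing Proposition~\ref{star property*} together with \cite[Thm.~3.5]{Ant-Bakht-Leb}; you are simply filling in the details of that citation (the surjection from universality of $\B\times_\alpha\Z$, and the verification of the ABL faithfulness criterion over $\B$). Your key bookkeeping observation is sound: since $U^{*m}\bigl(U^{*k}\pi(a)U^{k}\bigr)U^{n}=U^{*(m+k)}\pi(a)U^{(n+k)}$ preserves the difference $n-m$, the diagonal over $\B$ expands exactly into the diagonal over $\pi(\A)$, so the inequality over $\B$ does reduce to property~$(*)$ for $(\pi,U,H)$ and passes to norm limits.
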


\begin{rem}
Thus the alternative construction of crossed product involve essentially two steps: 1)~extending an
\emph{irreversible} system  $(\A,\alpha)$  up to a \emph{reversible} system on $(\B,\alpha)$ (here $\mathcal{L}$ plays the role of the inverse to $\alpha$),
and then 2)~attaching the crossed product from \cite{Ant-Bakht-Leb} to the extended system.

We have to stress that the general procedure of extension of $(\A,\alpha)$ up to $(\B,\alpha)$ by means of \eqref{ext} involves the ideal $J$ but 'does not see' it explicitely and namely the material of the present paper shows that by means of these orthogonal ideals
all the possible extensions are parametrised (recall in this connection the discussion in \cite[Section 5]{Ant-Bakht-Leb}).
\end{rem}

The  isomorphism \eqref{crossed-ABL} and the results of
\cite{kwa-ck} give us a possibility to obtain one more isomorphism
theorem which can be written in terms of topological freeness of
the action $\alpha$ on $\B$. Indeed, it follows immediately from
\eqref{B-2} that
$$
\mathcal{L}:\alpha(\B)\to \mathcal{L}(\B)\,\,\,\,\textrm{ and }\,\,\,\, \alpha:\mathcal{L}(\B)\to \alpha(\B)
$$
are mutually inverse isomorphisms $\mathcal{L}(\B)=\mathcal{L}(1)\B$ is an ideal in $\B$
 and $\alpha(\B)=\alpha(1)\B\alpha(\B)$ is a hereditary subalgebra of $\B$.  Therefore, cf.  e.g. \cite[Thm 5.5.5]{Murphy},  we may naturally identify the spectra $\widehat{ \alpha(\B)}$ and $\widehat{\mathcal{L}(\B)}$  of $\alpha(\B)$ and $\mathcal{L}(\B)$ with open subsets of the spectrum $\widehat{\B}$ of $\B$, and then
\begin{equation}\label{identifications of spectra}
\widehat{ \alpha(\B)}=\{\pi \in  \widehat{\B}: \pi(\alpha(1))\neq 0\},\qquad \widehat{ \mathcal{L}(\B)}=\{\pi \in  \widehat{\B}: \pi(\mathcal{L}(1))\neq 0\}.
\end{equation}
Under the above  identifications   the dual $\widehat{\alpha}:\widehat{ \alpha(\B)} \to \widehat{ \mathcal{L}(\B)}$ to the isomorphism $\alpha:\mathcal{L}(\B)\to \alpha(\B)$ becomes a partial homeomorphism of $\B$.  More precisely, let $\pi:\B\to L(H)$  be an irreducible representation. If   $\pi(\alpha(1))\neq 0$, then
\begin{equation}\label{homeomorphism dual to delta}
\widehat{\alpha}(\pi)=\pi\circ \alpha: \B\to L(\alpha(1)H)
\end{equation}
is an irreducible representation such that $\widehat{\alpha}(\pi)(\L(1))\neq 0$. Conversely if $\pi(\mathcal{L}(1))\neq 0$, then  $\widehat{\alpha}^{-1}(\pi)=\widehat{\mathcal{L}}(\pi)$  is a unique (up to unitary equivalence) irreducible extension of the representation
$
\pi\circ \mathcal{L}: \alpha(1)\B\alpha(1)\to L(H).
$
\begin{defn}
We say that  $\widehat{\alpha}$ where $\widehat{\alpha}:\widehat{ \alpha(\B)} \to \widehat{ \mathcal{L}(\B)}$ is a homeomorphism  given by \eqref{homeomorphism dual to delta}, between the open subsets \eqref{identifications of spectra} of $\widehat{\B}$, is a \emph{partial homeomorphism dual to the endomorphism }$\alpha:\B\to \B$. We call the pair $(\widehat{\B}, \widehat{\alpha})$ the \emph{partial dynamical system dual to the $C^*$-dynamical system} $(\B,\alpha)$.
\end{defn}
We recall  that  a partial homeomorphism  of a topological space, i.e. a homeomorphism between open subsets,   is   {\em
topologically free} if  for any $n\in \N$ the set of periodic points of period $n$ has empty interior.
Applying \cite[Thm. 2.24]{kwa-ck} we arrive at the following result
\begin{thm}[\textbf{Isomorphism theorem and topologically free action}]\label{topolo disco-polo} Let $(\A,\alpha)$  be a $C^*$-dynamical system,
$J$ an ideal orthogonal to the kernel of $\alpha$, and $(\B,\alpha)$
   a partially reversible system associated to the triple $(\A,\alpha,J)$ as described above.
   If the partial homeomorphism  $\widehat{\alpha}$ dual to $\alpha:\B\to \B$ is topologically free,
   then for  any  faithful $J$-covariant representations $(\pi,U,H)$ of $(\A,\alpha)$
    the epimorphism $\pi\times U: C^*(\A,\alpha, J) \to C^*(\pi(\A),U)$ given by \eqref{e-fath} is an isomorphism:
   $$
   C^*(\A,\alpha, J) \cong C^*(\pi(\A),U).
   $$
  \end{thm}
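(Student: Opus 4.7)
The plan is to reduce the statement to the isomorphism theorem for crossed products of partially reversible $C^*$-dynamical systems proved in \cite[Thm.~2.24]{kwa-ck}, and to feed it through the identification provided by Proposition~\ref{crossed-ABL1}. By Theorem~\ref{5.4} the homomorphism $\pi\times U$ is already known to be a surjection of $C^*(\A,\alpha,J)$ onto $C^*(\pi(\A),U)$, so the whole content of the statement is \emph{injectivity}.

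First I would realise the coefficient algebra concretely. Starting from the faithful $J$-covariant representation $(\pi,U,H)$, form $\B=C^*\bigl(\bigcup_{n}U^{*n}\pi(\A)U^n\bigr)\subset L(H)$ as in \eqref{ext}. Corollary~\ref{corollary with distance} shows that the isomorphism class of $\B$, together with the endomorphism $\alpha(\cdot)=U(\cdot)U^*$ and the complete transfer operator $\mathcal L(\cdot)=U^*(\cdot)U$, depends only on $(\A,\alpha,J)$ and not on the particular representation. In particular, the inclusion $\B\hookrightarrow L(H)$ is a faithful, non-degenerate representation of $\B$, and the pair $(\mathrm{id}_\B,U)$ is a faithful covariant representation of the partially reversible system $(\B,\alpha)$.

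Next I would unpack Proposition~\ref{crossed-ABL1}. Under the canonical identification $C^*(\A,\alpha,J)\cong\B\times_\alpha\Z$, the epimorphism $\pi\times U$ corresponds precisely to the integrated form $\mathrm{id}_\B\times U$ of the covariant pair $(\mathrm{id}_\B,U)$, viewed as a representation of $\B\times_\alpha\Z$ on $H$. This matching of universal arrows is the technical step that requires some attention: one has to check that the generators of $\B\times_\alpha\Z$ (a copy of $\B$ and a single power partial isometry implementing $\alpha$ and $\mathcal L$) are sent by $\pi\times U$ to their images in $L(H)$, which is immediate from the definitions of $\B$ and of the crossed product in \cite[Def.~2.6]{Ant-Bakht-Leb}.

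Having made this identification, the hypotheses of \cite[Thm.~2.24]{kwa-ck} are in place: the representation of $\B$ is faithful, and the partial homeomorphism $\widehat{\alpha}$ of $\widehat{\B}$ dual to $\alpha:\B\to\B$ is topologically free by assumption. That theorem then guarantees that the integrated representation $\mathrm{id}_\B\times U$ is a faithful representation of $\B\times_\alpha\Z$, which via Proposition~\ref{crossed-ABL1} yields the desired isomorphism
\[
C^*(\A,\alpha,J)\;\cong\;\B\times_\alpha\Z\;\cong\;C^*(\pi(\A),U),
\]
with $\pi\times U$ the implementing map. The main obstacle is the bookkeeping in the second step, namely verifying that the two universal constructions really match on generators so that topological freeness of $\widehat{\alpha}$ on $\widehat{\B}$ (rather than some ad~hoc freeness condition directly on $\widehat{\A}$) can be invoked; once this is done, \cite[Thm.~2.24]{kwa-ck} does all the remaining work.
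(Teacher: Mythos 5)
Your proposal follows essentially the same route as the paper: the paper obtains the theorem precisely by combining the identification $C^*(\A,\alpha,J)\cong\B\times_\alpha\Z$ of Proposition~\ref{crossed-ABL1} with the isomorphism theorem \cite[Thm.~2.24]{kwa-ck} for the dual partial dynamical system $(\widehat{\B},\widehat{\alpha})$ under topological freeness. Your additional bookkeeping (matching generators, faithfulness of the induced representation of $\B$, and surjectivity via Theorem~\ref{5.4}) correctly fills in the details the paper leaves implicit.
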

  A crucial and (in the noncommutative case) still open question  is the following.
\begin{quote}
     \textbf{Problem:} How to express the topological freeness of the partial reversible dynamical system $(\widehat{\B},\widehat{\alpha})$ dual to $(\B,\alpha)$ in terms of the initial $C^*$-dynamical system $(\A,\alpha)$ and the ideal $J$?
\end{quote}

\subsection{Crossed product overview}

One can see that the most popular   crossed products by endomorphisms coincide with   $C^*(\A,\alpha,J)$ for certain  $J$.   Table 1  presents the corresponding  juxtaposition of the  objects chosen.

\begin{table}[htb]
\begin{center}
        \begin{tabular}{|c|c|c|c|} \hline
            N. & endomorphism $\alpha:\A\to \A$ & $J \triangleleft \A$ & $C^*(\A,\alpha,J)$
            \\ \hline
            1. & automorphism    & $J=(\ker\alpha)^\bot=\A$                 & classical unitary
            \\ &                 &                                        & crossed product
            \\ \hline
            2. & monomorphism   & $J=(\ker\alpha)^\bot=\A$                 & isometric crossed product
            \\ &                &                                         &    \cite{Paschke}, \cite{Murphy}
            \\ \hline
            3. & $\ker\alpha$ unital and     & $J=(\ker\alpha)^\bot$                 & crossed product using
            \\ &  $\quad \alpha(\A)$ hereditary in $\A\quad $    &                     & complete transfer operator
            \\ &                                               &                     & \cite{Ant-Bakht-Leb}
            \\ \hline
          4. & $\ker\alpha$ unital and   & $J=(\ker\alpha)^\bot$                 & covariance algebra \cite{kwa}
            \\ & $\A$ commutative        &                                       &
            \\ \hline
            5. & arbitrary  & $J=\{0\}$                 & partial-isometric
            \\ &            &                           & crossed product   \cite{Lin-Rae}
            \\ \hline
            6. & arbitrary  & $ \{0\}\subset  J \subset (\ker\alpha)^\bot$       & partial-isometric
            \\ &            &                                                    & crossed product
             \\&            &                                                 & $C^*(\A,\alpha,J)$
             \\ &            &                                                 &of the present article,
            \\ \hline
        \end{tabular}
        \caption{Different crossed products}\label{table 1}
\end{center}
\end{table}

To see the coincidence in N.3 of Table 1 we refer the reader to
\cite[Prop. 2.6]{kwa4}. The crossed product N.6
(Definitions~\ref{crossed product defn} and~\ref{cr-pr-def}) is
the most general in the sense that it gives all the remaining ones
for an appropriate choice of $J$; namely $J=(\ker\alpha)^\bot$ for N.1-4
and $J=\{0\}$ for N.5.

As it is shown in \cite[Section 4]{Ant-Bakht-Leb} the crossed product N.3  of Table 1 covers a lot of most popular crossed product constructions, and in particular two kinds of  crossed products introduced by R. Exel in \cite{exel1} and   \cite{exel2}   may be obtained from  the crossed product N.3.

Note also that  there are a number of crossed products that at first sight are not of type $C^*(\A,\alpha,J)$
and are related to ideals that  are not orthogonal to $\ker\alpha$. As an example let us mention Stacey's (multiplicity one) crossed product \cite[Defn. 3.1]{Stacey}. For the sake of simplicity we state his definition in a unital setting, cf. \cite{Adji_Laca_Nilsen_Raeburn}.
\begin{defn}\label{Stacey}
\emph{Stacey's crossed product} for an
endomorphism $\alpha$ of a unital $C^*$-algebra $\A$ is a unital
$C^*$-algebra $B$ together with a unital $^*$-homomorphism
$i_A:A\to B$ and an isometry $u\in B$ such that
\begin{itemize}
\item[i)] $i_\A(\alpha(a))=u\, i_A(a)u^*$ for all $a\in \A$
\item[ii)] $B$ is generated by $i_A(A)$ and $u$
\item[iii)] for every non-degenerate representation $\pi:A\to L(H)$ and an isometry $T\in L(H)$ there is a representation $\pi\times T:B\to L(H)$ such that $(\pi\times T)\circ i_\A=\pi$ and $(\pi\times T)(u)=T$.
\end{itemize}
\end{defn}

The mapping $a\to uau^*$ for an isometry $u$ is injective.
Therefore, in view of  condition i), the homomorphism $i_\A$  can
not be injective unless  $\alpha$ is a monomorphism. Hence in
general $A$ does not embeds into the Stacey's crossed product.
In particular, see \cite[Prop. 2.2]{Stacey}, Stacey proved that $B$
degenerates to $\{0\}$ if and only if the inductive limit of
the inductive sequence
$\A\stackrel{\alpha}{\rightarrow}\A\stackrel{\alpha}{\rightarrow}
...$ degenerates to zero. We will refine this result in Example \ref{reduction
of Stacey's crossed product} below.
Actually we show  that Stacey's crossed product can also be  presented in
the form of $C^*(\A,\alpha,J)$ for an appropriate $\A,\alpha$ and
$J$, but to achieve this one first  needs to 'reduce' the initial
$C^*$-dynamical system. The general procedure of such a reduction is
discussed in the forthcoming part of the paper where we also
analyse  the relation between the crossed products
$C^*(\A,\alpha,J)$ and  relative Cuntz-Pimsner algebras.

\section{Crossed products and relative Cuntz-Pimsner
algebras}\label{C-P}

In this section we start to discuss  relations between the
crossed products introduced and  relative Cuntz-Pimsner algebras.
This requires a description of a series of known objects
and results and we do this job in Subsections \ref{preliminaries
on C-correspondences}, \ref{The Toeplitz C*-algebra of a Hilbert
bimodule} and \ref{Relative Cuntz-Pimsner algebras}, while a
presentation of the crossed products as  relative Cuntz-Pimsner
algebras is given in Subsection~\ref{Crossed prod =Relative
Cuntz-Pimsner algebras}. To begin with we recall  the basic
necessary objects related to Hilbert $C^*$-modules and $C^*$-correspondences. A general
information on Hilbert $C^*$-modules  can be found, for example, in \cite{lance}. The term $C^*$-correspondence was popularized, among the others, by T. Katsura \cite{katsura1}, \cite{katsura}, \cite{katsura2}.

\subsection{$C^*$-correspondences and their representations}\label{preliminaries on C-correspondences}
Let  $\A$ be a (not necessarily unital)   $C^*$-algebra and $X$ a
right Hilbert $\A$-module with an  $\A$-valued inner product
$\langle \cdot,\cdot \rangle_\A $, see \cite[ch. 1]{lance}. We denote  by
$\LL(X)$  the $C^*$-algebra of adjointable operators  on $X$. For
$x,y \in X$,  we let  ${\Theta}_{x,y}\in \LL(X)$ be the
'one-dimensional operator': ${\Theta}_{x,y}(z)=x \cdot \langle
y,z\rangle_\A$, and  we denote by $\KK(X)$  the ideal of 'compact
operators'
   in $\LL(X)$
which is a closed linear span of  the operators ${\Theta}_{x,y}$,
$x,y\in X$. We recall that any $C^*$-algebra $\A$ can be naturally
treated as a right Hilbert $\A$-module where $\langle a,
b\rangle_\A=a^*b$ and then $\KK(\A)=\A$ and $\LL(\A)=M(\A)$ is the
multiplier algebra of $\A$.

\begin{defn}
\emph{A $C^*$-correspondence $X$ over a $C^*$-algebra $\A$} is  a (right) Hilbert $\A$-module equipped with a homomorphism $\phi:\A \to \LL(X)$.
 We refer to $\phi$ as the left action of  a $\A$ on  $X$ and  write
\begin{equation}\label{left}
 a\cdot x := \phi(a)x.
 \end{equation}
\end{defn}
\begin{rem}
\label{Hilbbert-bim} A $C^*$-correspondence is also sometimes
called a Hilbert bimodule, see e.g. \cite{p}, \cite{fr},
\cite{fmr}. However, there are  plenty of reasons, see e.g.
\cite{aee}, \cite{katsura1}, \cite{kwa-doplicher} or \cite{kwa3},   that the term
\emph{Hilbert bimodule} should be reserved for a special sort of
$C^*$-correspondence, namely a $C^*$-correspondence $X$ with an
additional structure which is an $\A$-valued  sesqui-linear form
${_A\langle} \cdot , \cdot \rangle$ such that
$$
 x \cdot \langle y ,z \rangle_A = {_A\langle} x , y  \rangle \cdot z, \qquad \textrm{for all}\,\,\, x,y,z\in X.
$$
 Then, see  \cite[Lem 3.4]{katsura1} or \cite[Prop. 1.11]{kwa-doplicher},   $X$ is both a left and a right Hilbert $\A$-module and $\|\langle x ,x \rangle_A\| = \|{_A\langle} x , x  \rangle \|$.
  \end{rem}
  \begin{defn}
  A {\em representation} $(\pi,t,B)$ of a $C^*$-correspondence
$X$  in a
$C^*$-algebra
$B$ consists of a linear map $t:X\to B$
and a homomorphism $\pi:A\to B$ such that
\begin{equation}\label{c*-corr}
t(x\cdot a) = t(x)\pi(a),\quad t(x)^*t(y)= \pi(\langle
x,y\rangle_A),\quad t(a\cdot x) = \pi(a)t(x),
\end{equation}
for $x,y\in X$ and $a\in A$. If $\pi$ is faithful (then
automatically $t$ is isometric, cf.  \cite[Rem 1.1]{fr}) we  say
that the  representation  $(\pi,t,B)$  is \emph{faithful}. If
$B=L(H)$ for  a Hilbert space $H$ we say that   $(\pi,t,L(H))$ is
a \emph{representation of $X$ in} $H$.
\end{defn}
\begin{rem}
The above introduced notion  is called in \cite{fr}, \cite{fmr} a
\emph{Toeplitz representation} of $X$, and   in
\cite{ms} it is called an \emph{isometric covariant representation} of $X$.
\end{rem}
  Any representation $(\pi,t,B)$ of a $C^*$-correspondence $X$ in a $C^*$-algebra $B$
  naturally give rise to a representation  ${(\pi,t,B)}^{(1)}: \KK(X)\to B$  of the $C^*$-algebra $\KK(X)$ of 'compact operators'
  on $X$ which is uniquely determined by the condition that
\begin{equation}\label{induced representation on K(X)}
{(\pi,t,B)}^{(1)}({\Theta}_{x,y}):=t(x){t(y)}^*\,\,\,\text{ for } \,\,\,x,y\in
X,
\end{equation}
see \cite[Prop.~1.6]{fr} or \cite[Prop. 3.13]{kwa-doplicher}.
Moreover the left action $\phi:\A\to \LL(X)$ restricted to the
ideal
$$
J(X):={\phi}^{-1}(\KK(X))
$$
is a representation of $J(X)$ in $\KK(X)$ and it is of a
particular interest to understand the relationship between
$\pi:J(X)\to B$ and  $(\pi,t,B)^{(1)} \circ \phi:J(X)\to B$.
\begin{defn}
  For any    ideal $J$ contained in $J(X)$ a representation $(\pi,t,B)$ of $X$ is said to be {\em covariant on} $J$  or \emph{$J$-covariant} if
\begin{equation*}
{(\pi,t,B)}^{(1)}(\phi(a))=\pi(a)\quad\text{for all \ } a\in J.
\end{equation*}
Actually,   the set $ \{a\in J(X):
{(\pi,t,B)}^{(1)}(\phi(a))=\pi(a)\}$
 is  the biggest ideal on which $(\pi,t,B)$  is covariant and  we will call this ideal
  an \emph{ideal of covariance for} $(\pi,t,B)$.
\end{defn}
\begin{rem} What we call above covariant represenations was originally called by Muhly and Solel coisometric representations, cf.  \cite{ms}, \cite{fmr}. This name was motivated by specicif applications and examples, and therefore  we choose, following Katsura \cite{katsura1}, \cite{katsura},  more universal (neutral) name - covariance.
\end{rem}
\begin{rem}\label{orthogonality remark}
Plainly, for  \emph{$J$-covariant} representation $(\pi,t,B)$
the ideal $\ker\phi \cap J$ is contained in $\ker\pi$. Hence a
necessary condition for the existence of a faithful
\emph{$J$-covariant} representation of $X$ is that $J$ is
orthogonal to $\ker\phi$ (by Proposition \ref{injectivity of k_A}
below this condition is also sufficient).
 \end{rem}
\subsection{The Fock representation }\label{The Toeplitz C*-algebra of a Hilbert bimodule}

The original idea of Pimsner \cite{p}, see also \cite{ms}, \cite{fr}, \cite{katsura}, is to construct representations of $C^*$-correspondences by a natural adaptation of the  celebrated Hilbert space construction introduced by Fock. Namely, given a $C^*$-correspondence  $X$ over $\A$, for $n\ge 1$, the
$n$-fold internal tensor product $X^{\otimes n} :=
X\otimes_\A\dotsm\otimes_\A X$, see e.g. \cite[ch. 4]{lance}, is  a
$C^*$-correspondence where $\A$ acts on the left by
$$
\phi^{(n)}(a)(x_1\otimes_\A\dotsm\otimes_\A x_n) :=
(a\cdot x_1)\otimes_\A\dotsm\otimes_\A x_n;
$$
here $a\cdot x_1$ is given by \eqref{left}.
 For $n=0$, we take $X^{\otimes 0}$
to be the Hilbert module
$\A$ with left action
$\phi^{(0)}(a) b: = ab$. Then the Hilbert-module direct sum, see \cite[p. 6]{lance},
$$
\FF(X) :=
\bigoplus_{n=0}^\infty X^{\otimes n}
$$ carries a diagonal left action $\phi_\infty$
of $\A$ in which $\phi_\infty(a)(x):=\phi^{(n)}(a)x$ where  $x\in
X^{\otimes n}$.  The
$C^*$-correspondence $\FF(X)$ is called
 the \emph{Fock space} over  the $C^*$-correspondence  $X$.
For each  $x\in X$, we
define a \emph{creation operator\/}
$T(x)$  on $\FF(X)$ by
$$
T(x)y=\begin{cases}
 x\cdot y
    & \text{if $y\in X^{\otimes 0}=\A$} \\
  x\otimes_\A y
    & \text{if $y\in X^{\otimes n}$ for some $n\geq 1$;} \\
\end{cases}
$$
routine calculations show that $T(x)$ is adjointable and its adjoint is the  \emph{annihilation operator}
$$
T(x)^*z=\begin{cases}
 0
    & \text{if $z\in X^{\otimes 0}=\A$} \\
  \langle x, x_1\rangle_\A\cdot y
    & \text{if $z=x_1\otimes_\A y\in X\otimes_\A X^{\otimes n-1}=
X^{\otimes n}$.}
\\
\end{cases}
$$
One sees  that $T:X\to\LL(\FF(X)) $ is an injective linear mapping and since $\A$ is a summand of $\FF(X)$, the map
 $\phi_\infty:\A \to \LL(\FF(X))$ is injective as well. Actually,  $(\phi_\infty,T, \LL(\FF(X)))$ is a
 faithul representation of $X$ whose ideal of covariance is $\{0\}$. 
\begin{defn}
The  representation   $(\phi_\infty, T, \LL(\FF(X)))$  of $X$ in
$\LL(\FF(X))$ defined above  is called \emph{Fock representation}
and the \emph{Toeplitz $C^*$-algebra} $\TT(X)$ of $X$ is by
definition the $C^*$-subalgebra of $\LL(\FF(X))$ generated by
$\phi_\infty(\A)\cup T(X)$, cf. \cite[Def. 2.4]{ms}, \cite[Def.
1.1]{p}.
 \end{defn}

\subsection{Relative Cuntz-Pimsner algebras $\OO(X,J)$}\label{Relative Cuntz-Pimsner algebras}
Composing the Fock representation $(\phi_\infty, T, \LL(\FF(X))$
of $X$ with the  quotient map one can get a representation of  $X$
whose ideal of covariance is an arbitrarily chosen ideal
contained in $J(X)={\phi}^{-1}(\KK(X))$. Namely, let  $J$ be an
ideal in $J(X)$ and let $P_0$ be the projection in $\LL(\FF(X))$
 that maps $\FF(X)$ onto the first summand $X^{\otimes 0}=\A$.
 One can  show \cite[Lem. 2.17]{ms} that $\phi_\infty(J)P_0$  is contained in $\TT(X)$.
 We will  write $\JJ(J)$ for the ideal in $\TT(X)$ generated by $\phi_\infty(J)P_0$.

 \begin{defn}[\cite{ms}, Def. 2.18]
If $X$ is a $C^*$-correspondence  over a $C^*$-algebra $\A$, and
if $J$ is an ideal in $J(X)={\phi}^{-1}(\KK(X))$ we denote by
$\OO(J,X)$ the quotient algebra $\TT(X)/\JJ(J)$ and call it
\emph{relative Cuntz-Pimsner algebra} determined by $J$.
 \end{defn}
The  algebras $\OO(J,X)$ are characterized by the following universal property.
\begin{prop}[\cite{fmr}, Prop. 1.3]\label{RCP algebra}
Let $X$ be a $C^*$-correspondence over $\A$, and let $J$ be an
ideal in $J(X)$. Let $q:\TT(X)\to \OO(J,X)$ be the quotient map
and put
$$
 k_\A= q\circ \phi_\infty  \quad\textrm{ and } \quad k_X=q \circ T.
$$
 Then  $(k_\A,k_X, \OO(J,X))$ is  a representation of $X$ which is covariant on $J$ and satisfies:
\begin{itemize}
\item[(i)] for every  representation $(\pi,t)$ of $X$ which is covariant on $J$, there is a homomorphism $\pi{\times}_J t$ of $\OO(J,X)$ such that
$$(\pi{\times}_J t)\circ k_\A=\pi \quad \textrm{  and } \quad (\pi{\times}_J t)\circ k_\A=t,$$
\item[(ii)] $\OO(J,X)$ is generated as a $C^*$-algebra by $k_X(X)\cup k_\A(\A)$.
\end{itemize}
The representation $(k_\A,k_X,\OO(J,X))$ is unique in the following sense: if $(k_\A',k_X',B)$ has similar properties, there is an isomorphism $\theta:\OO(J,X)\to B$ such that $\theta\circ k_X=k_X'$ and $\theta\circ k_\A=k_\A'$. In particular, there is a strongly continuous gauge action $\gamma:\T\to\Aut\OO(J,X)$ where  ${\gamma}_z(k_\A(a))=k_\A(a)$ and ${\gamma}_z(k_X(x))=zk_X(x)$ for $a\in \A,x\in X$.
\end{prop}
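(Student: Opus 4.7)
The plan is to derive the proposition from two separate inputs: first, the universal property of the Toeplitz algebra $\TT(X)$ for arbitrary representations of $X$, and second, a transparent identity that turns the $J$-covariance condition into a statement about the kernel of the canonical quotient map $q:\TT(X)\to\OO(J,X)$. The key technical lemma underlying everything is the following computation on the Fock space: for $a\in J\subset J(X)$, if $\phi(a)=\sum_{i}\Theta_{x_i,y_i}\in\KK(X)$, then
$$
\phi_\infty(a)P_0 \;=\; \phi_\infty(a)\;-\;\sum_{i}T(x_i)T(y_i)^{*} \;=\; \phi_\infty(a)\;-\;(\phi_\infty,T)^{(1)}(\phi(a)).
$$
To verify this identity I would simply evaluate both sides on the summands $X^{\otimes n}$ of $\FF(X)$. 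On $X^{\otimes 0}=\A$ the right-hand side is just $\phi_\infty(a)$ (since $T(y)^*$ kills $\A$), while on $X^{\otimes n}=X\otimes_\A X^{\otimes n-1}$ for $n\geq 1$ the two expressions $\phi(a)\otimes \mathrm{id}$ and $\sum_iT(x_i)T(y_i)^*$ coincide by the definition of $\Theta_{x_i,y_i}$, so their difference is supported precisely on $X^{\otimes 0}$, i.e.\ equals $\phi_\infty(a)P_0$.

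With this identity in hand I would argue as follows. That $(k_\A,k_X,\OO(J,X))$ is a representation of $X$ is immediate since it is obtained from the Fock representation by composing with the $^*$-homomorphism $q$. Its $J$-covariance follows from the identity above: for $a\in J$, the element $\phi_\infty(a)-(\phi_\infty,T)^{(1)}(\phi(a))=\phi_\infty(a)P_0$ lies in $\JJ(J)$, hence is annihilated by $q$, giving $k_\A(a)=(k_\A,k_X)^{(1)}(\phi(a))$. For property (i), I would invoke the known universal property of the Toeplitz algebra \cite{p,fr}: any representation $(\pi,t,B)$ of $X$ integrates to a homomorphism $\pi\times t:\TT(X)\to B$ with $(\pi\times t)\circ\phi_\infty=\pi$ and $(\pi\times t)\circ T=t$. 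Applying this homomorphism to the identity above gives, for $a\in J$,
$$
(\pi\times t)(\phi_\infty(a)P_0)\;=\;\pi(a)-(\pi,t)^{(1)}(\phi(a)),
$$
which vanishes precisely when $(\pi,t)$ is $J$-covariant. Hence $\JJ(J)\subset\ker(\pi\times t)$, so $\pi\times t$ descends to the required $\pi\times_J t:\OO(J,X)\to B$. Property (ii) is immediate from the corresponding property of $\TT(X)$ and surjectivity of $q$.

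Uniqueness of $(k_\A,k_X,\OO(J,X))$ up to canonical isomorphism is the standard abstract nonsense argument: if $(k'_\A,k'_X,B)$ has the same universal property, the two representations produce mutually inverse morphisms between $\OO(J,X)$ and $B$ that agree with the universal data, and these compose to the identity on generators. For the gauge action, I would apply the universal property itself: for each $z\in\T$ the pair $(k_\A,zk_X)$ is readily checked to be a $J$-covariant representation of $X$ (the inner product identity absorbs $z\bar z=1$, and $(k_\A,zk_X)^{(1)}(\Theta_{x,y})=zk_X(x)\bar z k_X(y)^{*}=k_X(x)k_X(y)^{*}$, so $J$-covariance is preserved), yielding a homomorphism $\gamma_z:\OO(J,X)\to\OO(J,X)$ with $\gamma_z\circ k_\A=k_\A$ and $\gamma_z\circ k_X=zk_X$; the map $z\mapsto\gamma_z$ is then a group homomorphism by uniqueness, and strong continuity follows at once on generators and extends by an $\varepsilon/3$ argument using $\|\gamma_z\|=1$.

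The main obstacle is the Fock-space identity for $\phi_\infty(a)P_0$; once this bridge between the abstract ideal $\JJ(J)$ and the covariance condition is made explicit, every other step is either formal or an appeal to the already-established universal property of $\TT(X)$.
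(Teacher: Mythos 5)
The paper offers no proof of this statement: it is imported verbatim (citation \cite{fmr}, Prop.~1.3) and used as a black box, so there is no internal argument to compare yours against. Measured against the standard proof in the literature, your argument is correct and is essentially that proof. The Fock-space identity $\phi_\infty(a)P_0=\phi_\infty(a)-(\phi_\infty,T)^{(1)}(\phi(a))$ for $a\in J(X)$ is exactly the content of \cite[Lem.~2.17]{ms}, which the paper itself invokes one line earlier to justify that $\phi_\infty(J)P_0\subset\TT(X)$; your verification of it summand-by-summand on $\FF(X)$ is right, and it is indeed the bridge that makes both the covariance of $(k_\A,k_X)$ and the factorization of $\pi\times t$ through $\JJ(J)$ one-line consequences of the universal property of $\TT(X)$. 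Two small points of hygiene: (a) you write $\phi(a)=\sum_i\Theta_{x_i,y_i}$ as a finite sum, whereas a general element of $\KK(X)$ is only a norm limit of such sums; this is harmless because $(\phi_\infty,T)^{(1)}$ and $(\pi,t)^{(1)}$ are continuous on $\KK(X)$, but the limit should be acknowledged. (b) In the uniqueness and gauge-action steps you implicitly use that $(k'_\A,k'_X)$ and $(k_\A,zk_X)$ are covariant \emph{on} $J$ (not on a larger ideal), which is all that the universal property (i) requires — your check that $(k_\A,zk_X)^{(1)}=(k_\A,k_X)^{(1)}$ on $\KK(X)$ covers this. (Incidentally, the second displayed identity in item (i) of the statement as printed contains a typo, $(\pi\times_J t)\circ k_\A=t$ should read $(\pi\times_J t)\circ k_X=t$; your proof proves the corrected version.)
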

\begin{rem}\label{ideal of covariance vs universality}
One may show, cf. \cite[Prop. 4.7]{kwa-doplicher}, that the ideal of
coisomtetricity for the universal representation $(k_X,k_\A,
\OO(J,X))$ coincides with $J$. Hence $(k_X,k_\A, \OO(J,X))$ could
be equivalently defined as a universal triple with respect to
representations whose ideal of covariance not only contains
but actually equals $J$.
\end{rem}
\begin{table}[hbt]
\begin{center}
        \begin{tabular}{|c|c|c|c|} \hline
            N. &  $\phi:A\to \L(X)$ & $J \triangleleft J(X)$ & $\OO(J,X)$
            \\ \hline
            1. & monomorphism    & $J=J(X)$                 & Cuntz-Pimsner algebra of $X$
            \\ &                 &                                        & \cite{p}, \cite{fmr}
            \\ \hline
            2. & arbitrary   & $ J=\{0\}$                 & Toeplitz algebra of $X$
            \\ &                &                                         &    \cite{fr}, \cite{fmr}
            \\ \hline
            3. & arbitrary     & $J=(\ker\phi)^\bot\cap J(X)$                 & Katsura's algebra of $X$
          \\
               &    &   & \cite{katsura1}, \cite{katsura}, \cite{katsura2}
                       \\ \hline
          4. & $\phi(J)=\KK(X)$   & $J=(\ker\phi)^\bot\cap J(X)$    & crossed product by
          \\
              &                                  &                      &   the Hilbert bimodule
           \\
               &    &   & \cite{aee}
                       \\ \hline
        \end{tabular}
        \caption{Different  relative Cuntz-Pimsner algebras \label{table 2}}
\end{center}
\end{table}
Table \ref{table 2}  presents a   juxtaposition of various relative Cuntz-Pimnser algebras studied in the literature obtained from $\OO(J,X)$ for different choice of the ideal $J$. To see the coincidence in N.4 of Table \ref{table 2} we refer the reader, for instance, to \cite{katsura1}. In view of the following proposition,  the  algebra $\A$ embeds into $\OO(J,X)$ for all the algebras presented in the Table \ref{table 2}.

\begin{prop}[\cite{ms}, Prop.~2.21]\label{injectivity of k_A}
Let $X$ be a $C^*$-correspondence  over $\A$ and let
$(k_X,k_\A,\OO(J,X))$ be the  relative Cuntz-Pimsner algebra
associated with $J$. Then $k_\A:\A\to\OO(J,X)$ is injective if and
only if
\begin{equation}\label{orthogonal property}
\ker\phi \cap J =\{0\}.
\end{equation}
In particular, cf. Remark \ref{orthogonality remark}, a faithful $J$-covariant representation of $X$ exists if and only if $J$ is orthogonal to $\ker\phi$.
\end{prop}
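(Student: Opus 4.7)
The necessity is immediate: since $(k_\A, k_X, \OO(J,X))$ is $J$-covariant by Proposition \ref{RCP algebra}, every $a \in \ker\phi \cap J$ satisfies
$$k_\A(a) = {(k_\A, k_X, \OO(J,X))}^{(1)}(\phi(a)) = 0,$$
and injectivity of $k_\A$ therefore forces $\ker\phi \cap J = \{0\}$, in accordance with Remark \ref{orthogonality remark}.

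For the sufficiency, assume $\ker\phi \cap J = \{0\}$. The plan is to exhibit a faithful representation $(\pi, t)$ of $X$ in some $L(H)$ that is covariant on $J$: by Proposition \ref{RCP algebra}(i) it then integrates to a homomorphism $\pi \times_J t : \OO(J,X) \to L(H)$ with $\pi = (\pi \times_J t) \circ k_\A$, so faithfulness of $\pi$ will force $k_\A$ to be injective. To construct such $(\pi, t)$, I would realize the Fock representation concretely on a Hilbert space: fix a faithful non-degenerate $\rho : \A \to L(H_0)$, set $H := \bigoplus_{n \ge 0}(X^{\otimes n} \otimes_\rho H_0)$, and form the induced Fock-type representation $(\rho_\infty, t_\infty)$ on $H$, which is faithful but only $\{0\}$-covariant. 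Passing to the quotient by the ideal generated by the covariance defects $\{\rho_\infty(a) - {(\rho_\infty, t_\infty)}^{(1)}(\phi(a)) : a \in J\}$ delivers the desired $J$-covariance; the task then reduces to showing that $\rho_\infty$ remains injective on $\A$ modulo that ideal, or equivalently, working inside $\TT(X)$, that $\phi_\infty^{-1}(\JJ(J)) = \{0\}$.

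The key technical step is a compression/grading analysis. The gauge action $\gamma : \T \to \Aut \TT(X)$ leaves $\JJ(J)$ invariant, so averaging yields a faithful conditional expectation onto the gauge-invariant core and reduces matters to elements of the core. Letting $P_n$ denote the projection of $\FF(X)$ onto $X^{\otimes n}$, one has $P_0 \TT(X) P_0 = \phi_\infty(\A) P_0$, from which a direct computation gives
$$P_0 \JJ(J) P_0 = \phi_\infty(J) P_0;$$
hence $\phi_\infty(a) \in \JJ(J)$ already forces $a \in J$. At level $n \ge 1$ one shows analogously that $P_n \JJ(J) P_n$ is contained in the closed linear span of the rank-one operators $\Theta_{x \cdot j, y}$ on $X^{\otimes n}$ with $j \in J$ and $x, y \in X^{\otimes n}$, and combining this with the orthogonality hypothesis $\ker\phi \cap J = \{0\}$ (which renders $\phi|_J$, and hence each $\phi^{(n)}|_J$, isometric) ultimately forces $a = 0$.

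The main obstacle is executing the level-$n$ compression cleanly and uniformly across all $n$: one must keep track of how $J$ interacts with $\phi^{(n)}$ and with the induced inner products on successive tensor powers of $X$. The orthogonality hypothesis is precisely the property that propagates through all tensor powers to deliver $\phi_\infty^{-1}(\JJ(J)) = \{0\}$, and once that is secured the construction of the faithful $J$-covariant representation, and hence the injectivity of $k_\A$, follows.
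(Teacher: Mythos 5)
The paper offers no proof of this statement at all --- it is imported verbatim from Muhly--Solel \cite{ms} --- so your proposal can only be measured against the standard argument rather than against anything in the text. Your necessity half is correct, and so is the skeleton of the sufficiency half: the reduction to showing $\phi_\infty^{-1}(\JJ(J))=\{0\}$, the identity $P_0\JJ(J)P_0=\phi_\infty(J)P_0$ (whence $a\in J$), and the inclusion of $P_n\JJ(J)P_n$ in $\clsp\{\Theta_{\xi\cdot j,\eta}\}=\KK(X^{\otimes n}J)$. The final step, however, contains a genuine gap, in two respects. First, the level-wise compression data ``$a\in J$ and $\phi^{(n)}(a)\in\KK(X^{\otimes n}J)$ for all $n$'' does \emph{not} force $a=0$: for the identity correspondence $X=\A$ over a unital $\A$ with $J=\A$ one has $\ker\phi\cap J=\{0\}$ and \emph{every} $a\in\A$ satisfies all of these compressed conditions, yet $k_\A$ is injective there. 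Membership of the diagonal operator $\phi_\infty(a)$ in the ideal $\JJ(J)$ is strictly stronger than membership of each compression in the compressed ideal, so no argument resting solely on the compressions $P_n(\cdot)P_n$ can close the proof. Second, your auxiliary claim that $\ker\phi\cap J=\{0\}$ renders every $\phi^{(n)}|_J$ isometric is false, because interior tensor powers can degenerate: for $\A=\C^2$, $X=\C$ with $x\cdot(a_1,a_2)=xa_1$, $\langle x,y\rangle_\A=(\bar{x}y,0)$ and $\phi(a_1,a_2)x=a_2x$, one gets $X^{\otimes 2}=\{0\}$, so $\phi^{(2)}$ annihilates the nonzero ideal $J=0\oplus\C$ even though $J$ is orthogonal to $\ker\phi=\C\oplus 0$.

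The missing ingredient is the identification $\JJ(J)=\KK(\FF(X)J)$: each generator $T_\xi\phi_\infty(j)P_0T_\eta^*$ is precisely the ``rank-one'' operator $\Theta_{\xi j,\eta}$ on the Fock module. This supplies two facts simultaneously: (i) $\phi_\infty(a)\FF(X)\subset\FF(X)J$, so by \eqref{XI equation} the element $b_\zeta:=\langle\phi^{(n)}(a)\zeta,\phi^{(n)}(a)\zeta\rangle_\A$ lies in $J$ for every $\zeta\in X^{\otimes n}$; and (ii) $\phi_\infty(a)$ is a compact operator on $\FF(X)$, so $\|\phi^{(n)}(a)\|=\|P_n\phi_\infty(a)P_n\|\to 0$. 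Now, since $\phi|_J$ is isometric, one computes $\|\phi^{(n+1)}(a)(\zeta\otimes y)\|^2=\|\langle y,\phi(b_\zeta)y\rangle_\A\|$ and, taking suprema over $\|y\|\le 1$ and $\|\zeta\|\le 1$, obtains $\|\phi^{(n+1)}(a)\|\ge\|\phi^{(n)}(a)\|$; together with $\|\phi^{(1)}(a)\|=\|a\|$ for $a\in J$ this gives $\|\phi^{(n)}(a)\|=\|a\|$ for all $n\ge 1$, which contradicts (ii) unless $a=0$. So your compression scheme is the right opening move, but it must be supplemented by the compactness of $\JJ(J)$ on the Fock module and by this inductive norm propagation along the range condition $\phi^{(n)}(a)X^{\otimes n}\subset X^{\otimes n}J$; the blanket isometricity of $\phi^{(n)}|_J$ that you invoke instead is not available.
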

The analysis presented in the forthcoming sections will show, in
particular, that  all the relative Cuntz-Pimsner algebras are in
fact the algebras $\OO(J,X)$ where $J\subset (\ker\phi)^{\bot}$
(see Theorem~\ref{reduction thm}). Moreover, see Theorem
\ref{Katsuras canonical theorem} below, they all can be modeled by
the  algebra N.3 in  Table 2 or even by the algebra N.4, see
\cite[Thm. 3.1]{aee}, but in the latter case the passage is highly
non-trivial (see also Remark~\ref{6.7}).

\subsection{Crossed products as relative Cuntz-Pimsner algebras}\label{Crossed prod =Relative Cuntz-Pimsner algebras}

Let $(\A,\alpha)$ be a $C^*$-dynamical system  and define the structure of a  $C^*$-correspondence  over $\A$ on the space
$$
X:=\alpha(1) \A
$$
 by
\begin{equation}
\label{e-bim-a1}
a \cdot x :=\alpha(a)x,
\qquad
x\cdot a:= xa,
\qquad
\langle x,y\rangle_\A:=x^*y.
\end{equation}
Then one easily  checks that $J(X)=\A$  and  $\ker\alpha =\ker\phi$. We will say that $X$ is the \emph{$C^*$-correspondence of the $C^*$-dynamical system}  $(\A,\alpha)$. The proof of the foregoing proposition  in essence follows the  argument from \cite[Exm. 1.6]{fmr}.
\begin{prop}\label{universality proposition}
Let $X$ be  the $C^*$-correspondence of a $C^*$-dynamical system
$(\A,\alpha)$. The relations
\begin{equation}\
\label{e-bim-a4} U:=t(\alpha(1))^*, \qquad { and}\qquad t(x):=
U^*\pi(x)
\end{equation}
establish a one-to-one correspondence between representations
$(\pi,U,H)$  of $(\A,\alpha)$ and  representations $(\pi,t,L(H))$
of $X$, under this correspondence the  ideal  of covariance
for $(\pi,t,L(H))$ coincides with  the ideal of covariance for
$(\pi,U,H)$.
\end{prop}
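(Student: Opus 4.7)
The plan is to produce a bijection by verifying that the two formulas in \eqref{e-bim-a4} yield well-defined representations in each direction, that they are mutually inverse, and that they preserve the ideal of covariance. The main algebraic inputs are (a) that covariance of $(\pi,U,H)$ applied to $1$ gives $UU^*=\pi(\alpha(1))$, and (b) Lemma \ref{iteration of representations}, which gives $U^*U\in\pi(\A)'$. Neither axiom of a $C^*$-correspondence representation requires $U$ to be an isometry, so the partial-isometry setting is genuinely compatible.

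Starting with a representation $(\pi,U,H)$ of $(\A,\alpha)$, I will define $t(x):=U^*\pi(x)$ for $x\in X=\alpha(1)\A$ and verify the three identities \eqref{c*-corr}. The right module identity $t(x\cdot a)=t(x)\pi(a)$ is immediate. For the inner product identity, $t(x)^*t(y)=\pi(x^*)UU^*\pi(y)=\pi(x^*)\pi(\alpha(1))\pi(y)=\pi(x^*\alpha(1)y)=\pi(x^*y)$, using $UU^*=\pi(\alpha(1))$ and $x\in\alpha(1)\A$. For the left module identity, $t(a\cdot x)=U^*\pi(\alpha(a)x)=U^*U\pi(a)U^*\pi(x)$; since $U^*U$ commutes with $\pi(a)$ by Lemma \ref{iteration of representations} and $U^*UU^*=U^*$, this equals $\pi(a)U^*\pi(x)=\pi(a)t(x)$.

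Conversely, given a representation $(\pi,t,L(H))$ of $X$, I set $U:=t(\alpha(1))^*$ and compute $U\pi(a)U^*=t(\alpha(1))^*\pi(a)t(\alpha(1))$. Using $\pi(a)t(\alpha(1))=t(\alpha(a)\alpha(1))=t(\alpha(a))$ and the inner product identity, this becomes $\pi(\langle\alpha(1),\alpha(a)\rangle_\A)=\pi(\alpha(1)\alpha(a))=\pi(\alpha(a))$, so \eqref{covariance rel1*} holds. The two assignments are mutually inverse: if we start with $U$, then $t(\alpha(1))^*=\pi(\alpha(1))U=UU^*U=U$; if we start with $t$, then for $x=\alpha(1)x\in X$ we have $U^*\pi(x)=t(\alpha(1))\pi(x)=t(\alpha(1)x)=t(x)$.

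The main effort is matching the ideals of covariance, since this requires identifying $\KK(X)$ and the induced map $(\pi,t,L(H))^{(1)}$. For $X=\alpha(1)\A$ a direct computation gives $\Theta_{x,y}(z)=xy^*z$, so $\KK(X)$ is canonically identified with $\alpha(1)\A\alpha(1)$ acting by left multiplication; in particular $\phi(a)=\alpha(a)$ equals $\Theta_{\alpha(a),\alpha(1)}$, and hence $J(X)=\A$. Applying the formula \eqref{induced representation on K(X)} and substituting $t(x)=U^*\pi(x)$ gives
$$(\pi,t,L(H))^{(1)}(\phi(a))=t(\alpha(a))t(\alpha(1))^*=U^*\pi(\alpha(a))U=U^*U\pi(a)U^*U=U^*U\pi(a),$$
where I used $\pi(\alpha(a))=U\pi(a)U^*$ together with $U^*U\in\pi(\A)'$ to collapse $(U^*U)^2$ to $U^*U$. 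Consequently $(\pi,t,L(H))^{(1)}(\phi(a))=\pi(a)$ if and only if $U^*U\pi(a)=\pi(a)$, so the ideal of covariance for $(\pi,t,L(H))$ as a representation of $X$ coincides with the ideal of covariance \eqref{covariance rel3} for $(\pi,U,H)$ as a representation of $(\A,\alpha)$. The final identification of ideals is the step most likely to need care, since one must pin down the canonical isomorphism $\KK(X)\cong\alpha(1)\A\alpha(1)$ in order to evaluate $(\pi,t,L(H))^{(1)}\circ\phi$ explicitly.
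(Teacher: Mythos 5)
Your proof is correct and follows essentially the same route as the paper's: both directions are handled via the identities $UU^*=\pi(\alpha(1))$, $U^*U\in\pi(\A)'$, and the identification $\phi(a)=\Theta_{\alpha(a),\alpha(1)}$, leading to $(\pi,t,L(H))^{(1)}(\phi(a))=U^*U\pi(a)$ exactly as in the paper. The only difference is that you write out in full the direction from $(\pi,U,H)$ to $(\pi,t,L(H))$ and the mutual-inverse check, which the paper dispatches with ``one easily checks''.
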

\begin{Proof} Let $(\pi,t,L(H))$ be a  representations of  $X$ and  put  $U:=t(\alpha(1))^*$.
Then exploiting \eqref{c*-corr} and \eqref{e-bim-a1} one gets
$$
\pi(\alpha(a))=\pi(\langle \alpha(1),\alpha(a)\rangle_\A)=t(\alpha(1))^*t(\alpha(a))=U t(a \cdot \alpha(1))=U\pi(a)U^*.
$$
 Thus, $(\pi,U, H)$ is a  representation of $(\A,\alpha)$.
  Moreover, observe that the operator $\phi(a)$ is just $\Theta_{\alpha(a),\alpha(1)}$ and since
$$
{(\pi,t,L(H))}^{(1)}(\Theta_{\alpha(a),\alpha(1)})=t(\alpha(a))t(\alpha(1))^*=U^*\pi(\alpha(a))U
=U^*U\pi(a)U^*U=U^*U\pi(a)
$$
it follows that the ideal of covariance for  $(\pi, t, L(H))$
coincides with the ideal of covariance for   $(\pi,U, H)$.

Conversely, for any    covariant representation  $(\pi,U,H)$ of
$(\A,\alpha)$  putting $t(x):=U^*\pi(x)$, $x\in X$, we have
$U=t(\alpha(1))^*$ and one easily checks    conditions
\eqref{c*-corr}.
\end{Proof}
By the universality of $\OO(J,X)$ and $C^*(\A,\alpha,J)$ (recall Definition~\ref{crossed product defn}), see also  Remark \ref{ideal of covariance vs universality}, we get the following
\begin{cor}\label{C-P-cross}
Let $X$ be  the  $C^*$-correspondence of a $C^*$-dynamical system
$(\A,\alpha)$ and let $J$ be an ideal in $(\ker\alpha)^\bot$. Then
algebras $\OO(J,X)$ and $C^*(\A,\alpha,J)$ are canonically
isomorphic. In particular,
\begin{itemize}
\item[(i)] $\OO(J,X)$ is generated as a $C^*$-algebra by the partial isometry $u=k_X(\alpha(1))^*$ and the $C^*$-algebra $k_\A(\A)$.
\item[(ii)] for every $J^\prime$-covariant  representation $(\pi,U, H)$ of $(\A,\alpha)$ with $J^\prime \supset J$, there is a homomorphism $\pi{\times}_J U$ of $\OO(J,X)$ uniquely determined by
$$
(\pi{\times}_J U)(u):= U\qquad \textrm{ and  }\qquad
(\pi{\times}_J U)\circ k_\A :=\pi.
$$
\end{itemize}
\end{cor}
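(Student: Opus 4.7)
The plan is to invoke Proposition \ref{universality proposition} and align the universal characterisations of $\OO(J,X)$ and $C^*(\A,\alpha,J)$.

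First, I would use Proposition \ref{universality proposition} to set up the translation: the assignments $U := t(\alpha(1))^*$ and $t(x) := U^*\pi(x)$ establish a one-to-one correspondence between representations $(\pi,U,H)$ of $(\A,\alpha)$ and representations $(\pi,t,L(H))$ of the $C^*$-correspondence $X$, preserving ideals of covariance on both sides. Since $J\subset (\ker\alpha)^\bot$, both Theorem \ref{existance theorem} and Proposition \ref{injectivity of k_A} guarantee that faithful $J$-covariant representations exist, and the correspondence matches them up on both sides.

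Next, I would match universal properties. By Proposition \ref{RCP algebra}, $\OO(J,X)$ is the universal $C^*$-algebra for representations of $X$ covariant on $J$, and by Remark \ref{ideal of covariance vs universality} the canonical triple $(k_\A,k_X,\OO(J,X))$ has ideal of covariance exactly $J$. Setting $u:=k_X(\alpha(1))^*$, Proposition \ref{universality proposition} turns $(k_\A,u)$ into a $J$-covariant representation of $(\A,\alpha)$ inside $\OO(J,X)$, and part (i) of Proposition \ref{RCP algebra} says precisely that this triple satisfies the universal property demanded of $C^*(\A,\alpha,J)$ by Definition \ref{crossed product defn} (its faithfulness on $\A$ follows from Proposition \ref{injectivity of k_A}). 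Combining this with Proposition \ref{star property*} and Theorem \ref{5.4}, the induced map $C^*(\A,\alpha,J)\to\OO(J,X)$, $a\mapsto k_\A(a)$, $u\mapsto k_X(\alpha(1))^*$, is a canonical isomorphism.

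For (i), every $x\in X=\alpha(1)\A$ has the form $x=\alpha(1)\cdot a$ with $a\in\A$, hence
$$
k_X(x)=k_X(\alpha(1)\cdot a)=k_X(\alpha(1))\,k_\A(a)=u^*k_\A(a),
$$
so $\OO(J,X)$ is generated by $u$ and $k_\A(\A)$. For (ii), a $J'$-covariant representation $(\pi,U,H)$ with $J'\supset J$ corresponds under Proposition \ref{universality proposition} to a representation of $X$ whose ideal of covariance contains $J$, and part (i) of Proposition \ref{RCP algebra} produces the desired homomorphism $\pi\times_J U$ with the prescribed values on $u$ and on $k_\A$.

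The main subtlety is reconciling the \emph{equality} $J=\{a:u^*ua=a\}$ built into Definition \ref{crossed product defn} with the \emph{containment} version of covariance used in Proposition \ref{RCP algebra}. This potential mismatch dissolves thanks to Remark \ref{ideal of covariance vs universality}: the canonical representation of $\OO(J,X)$ has ideal of covariance exactly $J$, so the two formulations in fact describe the same universal object and the comparison goes through cleanly.
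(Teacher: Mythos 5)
Your argument is correct and follows essentially the same route as the paper, which obtains Corollary \ref{C-P-cross} directly by matching the universal properties of $\OO(J,X)$ and $C^*(\A,\alpha,J)$ through Proposition \ref{universality proposition} and resolving the containment-versus-equality issue for the covariance ideal via Remark \ref{ideal of covariance vs universality}, exactly as you do. The only cosmetic point is that your appeal to Proposition \ref{star property*} and Theorem \ref{5.4} is superfluous: once the two universal properties are aligned, the mutually inverse canonical homomorphisms come for free, and your verifications of (i) and (ii) are precisely the intended ones.
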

\begin{cor}\label{corollary stacey's}
Let $X$ be  the  $C^*$-correspondence of a $C^*$-dynamical system
$(\A,\alpha)$.  Then the algebra $\OO(\A,X)$ together with the mapping
$k_\A$ and operator $u:=k_X(\al(1))^*$, cf. Proposition \ref{RCP algebra}, forms a Stacey's
crossed product for $(\A,\alpha)$ (Definition~\ref{Stacey}).
\end{cor}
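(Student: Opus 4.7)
The plan is to verify directly the three conditions i), ii), iii) of Definition~\ref{Stacey}, taking $B = \OO(\A,X)$, $i_\A = k_\A$, and $u = k_X(\al(1))^*$. First I would check that $u$ is indeed an isometry: using the covariance of $(k_\A,k_X,\OO(\A,X))$ on $J = \A$, we have
$$
u^*u = k_X(\al(1))k_X(\al(1))^* = {(k_\A,k_X,\OO(\A,X))}^{(1)}(\Theta_{\al(1),\al(1)}) = {(k_\A,k_X,\OO(\A,X))}^{(1)}(\phi(1)) = k_\A(1),
$$
and $k_\A(1)$ acts as the identity on all generators of $\OO(\A,X)$ (for any $x = \al(1)a \in X$ one has $k_\A(1)k_X(x) = k_X(\al(1)x) = k_X(x)$), so $k_\A(1)$ is the unit of $\OO(\A,X)$ and $u^*u = 1$. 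Then i) is a straightforward application of the representation relations \eqref{c*-corr}: $k_\A(a)k_X(\al(1)) = k_X(\al(a)\al(1)) = k_X(\al(a))$ and subsequently $k_X(\al(1))^*k_X(\al(a)) = k_\A(\al(1)^*\al(a)) = k_\A(\al(a))$, which rearranges to $u\,k_\A(a)\,u^* = k_\A(\al(a))$.

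For ii), note that $X = \al(1)\A$ so every $x \in X$ has the form $\al(1)a$, and hence
$$
k_X(x) = k_X(\al(1)\cdot a) = k_X(\al(1))k_\A(a) = u^* k_\A(a).
$$
Thus $k_X(X) \subset u^* k_\A(\A)$, and since $\OO(\A,X)$ is generated by $k_\A(\A) \cup k_X(X)$ (Proposition~\ref{RCP algebra}(ii)), it is also generated by $k_\A(\A)$ together with $u$.

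For iii), let $\pi:\A \to L(H)$ be a non-degenerate representation and $T \in L(H)$ an isometry intertwining $\pi$ and $\al$ in the sense $T\pi(a)T^* = \pi(\al(a))$ (this is the implicit covariance needed for $\pi \times T$ to exist as a $*$-homomorphism compatible with condition i). Because $T^*T = 1$, the ideal of covariance of $(\pi,T,L(H))$ in the sense of Definition~\ref{kowariant rep defn 2} is the whole of $\A$. Proposition~\ref{universality proposition} transfers $(\pi,T,L(H))$ to a representation $(\pi,t,L(H))$ of $X$ (via $t(x) = T^*\pi(x)$) whose ideal of covariance is likewise all of $\A$, i.e.\ $(\pi,t)$ is covariant on $J = \A$. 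Applying Proposition~\ref{RCP algebra}(i) yields a homomorphism $\pi\times_\A t : \OO(\A,X) \to L(H)$ satisfying $(\pi\times_\A t)\circ k_\A = \pi$ and $(\pi\times_\A t)\circ k_X = t$. It only remains to verify that $(\pi\times_\A t)(u) = T$: we compute $(\pi\times_\A t)(u) = t(\al(1))^* = \pi(\al(1))T$, and using that $\pi$ is unital (non-degenerate representation of a unital algebra) the covariance gives $\pi(\al(1)) = T\pi(1)T^* = TT^*$, whence $\pi(\al(1))T = TT^*T = T$.

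The only genuinely delicate point in the argument is the identification $u^*u = 1_{\OO(\A,X)}$, which relies crucially on choosing $J = \A$ (rather than a proper ideal) and on the unitality of $\A$ propagating to $\OO(\A,X)$ through $k_\A$; once this is in hand, everything else is essentially bookkeeping with the representation axioms \eqref{c*-corr} and the bijection of Proposition~\ref{universality proposition}. Observe incidentally that this proof goes through even though $J = \A$ need not be orthogonal to $\ker\al$, so Corollary~\ref{C-P-cross} does not apply directly and one cannot simply identify $\OO(\A,X)$ with a crossed product $C^*(\A,\al,J)$ of the present paper; this is consistent with Stacey's observation that $k_\A = i_\A$ need not be injective.
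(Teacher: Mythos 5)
Your proof is correct and follows essentially the route the paper intends: the corollary is left as an immediate consequence of the universal property of $\OO(\A,X)$ (Proposition \ref{RCP algebra}) combined with the bijection of Proposition \ref{universality proposition}, and your verification of conditions i)--iii) of Definition \ref{Stacey} — including the key identification $u^*u=k_\A(1)=1$ via covariance on $J=\A$ and the (correct) reading of iii) as requiring the covariance $T\pi(a)T^*=\pi(\al(a))$ — is exactly the fleshed-out version of that argument.
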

Note that $\OO(J,X)$ is defined for ideals $J$ that are not
necessarily orthogonal to~$\ker\alpha$. This along with
Proposition~\ref{universality proposition} may make one   guess  that
$\OO(J,X)$ is a more general object than  $C^*(\A,\alpha,J)$. At
the same time Proposition~\ref{injectivity of k_A} shows that when
$J$ is not  orthogonal to $\ker\alpha$ the algebra  $\OO(J,X)$
possesses certain 'degeneracy'. All this stimulates us to take a
closer look and provide a more thorough analysis of the structure
of $\OO(J,X)$ and its relation to $C^*(\A,\alpha,J)$. This is the
theme of the next section, where  we  present the  procedure of
the canonical reduction of $C^*$-correspondences,  algebras and
$C^*$-dynamical systems.  In particular, as a result of this
reduction we  establish the coincidence of $\OO(J,X)$ with
appropriate crossed products introduced in the article.

\section{Reductions of $C^*$-correspondences
}\label{Reduction and canonical}

\subsection{Reduction of $C^*$-correspondences}\label{Reduction of Hilbert bimodule}

Let us now fix a $C^*$-correspondence $X$ over $\A$ and an ideal
$J$ in $J(X)$. We will  reduce $X$ by taking quotient  to
a certain 'smaller' $C^*$-correspondence satisfying
\eqref{orthogonal property} and yielding  the same relative
Cuntz-Pimsner algebra as $X$ and $J$.

To this end we note that $C^*$-correspondences behave nice under
quotients. Namely, if $I$ is an ideal in $A$, then
$XI:=\clsp\{xi:x\in X,\, i\in I\}$ is  both a right Hilbert
$A$-submodule of $X$ and  a  right Hilbert $I$-module, as  we
have
\begin{equation}\label{XI equation}
 XI=\{xi:x\in X,\, i\in I\}=\{x\in X: \langle x,y\rangle_A\in I \textrm{ for all }y\in X\},
 \end{equation}
cf. \cite[Prop. 1.3]{katsura2}. Moreover, we may consider the quotient  space $X/XI$ as a right Hilbert $\A/I$-module with an $\A/I$-valued inner product and right action of $\A/I$ given by
\begin{equation}\label{right action}
\langle q_{XI}(x),q_{XI}(y)\rangle_{\A/I}:=q_{I}(\langle x,y\rangle_\A), \qquad q_{XI}(x)\cdot q_I(a):=q_{XI}(x\cdot a),
\end{equation}
where $q_I:\A\to \A/I$ and $q_{XI}: X \to X/ XI$ are the quotient
maps, cf. \cite[Lem. 2.1]{fmr}. However, in order to define a left
action on the quotient $X/XI$ we need to impose the following
condition on an ideal $I$ in $\A$:
\begin{equation}\label{X-invariance}
\phi(I)X\subset XI.
\end{equation}
An ideal $I$ in $\A$ satisfying \eqref{X-invariance} is called  $X$-\emph{invariant}
 and for such an ideal the quotient $A/I$-module $X/XI$
is equipped with quotient left  action $\phi_{\A/I}:\A/I\to
\LL(X/XI)$ given by
\begin{equation}\label{left action}
\phi_{\A/I}(q_{I}(a))q_{XI}(x):=q_{XI}(\phi(a)x),\qquad x \in
X,\,\, a\in \A.
\end{equation}
and hence $X/XI$ naturally becomes a $C^*$-correspondence  over
$\A/I$, cf.  \cite[Lem. 2.3]{fmr}, \cite{katsura2}. We refer to it
as to a \emph{quotient $C^*$-correspondence}.

We will apply the following main result of \cite{fmr} to the reduction ideal introduced below.

\begin{thm}[\cite{fmr}, Thm. 3.1] \label{takie tam aa}
Suppose $X$ is a $C^*$-correspondence over $\A$, $J$ is an ideal
in $J(X)$, and $I$ is an $X$-invariant ideal in $\A$. If we denote
by  $\I(I)$ the  ideal in $\OO(J,X)$ generated by $k_\A(I)$, then
the quotient $\OO(J,X)/\I(I)$ is canonically isomorphic to
$\OO(q_I(J),X/XI)$.
\end{thm}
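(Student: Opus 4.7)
The plan is to prove the isomorphism by exhibiting mutually inverse homomorphisms between the two quotients, each obtained by invoking the universal property of a relative Cuntz–Pimsner algebra. The key preparatory observation is that the quotient maps $q_I:\A\to \A/I$ and $q_{XI}:X\to X/XI$ behave well on the level of compact operators: for every $a\in J(X)$, the compact operator $\phi(a)$ descends to $\phi_{\A/I}(q_I(a))\in\KK(X/XI)$, and this descent is compatible with the formula ${(\pi,t)}^{(1)}(\Theta_{x,y})=t(x)t(y)^*$ from \eqref{induced representation on K(X)}.

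First I would construct a map $\Phi:\OO(q_I(J),X/XI)\to \OO(J,X)/\I(I)$. Let $p:\OO(J,X)\to \OO(J,X)/\I(I)$ be the quotient map and define
\begin{equation*}
\pi(q_I(a)):=p(k_\A(a)),\qquad t(q_{XI}(x)):=p(k_X(x)).
\end{equation*}
Well-definedness of $\pi$ is immediate from $k_\A(I)\subset\I(I)$, while well-definedness of $t$ follows from $k_X(XI)\subset k_X(X)k_\A(I)\subset \I(I)$. The representation axioms \eqref{c*-corr} for $(\pi,t)$ in the quotients transfer verbatim from those for $(k_\A,k_X)$ using \eqref{right action} and \eqref{left action}. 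To verify covariance on $q_I(J)$, fix $a\in J$ and approximate $\phi(a)=\lim\sum\Theta_{x_i,y_i}$ in $\KK(X)$; then $\phi_{\A/I}(q_I(a))=\lim\sum\Theta_{q_{XI}(x_i),q_{XI}(y_i)}$, whence
\begin{equation*}
(\pi,t)^{(1)}(\phi_{\A/I}(q_I(a)))=\lim\sum p(k_X(x_i)k_X(y_i)^*)=p((k_\A,k_X)^{(1)}(\phi(a)))=p(k_\A(a))=\pi(q_I(a)),
\end{equation*}
using covariance of $(k_\A,k_X)$ on $J$. Proposition~\ref{RCP algebra} then yields the desired $\Phi$ with $\Phi\circ \tilde k_{\A/I}=\pi$ and $\Phi\circ \tilde k_{X/XI}=t$, where $(\tilde k_{\A/I},\tilde k_{X/XI})$ are the universal generators of $\OO(q_I(J),X/XI)$.

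For the reverse map $\Psi$, I would define a representation of $X$ in $\OO(q_I(J),X/XI)$ by
\begin{equation*}
\rho(a):=\tilde k_{\A/I}(q_I(a)),\qquad \tau(x):=\tilde k_{X/XI}(q_{XI}(x)),
\end{equation*}
check \eqref{c*-corr}, and verify covariance on $J$: for $a\in J$, $\phi(a)\in\KK(X)$ and the same approximation argument, now applied in the opposite direction, shows $(\rho,\tau)^{(1)}(\phi(a))=\tilde k_{\A/I}(q_I(a))=\rho(a)$. By Proposition~\ref{RCP algebra} this integrates to a homomorphism $\OO(J,X)\to\OO(q_I(J),X/XI)$ sending $k_\A(a)\mapsto\tilde k_{\A/I}(q_I(a))$, which kills every element of $k_\A(I)$ and therefore factors through $\OO(J,X)/\I(I)$, giving $\Psi$.

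Finally, to show $\Phi$ and $\Psi$ are mutually inverse, I would check that $\Phi\Psi$ and $\Psi\Phi$ act as the identity on the generating sets $p(k_\A(\A))\cup p(k_X(X))$ and $\tilde k_{\A/I}(\A/I)\cup\tilde k_{X/XI}(X/XI)$ respectively; the claim then follows because each relative Cuntz--Pimsner algebra is generated by those sets. The main technical point in the whole argument is the bookkeeping in the covariance verifications — one has to take care that the passage from $\phi(a)$ to $\phi_{\A/I}(q_I(a))$ really is implemented, on the level of images under $(\cdot)^{(1)}$, by the corresponding quotient maps; this is where the assumption $J\subset J(X)$ and the compactness-preserving nature of the quotient procedure are essential.
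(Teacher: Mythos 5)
The paper does not actually prove this statement: it is quoted verbatim as Theorem 3.1 of \cite{fmr}, so there is no internal proof to compare against. Your double application of the universal property of Proposition~\ref{RCP algebra} — checking that $(p\circ k_\A\circ q_I^{-1},\, p\circ k_X\circ q_{XI}^{-1})$ is a $q_I(J)$-covariant representation of $X/XI$ and that $(\tilde k_{\A/I}\circ q_I,\, \tilde k_{X/XI}\circ q_{XI})$ is a $J$-covariant representation of $X$ annihilating $I$, then matching generators — is correct and is essentially the argument of \cite{fmr}. The one point you rightly isolate as needing care, namely that $\Theta_{x,y}\mapsto\Theta_{q_{XI}(x),q_{XI}(y)}$ extends to a well-defined $^*$-homomorphism $\KK(X)\to\KK(X/XI)$ carrying $\phi(a)$ to $\phi_{\A/I}(q_I(a))$ for $a\in J(X)$ (which also guarantees $q_I(J)\subset J(X/XI)$ so that $\OO(q_I(J),X/XI)$ is defined), is standard and legitimately usable here.
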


\begin{defn}\label{reduction ideal for correspondences}
For any ideal $J$ in $J(X)$ we define recursively an  increasing sequence of ideals
\begin{equation}\label{J-n}
J_0:=\{0\}\,\,\, \textrm{ and }\,\,\,   J_{n+1}:=\{a\in J:
\phi(a)X\subset X{J_n}\}\,\,\, \textrm{ for } n\geq 0,
\end{equation}
and call the ideal
$$
J_\infty:=\overline{\bigcup_{n\in \N} J_n}
$$
the \emph{reduction ideal} for $X$ and $J$,
\end{defn}
\begin{rem}\label{universal description of J_infty}
Since $\phi(J_{n+1})X\subset X J_n$ and $J_n\subset J_{n+1}$,  the ideals $J_n$, $n\in \N$,  and therefore also $J_\infty$ are $X$-invariant  ideals in $\A$. Actually,
let us note that
\begin{equation}\label{I_infty condition}
a \in J\,\, \textrm{ and } \,\, \phi(a)X\subset X J_\infty \Longrightarrow  a\in J_\infty,
\end{equation}
and  this implication characterizes  $J_\infty$ in the sense that $J_\infty$ is the smallest $X$-invariant ideal in $\A$ satisfying \eqref{I_infty condition}. In particular, $J_\infty=\{0\}$ if and only if $\ker \phi\cap J=\{0\}$.
Note also that  $C^*$-correspondences  $X / XJ_{n}$, $n\in \N$, may
be considered as  'approximations' of $X / XJ_{\infty}$, and  if $J_n=J_{n+1}$, for certain $n\in \N$, then
$J_\infty=J_n$ and $X / XJ_{n}=X / XJ_{\infty}$.
\end{rem}

The next  result  states, in particular,  that the quotient
$C^*$-correspondence $X/XJ_{\infty}$ and the quotient
$C^*$-algebra  $\A/J_\infty$ may be identified with the image of
the initial $C^*$-correspondence $X$ and $C^*$-algebra $\A$ in the
relative Cuntz-Pimsner algebra $\OO(J,X)$.
\begin{thm}\label{reduction thm}
Let  $X$ be a $C^*$-correspondence over $\A$ and  $J$  an ideal in
$J(X)$. Then for $n\in \N\cup\{\infty\}$, we have a canonical
isomorphism
\begin{equation}\label{O-n}
\OO(J,X) \cong \OO(q_{J_n}(J),X/XJ_{n}).
\end{equation}
Moreover, for $n=\infty$ we have
\begin{equation}\label{ker-infty}
\ker \phi_{\A/J_\infty}\cap q_{J_\infty}(J)=\{0\}
\end{equation}
and thus we have the following (again canonical) isomorphisms
\begin{equation}\label{iso-infty}
k_\A(\A)\cong \A/J_\infty, \qquad k_X(X)\cong X/XJ_\infty.
\end{equation}
\end{thm}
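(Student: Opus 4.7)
The plan is to reduce everything to a single inductive claim: $k_\A(J_n) = \{0\}$ for every $n \in \N$, whence $k_\A(J_\infty) = \{0\}$ by continuity. Once this is in hand, the ideal $\I(J_n) \triangleleft \OO(J,X)$ generated by $k_\A(J_n)$ is trivial, and Theorem~\ref{takie tam aa} applied with $I = J_n$ (which is $X$-invariant by Remark~\ref{universal description of J_infty}) yields the isomorphism \eqref{O-n} for each $n \in \N \cup \{\infty\}$.

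To prove the inductive claim, let $\psi := (k_\A, k_X, \OO(J,X))^{(1)} : \KK(X) \to \OO(J,X)$ denote the induced representation of the compacts, cf.\ \eqref{induced representation on K(X)}. The base $J_0 = \{0\}$ is trivial. For the step, assume $k_\A(J_n) = \{0\}$ and take $a \in J_{n+1}$. Since $J_{n+1} \subset J$ and $(k_\A, k_X)$ is $J$-covariant, $k_\A(a) = \psi(\phi(a))$, so it suffices to show $\psi(\phi(a)) = 0$. Pick an approximate unit $(u_\lambda)$ of $\KK(X)$ of the form $u_\lambda = \sum_i \Theta_{x_i^\lambda, x_i^\lambda}$ (finite sums); then $\phi(a) u_\lambda \to \phi(a)$ in norm and $\phi(a) u_\lambda = \sum_i \Theta_{\phi(a) x_i^\lambda,\, x_i^\lambda}$. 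By the definition of $J_{n+1}$, each $\phi(a) x_i^\lambda$ lies in $X J_n$, so by \eqref{XI equation} it factors as $y_i^\lambda \cdot b_i^\lambda$ with $b_i^\lambda \in J_n$. Hence $k_X(\phi(a) x_i^\lambda) = k_X(y_i^\lambda) k_\A(b_i^\lambda) = 0$ by the inductive hypothesis, so $\psi(\phi(a) u_\lambda) = 0$ for every $\lambda$, and continuity of $\psi$ forces $\psi(\phi(a)) = 0$.

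For \eqref{ker-infty}, suppose $a \in J$ satisfies $\phi_{\A/J_\infty}(q_{J_\infty}(a)) = 0$. Unwinding \eqref{left action}, this says $q_{XJ_\infty}(\phi(a) x) = 0$ for every $x \in X$, i.e.\ $\phi(a) X \subset X J_\infty$. The characterization of $J_\infty$ recorded in \eqref{I_infty condition} (Remark~\ref{universal description of J_infty}) then forces $a \in J_\infty$, so $q_{J_\infty}(a) = 0$. The identifications \eqref{iso-infty} now follow by combining \eqref{O-n} at $n = \infty$ with Proposition~\ref{injectivity of k_A}: orthogonality \eqref{ker-infty} makes the universal map $\A/J_\infty \to \OO(q_{J_\infty}(J), X/XJ_\infty)$ injective, so its $X/XJ_\infty$-component is isometric; transporting back through \eqref{O-n} matches $k_\A(\A)$ with $\A/J_\infty$ and $k_X(X)$ with $X/XJ_\infty$.

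The main obstacle is the inductive step, specifically promoting the pointwise inclusion $\phi(a) X \subset X J_n$ to the operator-level vanishing $\psi(\phi(a)) = 0$. This is exactly where the two key technical inputs are needed: an approximate unit of $\KK(X)$ of rank-one-sum form, and the sharp factorization $XI = \{xi : x \in X, i \in I\}$ from \eqref{XI equation}.
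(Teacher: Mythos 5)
Your proposal is correct and follows essentially the same route as the paper: reduce via Theorem~\ref{takie tam aa} to the inductive claim $k_\A(J_n)=\{0\}$, use $J$-covariance to write $k_\A(a)=(k_\A,k_X,\OO(J,X))^{(1)}(\phi(a))$ for $a\in J_{n+1}$, and derive \eqref{ker-infty} from \eqref{I_infty condition} together with Proposition~\ref{injectivity of k_A}. The only (immaterial) difference is in the inductive step, where the paper shows $k_\A(a)k_X(X)=\{0\}$ and appeals to the fact that the induced representation of $\KK(X)$ kills an operator annihilating $k_X(X)$, while you make the same point explicit via an approximate unit of $\KK(X)$ built from finite sums of rank-one operators.
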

\begin{Proof} In view of Theorem  \ref{takie tam aa} to prove the first part of theorem it is enough to show that  for every ideal $J_n$, $n=0,1,..., \infty$,  we have  $k_\A(J_n)=0$.  It is  clear that $k_\A(J_0)=0$. Assume that $k_\A(J_n)=0$ and let $a\in J_{n+1}$. Then for every $x\in X$ there exists $y(x)\in X$ and $i(x)\in J_n$ such that $\phi(a)x=y(x)i(x)$ and thus
$$
k_\A(a)k_X(x)=k_X(\phi(a)x)= k_X(y(x)i(x))=k_X(y(x))k_\A(i(x))=0,
$$
that is $k_\A(a)k_X(X)=\{0\}$.
Moreover, since $J_{n+1}\subset J$, and the universal representation is $J$-covariant we have $k_\A(a)={(k_\A,k_X,\OO(J,X))}^{(1)}(\phi(a))$, for the mapping given by \eqref{induced representation on K(X)}. By  \eqref{induced representation on K(X)}, relation ${(k_\A,k_X,\OO(J,X))}^{(1)}(\phi(a))k_X(X)=\{0\}$ imply ${(k_\A,k_X,\OO(J,X))}^{(1)}(\phi(a))=0$ and therefore
$k_A(a)=0$. Hence $k_\A(J_{n+1})=0$, and it follows that $k_\A(J_n)=0$ for every
$n=0,1,..., \infty$.
\\
 To prove that  $ \ker \phi_{\A/J_\infty}\cap
q_{J_\infty}(J)=\{0\} $ take $a\in J$ and suppose that
$\phi_{\A/J_\infty}( q_{J_\infty}(a))=0$.  Then by  \eqref{left
action}  we see that $\phi(a) X \subset XJ_{\infty}$ and  by
\eqref{I_infty condition} we have $q_{J_\infty}(a)=0$.
 Now it suffices to apply Proposition \ref{injectivity of k_A}.
\end{Proof}
\begin{rem}
 Theorem \ref{reduction thm} shows, in particular, that the ideal $J_\infty$ plays the role of a certain  'measure' of the degree of degeneracy of $\OO(J,X)$ --  the bigger $J_\infty$ is
  the smaller $\OO(J,X)$ is.
 In particular, $\OO(J,X)=0$ if and only if $J_\infty=
\A$. Obviously,  $\A$ embeds into $\OO(J,X)$ if and only if
$J_\infty=0$ which is  equivalent to $X=X/XJ_{\infty}$.
Moreover, it follows that  one may always  restrict his interest only to the relative  Cuntz-Pimsner algebras $\OO(J,X)$ determined by ideals such that
$$
 J \subset (\ker \phi)^\bot,
$$
since otherwise one has to pass (either explicitly or implicitly)
to the \emph{reduced $C^*$-correspondence} $X/XJ_\infty$ over the
\emph{reduced $C^*$-algebra} $A/J_{\infty}$ and the \emph{reduced
ideal} $q_{J_\infty}(J)\subset (\ker \phi_{\A/J_\infty})^\bot$.
\end{rem}
\subsection{Katsura's canonical relations for relative
Cuntz-Pimsner algebras}\label{Katsura's canonical relations}

 In \cite{katsura2} T. Katsura  associated with a $C^*$-correspondence $X$  a $C^*$-algebra $\OO_X$    which is the relative Cuntz-Pimsner algebra $\OO(J, X)$ for  $J=(\ker \phi)^\bot\cap J(X)$. By subtle  analysis  he proved that any relative  Cuntz-Pimsner algebra $\OO(J, X)$ is isomorphic to $\OO_{X_\omega}$ for certain 
 $X_\omega$.
In this subsection we apply the
reduction procedure presented above to give an alternative
definition  of these
$C^*$-cor\-res\-pon\-den\-ces.

The reduction ideal $J_\infty$ defined above
coincides with the ideal  $J_{-\infty}$ constructed in \cite[Sect.
11]{katsura2}. It is noted in \cite{katsura2}, that for the pair
$\omega=(J_\infty,J)$ there is a natural  (yet a bit
sophisticated)
 $C^*$-correspondence structure on the following pair  of 'pullbacks'
$$
A_\omega=\{(a,a')\in \A/J_\infty \oplus \A/J:
q_{J(J_\infty)}(a)=q_{J(J_\infty)}(a')\}
$$
$$
X_\omega=\{(x,x')\in X/X{J_\infty}\oplus X/XJ: q_{XJ(J_\infty)}(x)=q_{XJ(J_\infty)}(x')\}
$$where $J(J_\infty):=(q_{XJ_\infty})^{-1}((\ker \phi_{\A/J_\infty})^\bot \cap J(X/XJ_\infty))$  (then $J_\infty \subset J\subset J(J_\infty)$) and $q_{J(J_\infty)}$   denotes here the quotient map composed with a corresponding isomorphism $(\A/J_\infty)/J(J_\infty)\cong \A/J(J_\infty)$ or $(\A/J)/J(J_\infty)\cong \A/J(J_\infty)$ and similar convention is used for  $q_{XJ(J_\infty)}$.
Let us note that  the mappings
$$
\A/J_\infty \ni a \mapsto a\oplus q_J(a) \in \A_\omega, \qquad X/XJ_\infty \ni x \mapsto x\oplus q_J(x) \in X_\omega
$$
are injective  and therefore the  pair $(\A_\omega,X_\omega)$ is
an extension of the reduced pair $(\A/J_\infty, X/XJ_\infty)$ (it
is a nontrivial extension unless $J_\infty=J(J_\infty)$).

Thus
Katsura's construction in fact involves two steps -- reduction (in the sense of Theorem \ref{reduction thm}) and an
extension (which we describe precisely in Definition \ref{Katsura's relations} below); and perhaps exposing them  explicitly  makes his construction a
bit more transparent.
\begin{defn}\label{Katsura's relations}
Let $J$ be an ideal in $J(X)$ and  $J_X=(\ker \phi)^\bot \cap J(X)$. If $J$ is orthogonal to $\ker\phi$, then  \emph{Katsura's canonical $C^*$-correspondence for} $X$ and $J$ is  a $C^*$-correspondence   $X_\omega$  over $\A_\omega$
where
$$
A_\omega=\{(a,a')\in \A \oplus \A/J: q_{J_X}(a)=q_{J_X}(a')\}
$$
$$
X_\omega=\{(x,x')\in X\oplus X/XJ: q_{XJ_X}(x)=q_{XJ_X}(x')\}
$$
and  operations are defined  simply by coordinates:
$$
(x,x') \cdot (a,a')= (xa,x'a'),\qquad (a,a')\cdot  (x,x') = (ax,a'x')
$$
and
$$
\langle (x,x'), (y,y')\rangle_{\A_\omega} =(\langle x,y \rangle_\A, \langle  x',y'\rangle_{\A/J}) \in A_\omega.
 $$
 \end{defn}
 Extending the procedure presented above to the general situation
 we naturally arrive at the following
 \begin{defn}\label{Katsura's relations-general}
If $J$ is arbitrary (not necessarily orthogonal to $\ker\phi$), we define a pair $(\A_\omega,X_\omega)$ to be
Katsura's canonical $C^*$-correspondence for the reduced
$C^*$-correspondence $X/XJ_\infty$ and the reduced ideal
$q^{J_\infty}(J)$ and also call it \emph{Katsura's canonical
$C^*$-correspondence for} $X$ and $J$.
\end{defn}
\begin{rem}

 By Remark~\ref{universal description of J_infty} the coincidence of  notation and terminology in Definitions~\ref{Katsura's relations} and
 \ref{Katsura's relations-general} does not cause
confusion:  if $J$ is orthogonal to $\ker\phi$, then
$(\A_\omega,X_\omega)$ is an extension of $(A,X)$. Moreover
$(\A_\omega,X_\omega)\cong(A,X)$ if and only if $J=(\ker
\phi)^\bot \cap J(X)$.
\end{rem}
One can see that  the above Definition~\ref{Katsura's relations-general}
coincides with Katsura's construction and therefore  by
\cite[Prop. 11.3]{katsura2} we have
\begin{thm}\label{Katsuras canonical theorem}
If $J$ is an ideal in $J(X)$ and $(\A_\omega,X_\omega)$ is Katsura's canonical $C^*$-correspondence for $X$ and $J$, then we have an isomorphism
$$
\OO(J,X)\cong \OO((\ker \phi_{\omega})^\bot\cap J(X_\omega),
X_\omega)
$$
where $\phi_{\omega}$ denotes the left action of $\A_\omega$ on $X_\omega$.
\end{thm}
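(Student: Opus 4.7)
The isomorphism is a statement about two universal objects, so the natural strategy is to realize each as the appropriate universal representation of the other, and then match the ideals of covariance. I would organize the argument in two stages: first reduce to the orthogonal-ideal case, and then carry out the pullback comparison.

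\smallskip

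\emph{Stage 1 (reduction).} By Theorem~\ref{reduction thm}, applied to the triple $(\A,X,J)$, one has
\[
\OO(J,X)\cong \OO(q_{J_\infty}(J),\, X/XJ_\infty),
\]
and the reduced correspondence satisfies $\ker\phi_{\A/J_\infty}\cap q_{J_\infty}(J)=\{0\}$. Moreover, by Definition~\ref{Katsura's relations-general}, Katsura's canonical $C^*$-correspondence $(\A_\omega,X_\omega)$ associated with $(X,J)$ is exactly the one associated with the reduced pair. Hence it suffices to treat the case in which $J$ is already orthogonal to $\ker\phi$, so that $(\A_\omega,X_\omega)$ is given by the explicit pullback formulas of Definition~\ref{Katsura's relations} and the embeddings $\A\hookrightarrow \A_\omega$, $X\hookrightarrow X_\omega$ sending $a\mapsto(a,q_J(a))$, $x\mapsto(x,q_{XJ}(x))$ are injective.

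\smallskip

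\emph{Stage 2 (matching universal properties).} Under the above embedding, for $a\in J$ one has $(a,0)\in\A_\omega$ (because $J\subset J_X$), and a direct check on the pullback shows that this identification carries $J$ \emph{onto} $(\ker\phi_\omega)^\bot\cap J(X_\omega)$; this is the key identification and I expand on it below. Granted this, I would construct the two canonical homomorphisms.
\begin{itemize}
\item[(i)] From $\OO(J,X)$ to $\OO((\ker\phi_\omega)^\bot\cap J(X_\omega),X_\omega)$: compose the embeddings $\A\hookrightarrow \A_\omega$, $X\hookrightarrow X_\omega$ with the universal maps $k_{\A_\omega},k_{X_\omega}$ to obtain a representation of $X$. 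Because $J$ goes into the ideal of covariance of $(k_{\A_\omega},k_{X_\omega})$, this representation is $J$-covariant, and the universal property of $\OO(J,X)$ (Proposition~\ref{RCP algebra}) supplies the desired homomorphism.
\item[(ii)] From $\OO((\ker\phi_\omega)^\bot\cap J(X_\omega),X_\omega)$ to $\OO(J,X)$: define a representation of $(\A_\omega,X_\omega)$ in $\OO(J,X)$ by $(a,a')\mapsto k_\A(a)$ and $(x,x')\mapsto k_X(x)$; the pullback condition together with $k_\A(J)\subset k_\A(\A)$ being covariant ensures this is covariant on the ideal identified in the previous paragraph.
\end{itemize}
A standard check on the generators $k_\A(\A)\cup k_X(X)$ and $k_{\A_\omega}(\A_\omega)\cup k_{X_\omega}(X_\omega)$ then shows that these two maps are mutually inverse.

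\smallskip

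\emph{The main obstacle.} The nontrivial part is the identification
\[
\{(a,0):a\in J\}\;=\;(\ker\phi_\omega)^\bot\cap J(X_\omega)
\]
inside $\A_\omega$. The inclusion $\subset$ requires two verifications: that each $\phi_\omega(a,0)$ with $a\in J$ is a compact operator on $X_\omega$ (this comes from $a\in J\subset J(X)$, together with the pullback structure which allows the compact operator $\phi(a)$ on $X$ to be transported to a compact operator on $X_\omega$, noting that the second coordinate acts trivially), and that $(a,0)$ annihilates $\ker\phi_\omega$ (which follows from $J\cap\ker\phi=\{0\}$). The reverse inclusion $\supset$ is the more delicate half: given $(a,a')\in (\ker\phi_\omega)^\bot\cap J(X_\omega)$, one must show that $a'=0$ and $a\in J$. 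The pullback compatibility $q_{J_X}(a)=q_{J_X}(a')$, combined with the fact that $\phi_\omega(a,a')$ is compact and orthogonal to the obvious copies of $\ker\phi\oplus 0$ and $0\oplus\ker\phi_{\A/J}$ inside $\ker\phi_\omega$, will force $a'\in J/J = 0$ and then $a\in J_X$ with $\phi(a)$ compact, which, together with orthogonality, forces $a\in J$. Once this identification is done, the two maps constructed above immediately become mutual inverses on generators, and the theorem follows.
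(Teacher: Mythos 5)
Your proposal takes a genuinely different route from the paper, which in fact offers no direct proof at all: it observes that Definition~\ref{Katsura's relations-general} reproduces Katsura's construction and then simply invokes \cite[Prop.~11.3]{katsura2}. Your Stage~1 (reducing via Theorem~\ref{reduction thm} to the case $J\cap\ker\phi=\{0\}$) is fine and matches the paper's set-up. The difficulty is that everything you defer to ``the key identification''
$$
\{(a,0):a\in J\}\;=\;(\ker \phi_{\omega})^\bot\cap J(X_\omega),
$$
together with the covariance checks in (i) and (ii), is precisely the nontrivial content of Katsura's proposition, and your sketch of it is both too vague and, in one place, incorrect. Most importantly, you never pin down the left action $\phi_\omega$, and that is where the whole difficulty lives: the paper itself calls the correspondence structure on the pullback ``a bit sophisticated'', and indeed $\phi_\omega(a,a')$ must act on the second summand $X/XJ$ through the operator induced by $\phi(a)$ (the first coordinate), not through $\phi_{\A/J}(a')$. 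With the naive coordinatewise action your identification is simply false: for $X=\A=\C$ and $J=\{0\}$ the coordinatewise action makes $\phi_\omega$ injective with image in $\KK(X_\omega)$, so $(\ker\phi_\omega)^\bot\cap J(X_\omega)=\A_\omega\neq\{0\}=J\oplus 0$, and the right-hand algebra becomes $C(\T)\oplus C(\T)$ rather than the Toeplitz algebra $\OO(\{0\},\C)$.

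Two further concrete gaps. First, in the inclusion ``$\subset$'' you assert that $\phi_\omega(a,0)$ is compact on $X_\omega$ because ``the second coordinate acts trivially''; but elements of $\KK(X_\omega)$ are limits of operators $\Theta_{(x,x'),(y,y')}$ which act on both coordinates simultaneously, so exhibiting $\phi_\omega(a,0)$ as such a limit --- and, dually, showing in step (i) that the images $\Theta_{\iota(x),\iota(y)}$ of rank-one operators on $X$ sum to $\phi_\omega(a,0)$ rather than to the pair $(\phi(a),\overline{\phi(a)})$, where $\overline{\phi(a)}$ is the induced operator on $X/XJ$ --- is exactly the point that requires the correct $\phi_\omega$ and a genuine argument; note that $\phi(a)X\subset XJ$ fails for general $a\in J$, so these two operators really differ. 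Second, in the inclusion ``$\supset$'' the claim that orthogonality to $0\oplus\ker\phi_{\A/J}$ ``forces $a'\in J/J=0$'' is not a valid inference: orthogonality to an ideal only places $a'$ in its annihilator and never forces $a'=0$ unless that ideal is essential. As it stands, completing your outline would amount to reproving \cite[Prop.~11.3]{katsura2}, which is what the paper cites instead.
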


Additional discussion of Katsura's canonical $C^*$-correspondence
and its relation to the crossed product  will be given
in~\ref{canonic-c*-dynam-syst}.


\subsection{Reduction of $C^*$-dynamical systems}\label{Reduction of C*-dynamical systems}

Once we have noted the relation between crossed products and
relative Cuntz-Pimsner algebras in Subsection~\ref{Crossed prod
=Relative Cuntz-Pimsner algebras} and established  the reduction
procedure for relative Cuntz-Pimsner algebras in  Subection
\ref{Reduction of Hilbert bimodule}  it is reasonable to apply
this procedure to  crossed products, and this is the theme of
the present subsection. As a by product we also establish the
coincidence of  $\OO(J,X)$ with appropriate crossed products
introduced in the article.

 Let $X$ be the $C^*$-correspondence of a $C^*$-dynamical system $(\A,\alpha)$ and  let $J$ be an ideal in $\A$.
 We note that an ideal $I$ satisfies \eqref{X-invariance} if and only if  $\al(I)\subset I$ hence $X$-invariance
 is equivalent to $\al$-invariance. In particular, the  ideals $J_n$, $n=0,1,...,\infty$,
 from Definition \ref{reduction ideal for correspondences}  are $\al$-invariant.  Formulae  \eqref{J-n} mean that
\begin{equation}
\label{e-j-n}
J_n=\underbrace{\alpha^{-1}\Big(\alpha^{-1}\big(...(\alpha^{-1}}_{n\textrm{ times}}(\{0\})\cap J)...\big)\cap J\Big)\cap J , \,\,\,\qquad n\in \N,
\end{equation}
that is
$$
J_n=(\ker \alpha^{n})\cap \bigcap_{k=0}^{n-1} \alpha^{-k}(J).
$$
\begin{rem}
\label{remark-inf}
 Recalling Remark \ref{universal description of
J_infty} we note that $J_\infty=\overline{\bigcup_{n\in \N} J_n}$
is the smallest ideal in $A$ such that
$$
a \in J\,\, \textrm{ and } \,\, \al(a)\in J_\infty \Longrightarrow
a\in J_\infty,
$$
and $J_\infty=\{0\}$ if and only if $ \ker\alpha \cap J=\{0\}$.
\end{rem}

We give an alternative description of $J_\infty$ in the following lemma.
\begin{lem}\label{lemma about J_infty}
The sets  $\overline{\{a \in \A: \exists_{n\in\N}\,\, \alpha^n(a)=0\}}$ and $\{a \in \A: \lim_{n\to \infty} \alpha^n(a)=0\}$ coincide and form the smallest $\alpha$-invariant ideal $I_\infty$  in $\A$ such that $\alpha$ factors through to a monomorphism on the  quotient algebra $\A/I_\infty$. In particular,
\begin{equation}\label{ideal I_infinity}
J_\infty=\{a \in \A:  \alpha^n(a)\in J \textrm{ for all } n\in \N  \textrm{ and } \lim_{n\to \infty} \alpha^n(a)=0\}
\end{equation}
is the largest $\alpha$-invariant ideal contained in  $J\cap I_\infty$.
\end{lem}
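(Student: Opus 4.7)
The plan is to establish the set-theoretic equality first, then read off the ideal-theoretic claims; the description of $J_\infty$ will follow from the same functional-calculus trick as the main step of the equality.

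Write $I_0 := \bigcup_{n \in \N}\ker\alpha^n$ and $L := \{a \in \A : \lim_n \alpha^n(a) = 0\}$. Since $\alpha$ is a $^*$-homomorphism of a unital $C^*$-algebra it is contractive, so $\|\alpha^n(a)\|$ is non-increasing in $n$, and one checks directly that $L$ is an $\alpha$-invariant closed two-sided $^*$-ideal. The inclusion $\overline{I_0} \subseteq L$ is immediate since $I_0 \subseteq L$. For the reverse inclusion $L \subseteq \overline{I_0}$ I would first reduce to the case $a \geq 0$: being a closed $^*$-ideal, $L$ contains together with $a$ the real and imaginary parts of $a$ and their positive/negative parts obtained via continuous functional calculus. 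For $a \geq 0$ in $L$ and $\epsilon > 0$ set $f_\epsilon(t) := (t-\epsilon)_+$; then $\|a-f_\epsilon(a)\| \leq \epsilon$, and the commutation $\alpha^n(f_\epsilon(a)) = f_\epsilon(\alpha^n(a))$ yields $f_\epsilon(a) \in \ker\alpha^N \subseteq I_0$ for any $N$ with $\|\alpha^N(a)\| \leq \epsilon$; letting $\epsilon \to 0$ places $a$ in $\overline{I_0}$. This functional-calculus step is the only non-routine point of the argument.

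Set $I_\infty := \overline{I_0} = L$. The $^*$-endomorphism induced by $\alpha$ on $\A/I_\infty$ is injective, because $\alpha(a) \in L$ forces $\alpha^{n+1}(a) \to 0$, whence $a \in L$. For minimality, let $I$ be any closed $\alpha$-invariant ideal on whose quotient $\alpha$ descends to an injective endomorphism; then for $a \in I_0$ the relation $\alpha^N(a) = 0 \in I$ together with injectivity of the $N$-th iterate force $a \in I$, yielding $\overline{I_0} \subseteq I$.

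For $J_\infty$, denote by $K$ the set on the right-hand side of the displayed identity in the lemma. Formula \eqref{e-j-n} immediately gives $J_n \subseteq K$, and since $K$ is closed we get $J_\infty \subseteq K$. Conversely, for positive $a \in K$ and $\epsilon > 0$, the element $f_\epsilon(a)$ lies in $\ker \alpha^N$ for large $N$ as before, while for $k < N$ each $\alpha^k(f_\epsilon(a)) = f_\epsilon(\alpha^k(a))$ lies in $J$, since $J$ is a closed ideal and continuous functional calculus with $f_\epsilon(0)=0$ sends $\alpha^k(a) \in J$ back into $J$. Thus $f_\epsilon(a) \in J_N$, and letting $\epsilon \to 0$ gives $a \in J_\infty$. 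Finally, $J_\infty \subseteq J \cap I_\infty$ holds by construction, and any $\alpha$-invariant ideal $I' \subseteq J \cap I_\infty$ is contained in $K = J_\infty$, because $\alpha$-invariance of $I'$ gives $\alpha^k(a) \in I' \subseteq J$ for all $k$ and $I' \subseteq I_\infty$ gives the limit condition.
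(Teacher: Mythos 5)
Your proof is correct, and it reaches the key inclusion by a genuinely different technical device than the paper's. The paper argues quotient-theoretically: it asserts that both $I_1=\overline{\bigcup_n\ker\alpha^n}$ and $I_2=\{a:\lim_n\alpha^n(a)=0\}$ are $\alpha$-invariant ideals whose quotients carry injective induced endomorphisms, gets $I_1\subseteq I_2$ from minimality of $I_1$, and then kills $I_2/I_1$ by observing that the induced monomorphism there is isometric while $\|\alpha^n(a)\|\to 0$; the identity \eqref{ideal I_infinity} is then obtained by computing $\bigcup_n J_n$ set-theoretically and passing to closures. You replace the isometry argument by the explicit functional-calculus approximation $f_\epsilon(a)=(a-\epsilon)_+$, which lies in $\ker\alpha^N$ as soon as $\|\alpha^N(a)\|\le\epsilon$. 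This buys two things: it makes the inclusion $I_2\subseteq \overline{I_0}$ completely explicit (the paper's claim that $\alpha$ descends to a monomorphism on $\A/I_1$ is in fact the least obvious point of its own argument, and your approximation is exactly what substantiates it), and the same $f_\epsilon$ trick yields the closure identity $\overline{\bigcup_n J_n}=I_\infty\cap\bigcap_n\alpha^{-n}(J)$, which the paper passes over with a `thus'. The only cosmetic omission is in the converse inclusion $K\subseteq J_\infty$, where you treat only positive $a$; since $K$ is a closed two-sided ideal, the reduction to positive elements you carried out earlier applies verbatim, so nothing is lost.
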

\begin{Proof}
 One sees that both $I_1=\overline{\{a \in \A: \exists_{n\in\N}\,\, \alpha^n(a)=0\}}$ and $I_2:=\{a \in \A: \lim_{n\to \infty} \alpha^n(a)=0\}$ are $\alpha$-invariant ideals in $\A$ such that $\alpha$ factors through to monomorphisms both on $\A/I_1$ and $\A/I_2$. Moreover, it is clear that  $I_1$ is  the minimal ideal with the aforementioned properties. In particular, $I_1\subset I_2$ and to see the opposite inclusion note that since $a\mapsto \alpha(a)$ factors through to the monomorphism (isometric mapping) on $I_2/I_1$ and $\alpha^n(a)\to 0$ for all $a\in I_2$ it follows that $I_2/I_1=\{0\}$.

 For the second part of the assertion  note that
 $ a\in
\bigcup_{n\in \N} J_n$ if and only if there is $n=1,2,...,$ such that
$$
\al^n(a)=0, \qquad \al^{k}(a)\in J, \qquad k=1,2,...,n-1.
$$
Hence  $\bigcup_{n\in \N} J_n=(\bigcup_{n=0}^\infty\ker \alpha^{n})\cap \bigcap_{n=0}^{\infty}\al^{-n}(J)
 $ and thus $J_\infty=I_\infty\cap \bigcap_{n=0}^{\infty}\al^{-n}(J)$.
\end{Proof}
\smallskip

Let $n\in \N\cup\{\infty\}$. Since the ideal $J_n$ is
$\al$-invariant,  we have a quotient $C^*$-dynamical system
$(\A/J_n, \al_n)$  where  $\alpha_n:\A/J_{n} \to \A/J_n$ is  given
by $ \alpha_n\circ q_{J_n} =q_{J_n}\circ \alpha $ and the quotient
$C^*$-correspondence $X/X J_n$ may be viewed as the
 $C^*$-correspondence of $(\A/J_n,
\alpha_n)$. In particular
\begin{equation}\label{delta_infinity}
\alpha_\infty(a+ J_{\infty}):=\alpha(a) + J_\infty
\end{equation}
is an endomorphism of $\A/J_{\infty}$ such that
\begin{equation}\label{J-infty}
\ker \alpha_\infty \cap q_{J_{\infty}}(J)=\{0\}.
\end{equation}

  Obviously one may apply  Theorem \ref{reduction thm} to  each of the systems $(\A/J_n,\alpha_n)$, $n\in \N\cup\{\infty\}$, however,
  we  focus on the case $n=\infty$. Then by virtue of  Theorem~\ref{reduction thm} and Corollary~\ref{C-P-cross} along
  with \eqref{J-infty} we get
\begin{prop}\label{reducing C*-Hilbert bimodules}
If  $X$ is the  $C^*$-correspondence of a $C^*$-dynamical system
$(\A,\alpha)$ and  $J$ is an ideal in $\A$, then
$$\OO(J,X)=\OO( q_{J_\infty}(J), X/XJ_\infty)= C^* (\A/J_{\infty}, \alpha_\infty,q_{J_\infty}(J))$$
is  a universal algebra generated by a copy of the algebra $\A/J_{\infty}$ and a partial isometry $u$ subject to relations
$$
ua u^*=\alpha_\infty(a),\,\,\, a\in \A/J_{\infty}, \qquad q_{J_\infty}(J)=\{a\in \A/J_{\infty}: u^*u a=a\}.
$$
\end{prop}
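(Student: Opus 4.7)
The plan is to chain together the two main structural results already established: the reduction theorem for relative Cuntz-Pimsner algebras (Theorem \ref{reduction thm}) and the identification of crossed products with Cuntz-Pimsner algebras for the correspondence of a $C^*$-dynamical system (Corollary \ref{C-P-cross}). The universal description at the end is then just the unpacking of Definition \ref{crossed product defn}.

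First I would apply Theorem \ref{reduction thm} with $n=\infty$ to obtain the canonical isomorphism
$$
\OO(J,X) \;\cong\; \OO(q_{J_\infty}(J),\, X/XJ_\infty).
$$
Next, I would verify that $X/XJ_\infty$ is precisely the $C^*$-correspondence of the reduced $C^*$-dynamical system $(\A/J_\infty, \alpha_\infty)$ in the sense of \eqref{e-bim-a1}. This amounts to checking that the module $X = \alpha(1)\A$ modulo $XJ_\infty$ is isomorphic as a $C^*$-correspondence to $\alpha_\infty(1)(\A/J_\infty)$, with left action given by $\alpha_\infty$. Since $J_\infty$ is $\alpha$-invariant, the endomorphism $\alpha_\infty$ is well defined by \eqref{delta_infinity}, and the identification is straightforward from the definitions of the right action, inner product and left action on quotient correspondences (formulas \eqref{right action} and \eqref{left action}) applied to the specific correspondence coming from \eqref{e-bim-a1}.

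Then I would invoke relation \eqref{J-infty}, already recorded just before the statement, which ensures that $q_{J_\infty}(J)$ is orthogonal to the kernel of $\alpha_\infty$. This is exactly the hypothesis needed to apply Corollary \ref{C-P-cross} to the reduced system $(\A/J_\infty,\alpha_\infty)$ with the ideal $q_{J_\infty}(J)$, yielding the canonical isomorphism
$$
\OO(q_{J_\infty}(J),\, X/XJ_\infty) \;\cong\; C^*\bigl(\A/J_\infty,\,\alpha_\infty,\, q_{J_\infty}(J)\bigr).
$$
Composing the two isomorphisms gives the asserted identification. The universal presentation with the generating partial isometry $u$ and the two stated relations is then immediate from Definition \ref{crossed product defn} applied to the triple $(\A/J_\infty,\alpha_\infty,q_{J_\infty}(J))$; under the isomorphism coming from Corollary \ref{C-P-cross}(i), the generator $u$ corresponds to $k_{X/XJ_\infty}(\alpha_\infty(1))^*$.

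The only non-routine step is the identification of $X/XJ_\infty$ with the correspondence of $(\A/J_\infty,\alpha_\infty)$; the rest is a concatenation of already proved facts. I expect no real obstacle, since both $J_\infty$ being $\alpha$-invariant and the explicit form \eqref{e-bim-a1} make the quotient operations compatible on the nose. Once this compatibility is in place, the universal property of the crossed product produced by the right-hand side matches the universal property inherited by $\OO(q_{J_\infty}(J), X/XJ_\infty)$ from Proposition \ref{RCP algebra} via Proposition \ref{universality proposition}.
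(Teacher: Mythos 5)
Your proposal is correct and follows exactly the route the paper takes: the paper derives this proposition "by virtue of Theorem \ref{reduction thm} and Corollary \ref{C-P-cross} along with \eqref{J-infty}", i.e. the same chain of reduction, identification of $X/XJ_\infty$ with the correspondence of $(\A/J_\infty,\alpha_\infty)$, and the orthogonality relation \eqref{J-infty}. Your additional explicit check that the quotient correspondence structure matches \eqref{e-bim-a1} is the only detail the paper leaves implicit.
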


\begin{cor}\label{kernel of a covariant representation}
If $(\pi,U, H)$ is a $J$-covariant representation of
$(\A,\alpha)$, then $ J_\infty \subset \ker \pi. $
\end{cor}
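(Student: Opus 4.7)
The plan is to prove by induction on $n$ that $J_n\subset\ker\pi$ for every $n\in\N$, and then to pass to the closure. The entire argument rests on Proposition~\ref{motivation prop1} together with the explicit recursive description of the $J_n$.

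First, recall that in the $C^*$-dynamical system setting, formula \eqref{e-j-n} gives the recursion
\begin{equation*}
J_0=\{0\},\qquad J_{n+1}=\alpha^{-1}(J_n)\cap J,\quad n\geq 0,
\end{equation*}
so in particular $J_1=\ker\alpha\cap J$. Set $I=\{b\in \A:(1-U^*U)\pi(b)=\pi(b)\}$ and $J=\{b\in \A:U^*U\pi(b)=\pi(b)\}$. By Proposition~\ref{motivation prop1} we have $I=\ker(\pi\circ\alpha)$ and $I\cap J=\ker\pi$.

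The base case $J_0=\{0\}\subset\ker\pi$ is trivial. For the inductive step, assume $J_n\subset\ker\pi$ and pick $a\in J_{n+1}$. Then $\alpha(a)\in J_n$, hence $\pi(\alpha(a))=0$, that is $a\in\ker(\pi\circ\alpha)=I$. Since $J_{n+1}\subset J$, also $a\in J$. Therefore $a\in I\cap J=\ker\pi$, which proves $J_{n+1}\subset\ker\pi$.

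Finally, because $\ker\pi$ is closed, we conclude
\begin{equation*}
J_\infty=\overline{\bigcup_{n\in\N}J_n}\subset\overline{\ker\pi}=\ker\pi,
\end{equation*}
which is the claim. There is essentially no serious obstacle here: once the recursive characterization of $J_n$ and the identification $I\cap J=\ker\pi$ from Proposition~\ref{motivation prop1} are in hand, the induction runs through in one line. An alternative, less elementary route would be to invoke the universal property of $C^*(\A,\alpha,J)$ to extend $(\pi,U,H)$ to a representation of $C^*(\A,\alpha,J)\cong\OO(J,X)$ and then apply Theorem~\ref{reduction thm} to see that $J_\infty$ maps to zero there; but this requires the standing assumption $J\cap\ker\alpha=\{0\}$ (Corollary~\ref{ort-id} guarantees this only for faithful $\pi$), whereas the direct inductive argument works for an arbitrary $J$-covariant representation.
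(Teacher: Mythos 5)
Your proof is correct, and it is genuinely more elementary than the route the paper intends. The paper states this as a corollary of Proposition~\ref{reducing C*-Hilbert bimodules}: a $J$-covariant representation $(\pi,U,H)$ corresponds (via Proposition~\ref{universality proposition} and the universal property in Proposition~\ref{RCP algebra}) to a $J$-covariant representation of the $C^*$-correspondence $X$, hence factors through $\OO(J,X)$, where $k_\A(J_\infty)=0$ by the induction carried out inside the proof of Theorem~\ref{reduction thm}; so $\pi(J_\infty)=(\pi\times_J U)(k_\A(J_\infty))=0$. You instead run essentially the same induction directly on the level of the Hilbert-space representation, replacing the step ``$k_\A(a)k_X(X)=0$ together with $J$-covariance forces $k_\A(a)=0$'' by the cleaner identity $I\cap J=\ker\pi$ from Proposition~\ref{motivation prop1}, combined with $I=\ker(\pi\circ\alpha)$ and the recursion $J_{n+1}=\alpha^{-1}(J_n)\cap J$ of formula~\eqref{e-j-n}. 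What your argument buys is self-containedness: it uses only Section~1 material and the definition of $J_n$, avoids the Cuntz-Pimsner machinery entirely, and, as you correctly observe, works for an arbitrary ideal $J$ with no orthogonality hypothesis (whereas a detour through $C^*(\A,\alpha,J)$ in the sense of Definition~\ref{crossed product defn} would require $J\cap\ker\alpha=\{0\}$). What it does not buy is the extra information contained in Theorem~\ref{reduction thm}, namely that $J_\infty$ is exactly the kernel of $k_\A$ on the universal object; but that is not needed for the corollary as stated.
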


\begin{ex}\label{reduction of Stacey's crossed product}
Let us apply the above results  to  Stacey's crossed product
(Definition~\ref{Stacey}), and in particular,  obtain a refinement
of \cite[Prop. 2.2]{Stacey}.
 Note that if   $J=A$,  then
 $$J_\infty=\{a\in \A: \lim_{n\to \infty}\alpha^n(a)=0\}=\overline{\bigcup_{n=0}^\infty \ker\al^n},$$ cf. Lemma \ref{lemma about J_infty}.
Accordingly, the system $({\A_\infty},{\alpha_\infty})$, $\A_\infty:=\A/J_\infty$,  obtained by  the quotient of $(\A,\alpha)$ by  $J_\infty$
 could  be considered as the largest subsystem of $(\A,\alpha)$ with the property that ${\alpha_\infty}:{\A_\infty}\to {\A_\infty}$
 is a monomorphism. Moreover, by
 Corollary \ref{corollary stacey's} and Proposition \ref{reducing C*-Hilbert bimodules},  Stacey's crossed product is a universal $C^*$-algebra generated by a copy of $A_\infty$ and an isometry $u$ such that  $ua u^*=\alpha_\infty(a)$, for all $a\in A_\infty$. In particular, the Stacey's crossed product is the crossed product
 $C^*(A_\infty,\alpha_\infty,A_\infty)$ studied in the present paper and it  reduces to the zero algebra if and only if $\A=\{a\in \A: \lim_{n\to \infty}\alpha^n(a)=0\}$.
\end{ex}


\section{Canonical $C^*$-dynamical systems}\label{Canonical C*-dynamical systems}
Looking at Table 1 one can not help  feeling that among the ideals
satisfying $\{0\}\subset  J \subset (\ker\alpha)^\bot$ the
ideal $J=(\ker\alpha)^\bot$ is somewhat privileged. Taking into
account  Cuntz-Pimsner algebras  and their established relation to
crossed products it may seem  completely natural as
$\OO((\ker\alpha)^\bot,X)$ should be considered as 'the smallest'
relative Cuntz-Pimsner algebra containing all the information
about the $C^*$-dynamical system $(\A,\alpha)$. Now, much in the spirit of Katsura's construction, cf. subsection \ref{Katsura's canonical relations}, yet in a slightly different way we will
show that for an arbitrary choice of $J$ the algebra
$\OO(J,X)$ coincides with Cuntz-Pimsner algebra  $\OO(
(\ker\alpha_J)^\bot, X_J)$ where $X_J$ is the
$C^*$-correspondence   of a canonically constructed
$C^*$-dynamical system $(\A_J,\alpha_J)$ which
will be presented below (see Definition~\ref{definition of the
canon}, Theorem \ref{canon}).

\subsection{Unitization of the kernel of an
endomorphism}\label{unit}

Let us fix a $C^*$-dynamical system  $(\A,\alpha)$  and   an ideal $J$  in $\A$.
Above results show us how to  reduce the investigation of crossed products to the case where $J$ is orthogonal to $\ker\al$, thereby let us assume for a while that   $\ker\alpha\cap J=\{0\}$.  The first named author  described in \cite{kwa4} a procedure of extending
$(\A,\alpha)$   up to a  $C^*$-dynamical system $(\A^+,\alpha^+)$ with a property that  the kernel of $\alpha^+$ is unital.
Moreover, the resulting system $(\A^+,\alpha^+)$ is in a sense the smallest  extension of $(\A,\alpha)$
possessing that property, see \cite{kwa4}.

Let us now slightly generalize this construction, which will be essential for  our future purposes.
 Our extension construction depends on the  choice of an ideal
orthogonal to $I:={\ker}\,\alpha$ (recall Definition~\ref{ort}). Thus let us fix a certain ideal $J$ which
satisfies
$$
\{0\} \,\subset \,J\, \subset\,  I^\bot.
$$
By    $\A_J$  we denote the direct sum of quotient algebras
$$
\A_J=\big(\A/I\big) \oplus \big(\A/J\big),
$$
and  we set $\alpha_J:\A_J\to \A_J$ by the formula
\begin{equation}
\label{d_J}
 \A_J\ni \big((a +I)\oplus (b
+J)\big)\stackrel{\alpha_J}{\longrightarrow}(\alpha(a) +I)\oplus
(\alpha(a) +J)\in \A_J.
\end{equation}
 Since $I={\rm ker}\,\alpha$ it follows that an element
$\alpha(a)$ does not depend  on the choice of   a representative
of $a +I$ and so the mapping $\alpha_J$ is well defined.

Note that, as  $I\cap J = \{0\}$,
$$
\alpha_J ([a],[b]) = (0,0)\  \Leftrightarrow  \ \alpha(a)\in I \ \text{and} \ \alpha(a)\in J \ \Leftrightarrow \ \alpha(a)=0.
$$
And therefore
\begin{equation}
\label{kerd_J} \ker \alpha_J = (0,\, \A/J ) \qquad \text{and}\qquad (\ker {\alpha_J})^\bot =(\A/I,\, 0 ).
\end{equation}
Clearly,
$\alpha_J$ is an endomorphism and its  kernel is unital with the
unit of the form $(0 + I) \oplus (1 +J)$. 
Moreover, the $C^*$-algebra $\A$ embeds  into $C^*$-algebra $\A_J$ via
\begin{equation}
\label{A_in} \A \ni a \longmapsto \big(a +I\big)\oplus \big(a +
J\big)\in \A_J.
\end{equation}
 Since $I\cap J = \{0\}$ this mapping is injective  and we will
treat $\A$ as the corresponding subalgebra of $\A_J$. Under this
identification $\alpha_J$ is an extension of $\alpha$, and in particular $\alpha_J (\A_J)=\alpha(\A)\subset \A$.
\smallskip
\begin{rem} The larger ideal  $J$ is the smaller $\A_J$ is.
Namely, since $  {\rm ker}{\alpha_J} = (0,\, \A/J ) $ 
 and
  $J$ varies from $\{0\}$ to
$I^\bot$ it follows that the kernel of $\alpha_J$  varies from $(0,\, \A )$ to
$(0,\,\A/I^{\bot})$. On the other hand, 
 the image of
$\alpha_J$  always concides with $\alpha(\A)\cong\A/I$ 
and thereby  it  does not depend  on the choice of $J$.
The case when $J=I^\bot$ was considered in \cite{kwa4}.
\end{rem}
The next statement presents a  motivation of the preceding construction.
\begin{prop}\label{motivation prop}
Let $(\pi,U,H)$ be a faithful  $J$-covariant representation of $(\A,\alpha)$. 
 Then $J$  is
orthogonal to $I=\ker\alpha$ (cf. Corollary \ref{motivation prop1}) and if $(\A_J,\alpha_J)$ is the extension of $(\A,\alpha)$ constructed above,
then  $\pi$ uniquely extends to the isomorphism $\widetilde{\pi}:\A_J\to C^*(\pi(\A),U^*U)$ such that
$(\widetilde{\pi},U,H)$ is a faithful covariant representation of $(\A_J,\alpha_J)$.
Namely $\widetilde{\pi}$ is given by
\begin{equation}\label{extension of pi to J-algebra}
\widetilde{\pi}(a + I \oplus b + J)=  U^*U \pi(a) + (1- U^*U) \pi(b), \qquad a,b \in \A.
\end{equation}
\end{prop}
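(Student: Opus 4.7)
That $J$ is orthogonal to $I=\ker\al$ is immediate from Corollary~\ref{ort-id} applied to the faithful $J$-covariant representation $(\pi,U,H)$, so the extension $(\A_J,\al_J)$ of Subsection~\ref{unit} is legitimately defined. Throughout the argument I exploit Lemma~\ref{iteration of representations}: $U^*U\in\pi(\A)'$, so that $U^*U$ and $1-U^*U$ form a pair of orthogonal projections commuting with $\pi(\A)$. My first step is to show that formula \eqref{extension of pi to J-algebra} defines a $^*$-homomorphism $\widetilde{\pi}:\A_J\to L(H)$. Well-definedness on cosets uses Proposition~\ref{motivation prop1}: for $a\in\A$, the condition $a\in I$ is equivalent to $U^*U\pi(a)=0$, while $a\in J$ is equivalent to $(1-U^*U)\pi(a)=0$. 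Multiplicativity and the $*$-property then follow at once because the two summands of \eqref{extension of pi to J-algebra} live in the orthogonal corners $U^*U\,L(H)$ and $(1-U^*U)\,L(H)$, both invariant under the commuting copy of $\pi(\A)$. Substituting $a=b$ in \eqref{extension of pi to J-algebra} shows that $\widetilde{\pi}$ extends $\pi$ along the embedding \eqref{A_in}. Injectivity of $\widetilde{\pi}$ is dual to well-definedness: orthogonality separates the two summands of a vanishing image, and the equivalences above then force $a\in I$ and $b\in J$. The image equals $C^*(\pi(\A),U^*U)$, since it contains $\pi(\A)$ (take $a=b$) and $U^*U=\widetilde{\pi}((1+I)\oplus(0+J))$.

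Next I verify that $(\widetilde{\pi},U,H)$ is a covariant representation of $(\A_J,\al_J)$. Using $UU^*U=U$ and $U(1-U^*U)=0$ together with the covariance relation for $(\pi,U)$, a short computation gives $U\widetilde{\pi}(a+I\oplus b+J)U^*=U\pi(a)U^*=\pi(\al(a))=\widetilde{\pi}(\al_J(a+I\oplus b+J))$. To determine the ideal of covariance, I compute directly: since $U^*U$ annihilates the $(1-U^*U)$-summand and commutes with $\pi(\A)$, one has $U^*U\widetilde{\pi}(a+I\oplus b+J)=U^*U\pi(a)$, and this equals $\widetilde{\pi}(a+I\oplus b+J)$ exactly when $(1-U^*U)\pi(b)=0$, i.e.\ when $b\in J$. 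That condition characterises the summand $(\A/I,0)$, which by \eqref{kerd_J} is $(\ker\al_J)^\bot$, so $(\widetilde{\pi},U,H)$ is covariant in the sense of Definition~\ref{kowariant  rep defn 2}.

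For uniqueness I observe that any $^*$-homomorphism extension $\widetilde{\pi}$ of $\pi$ to $\A_J$ is determined by its value on the central projection $e:=(0+I)\oplus(1+J)$, the unit of $\ker\al_J$; indeed every element of $\A_J$ takes the form $(1-e)\iota(a)+e\iota(b)$ for $a,b\in\A$, where $\iota$ is the embedding \eqref{A_in}, and the restriction $\widetilde{\pi}\circ\iota=\pi$ is prescribed. The covariance relation applied to $e\in\ker\al_J$ gives $U\widetilde{\pi}(e)U^*=0$; since $\widetilde{\pi}(e)$ is a projection, this factors as $(U\widetilde{\pi}(e))(U\widetilde{\pi}(e))^*=0$, so $U\widetilde{\pi}(e)=0$, and in particular $U^*U\widetilde{\pi}(e)=0$. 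On the other hand $1-e\in(\ker\al_J)^\bot$ must belong to the ideal of covariance, so $U^*U\widetilde{\pi}(1-e)=\widetilde{\pi}(1-e)$; adding the two identities forces $\widetilde{\pi}(e)=1-U^*U$, which reproduces formula \eqref{extension of pi to J-algebra}. The main technical point lies in this last step: one has to notice that $\widetilde{\pi}(e)$ is rigidly pinned down by the covariance constraints, after which the formula is no longer a choice but a necessity.
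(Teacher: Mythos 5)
Your proposal is correct and follows essentially the same route as the paper: both verify that \eqref{extension of pi to J-algebra} gives a well-defined injective $^*$-homomorphism onto $C^*(\pi(\A),U^*U)$ using the orthogonal decomposition by the central projection $U^*U\in\pi(\A)'$ together with the characterisations $a\in I\Leftrightarrow U^*U\pi(a)=0$ and $b\in J\Leftrightarrow(1-U^*U)\pi(b)=0$, and both derive uniqueness by showing covariance pins down the image of the unit of $\ker\al_J$ to $1-U^*U$. The only cosmetic difference is that you carry out by hand the verifications the paper delegates to \cite[Lem.~10.1.6]{Kadison} and Proposition~\ref{proposition najwazniejsze}.
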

\begin{Proof}
If $\widetilde{\pi}:\A_J\to C^*(\pi(\A),U^*U)$ is onto and
$(\widetilde{\pi},U,H)$ is a faithful covariant representation of
$(\A_J,\alpha_J)$, then by  Proposition  \ref{proposition najwazniejsze}   we have
$$
U^*U = \widetilde{\pi}(1 + I \oplus 0 + J).
$$
Thus $\widetilde{\pi}$ is of the form \eqref{extension of pi to
J-algebra} where $\pi=\widetilde{\pi}|_{\A}$ and plainly $(\pi,U,H)$ is a faithful  $J$-covariant representation of $(\A,\alpha)$. Conversely, let $(\pi,U,H)$ be a faithful  $J$-covariant representation of $(\A,\alpha)$.
Then by Corollary \ref{ort-id} $$
\pi( I)= [(1-U^*U)\pi(\A)] \cap \pi(\A), \qquad \pi(J) = [U^*U\pi(\A)]\cap \pi(\A),
 $$
 and consequently, cf. for instance \cite[Lem. 10.1.6]{Kadison}, we have natural isomorphisms
 $$
 \A/ I \cong   U^*U \pi(\A), \qquad   \A/ J \cong [(1-U^*U)\pi(\A)].
 $$
 Thus formula \eqref{extension of pi to
J-algebra} defines an isomorphism $\widetilde{\pi}:\A_J\to C^*(\pi(\A),U^*U)$. It is readily checked, cf. Proposition  \ref{proposition najwazniejsze},  that
$(\widetilde{\pi},U,H)$ is a covariant representation of
$(\A_J,\alpha_J)$.
\end{Proof}

\begin{thm}
Let   $X$ be the $C^*$-correspondence of a $C^*$-dynamical system
$(\A,\alpha)$ and let~$J$ be an ideal in $\A$ orthogonal to the
kernel of $\alpha$. If $X_J$ is  the $C^*$-correspondence of the  system
$({\A_J},{\alpha_J})$ constructed above, then
$$
\OO(J,X)=\OO( (\ker{\alpha_J})^\bot, X_J) = C^*(\A,\alpha, J)=C^*({\A_J},{\alpha_J},(\ker{\alpha_J})^\bot)
$$
is  a universal algebra generated by a copy of the algebra ${\A_J}$ and a partial isometry $u$ subject to relations
\begin{equation}\label{reduced realtions1}
u a u^*={\alpha_J}(a),\,\,\, a\in {\A_J}, \qquad u^*u\in {\A_J}
\end{equation}
(relations \eqref{reduced realtions1} imply that $u^*u$ belongs to the center of ${\A_J}$, cf. Proposition \ref{proposition najwazniejsze}).
\end{thm}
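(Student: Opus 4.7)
The plan is to assemble the four claimed identifications from results already available in the paper, with the bridging step being the bijection between $J$-covariant representations of $(\A,\alpha)$ and covariant representations of the extended system $(\A_J,\alpha_J)$.

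First I would dispose of the two outer equalities. The identification $\OO(J,X)\cong C^*(\A,\alpha,J)$ is immediate from Corollary \ref{C-P-cross}, which applies because $J\subset(\ker\alpha)^\bot$ by hypothesis. Similarly, since $(\ker\alpha_J)^\bot$ is trivially orthogonal to $\ker\alpha_J$, a second application of Corollary \ref{C-P-cross} to the system $(\A_J,\alpha_J)$ and its $C^*$-correspondence $X_J$ gives $\OO((\ker\alpha_J)^\bot,X_J)\cong C^*(\A_J,\alpha_J,(\ker\alpha_J)^\bot)$.

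The central step is the middle identification $C^*(\A,\alpha,J)\cong C^*(\A_J,\alpha_J,(\ker\alpha_J)^\bot)$. I would prove it by matching universal generating data. By Proposition \ref{motivation prop}, every faithful $J$-covariant representation $(\pi,U,H)$ of $(\A,\alpha)$ extends uniquely via formula \eqref{extension of pi to J-algebra} to a faithful covariant representation $(\widetilde{\pi},U,H)$ of $(\A_J,\alpha_J)$, and conversely every faithful covariant $(\widetilde{\pi},U,H)$ of $(\A_J,\alpha_J)$ restricts to a faithful $J$-covariant representation of $(\A,\alpha)$ along the embedding \eqref{A_in}. Combined with the universal properties (Definition~\ref{crossed product defn} together with Theorem~\ref{5.4}), this bijection produces mutually inverse $^*$-homomorphisms between the two crossed products: the inclusion $\A\hookrightarrow\A_J$ together with the universal partial isometry $u$ generates $C^*(\A_J,\alpha_J,(\ker\alpha_J)^\bot)$ inside $C^*(\A,\alpha,J)$ (the additional generators of $\A_J$ are expressible as $u^*u\cdot a+(1-u^*u)\cdot b$), while in the opposite direction the universal embedding $\A\subset\A_J$ provides the required copy of $\A$ inside $C^*(\A_J,\alpha_J,(\ker\alpha_J)^\bot)$.

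Finally, for the universal presentation in terms of the simplified relations, I would invoke Proposition \ref{proposition najwazniejsze}. The kernel $\ker\alpha_J$ is unital with unit $(0+I)\oplus(1+J)$, so the proposition applies and shows that for any representation of $(\A_J,\alpha_J)$ the bare condition $u^*u\in\widetilde{\pi}(\A_J)$ is equivalent to being $(\ker\alpha_J)^\bot$-covariant (and automatically places $u^*u$ in the center of $\A_J$, identifying it with the unit of $(\ker\alpha_J)^\bot$). This replaces the ideal-of-covariance clause by the single algebraic requirement $u^*u\in\A_J$ and yields the stated universal description. The main obstacle will be the careful bookkeeping in the middle step: one must verify that under the bijection coming from Proposition \ref{motivation prop} the covariance ideals on the two sides really do match, so that universality transfers faithfully and $u^*u\in\A_J$ (rather than in some multiplier completion) is precisely the right condition — this is exactly what the unitality of $\ker\alpha_J$, and hence Proposition~\ref{proposition najwazniejsze}, is designed to guarantee.
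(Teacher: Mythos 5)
Your proposal is correct and follows essentially the same route as the paper's own proof: the two outer identifications via Corollary \ref{C-P-cross}, the middle one via the bijection of faithful $J$-covariant representations of $(\A,\alpha)$ with faithful covariant representations of $(\A_J,\alpha_J)$ supplied by Proposition \ref{motivation prop}, and the reduction of the covariance clause to $u^*u\in\A_J$ via Proposition \ref{proposition najwazniejsze} using the unitality of $\ker\alpha_J$. No substantive differences to report.
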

\begin{proof}
In view of Corollary \ref{C-P-cross} we have natural identifications  $\OO(J,X)=C^*(\A,\alpha, J)$ and $\OO( (\ker{\alpha_J})^\bot, X_J) =C^*({\A_J},{\alpha_J},(\ker{\alpha_J})^\bot)$. In order to prove that $C^*(\A,\alpha, J)=C^*({\A_J},{\alpha_J},(\ker{\alpha_J})^\bot)$ it suffices to show that
we have a one-to-one correspondence between faithful  $J$-covariant representations $(\pi,U,H)$ of $(\A,\alpha)$ and faithful $(\ker{\alpha_J})^\bot$-covariant representations $(\widetilde{\pi},U,H)$ of $({\A_J},{\alpha_J})$, but this  follows from Proposition \ref{motivation prop} since by Proposition \ref{proposition najwazniejsze} a faithful covariant representation $(\widetilde{\pi},U,H)$ is $(\ker{\alpha_J})^\bot$-covariant if and only if $U^*U\in \widetilde{\pi}(\A_J)$.
\end{proof}

\subsection{Canonical $C^*$-dynamical systems}
\label{canonic-c*-dynam-syst}

Proposition \ref{reducing C*-Hilbert
bimodules} describes the natural reduction of relations to the case when $J\subset (\ker\alpha)^\bot$. This proposition along with the argument of Subsection \ref{unit} gives us a tool to achieve the goal of the present section; namely, to reduce the whole construction  to the case when $\ker\alpha$ is unital and $J=(\ker\alpha)^\bot$.

\begin{defn}\label{definition of the canon}
Let $(\A,\alpha)$ be a $C^*$-dynamical system and  $J$  an arbitrary ideal in~$\A$. Let $((\A/J_\infty)_{q^{J_\infty}(J)},(\alpha_\infty)_{q^{J_\infty}(J)})$ be the above constructed extension of the reduced $C^*$-dynamical system $(\A/J_\infty,\alpha_\infty)$  given by \eqref{ideal I_infinity}, \eqref{delta_infinity}. We will write
$$
({\A_J},{\alpha_J}):=(\A/J_\infty)_{q^{J_\infty}(J)},(\alpha_\infty)_{q^{J_\infty}(J)})
$$
and say that $({\A_J},{\alpha_J})$ is the \emph{canonical $C^*$-dynamical system} associated with $(\A,\alpha)$ and~$J$.
\end{defn}
\begin{rem}\label{A_J} By Remark~\ref{remark-inf} the above notation does not cause confusion  (in the situation when $
\{0\} \,\subset \,J\, \subset\,  I^\bot $ the pair
$({\A_J},{\alpha_J})$ coincides with the corresponding pair
introduced in \ref{unit}) and therefore we keep the notation
$({\A_J},{\alpha_J})$ in the general situation.
\end{rem}
Combining Propositions \ref{reducing C*-Hilbert bimodules}, \ref{motivation prop}, see also \cite[Cor. 1.7]{kwa4}, we get
\begin{thm}\label{canon}
Let   $X$ be the $C^*$-correspondence of  $(\A,\alpha)$ and let
$J$ be an ideal in $\A$. If $X_J$ is the  $C^*$-correspondence of
the canonical system $({\A_J},{\alpha_J})$, then
$$
\OO(J,X)=\OO( (\ker{\alpha_J})^\bot, X_J) = C^*({\A_J},{\alpha_J},(\ker{\alpha_J})^\bot)
$$
is  a universal algebra generated by a copy of the algebra ${\A_J}$ and a partial isometry $u$ subject to relations
\begin{equation}\label{reduced realtions}
u a u^*={\alpha_J}(a),\,\,\, a\in {\A_J}, \qquad u^*u\in {\A_J}.
\end{equation}
\end{thm}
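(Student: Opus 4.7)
The plan is to assemble the statement by chaining the two reduction procedures already established in the paper, then reading off the universal description from the fact that $\ker\alpha_J$ is unital by construction.

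First, I would apply Proposition \ref{reducing C*-Hilbert bimodules} to the triple $(\A,\alpha,J)$. This collapses the ``degenerate'' ideal $J_\infty$ and yields
$$\OO(J,X)=\OO(q_{J_\infty}(J),X/XJ_\infty)=C^*(\A/J_\infty,\alpha_\infty,q_{J_\infty}(J)).$$
By \eqref{J-infty}, the ideal $q_{J_\infty}(J)$ sits inside $(\ker\alpha_\infty)^\bot$, so we are now in the situation handled in Subsection~\ref{unit}.

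Second, I would invoke the unitization construction of Subsection~\ref{unit} applied to $(\A/J_\infty,\alpha_\infty)$ with the orthogonal ideal $q_{J_\infty}(J)$. By Definition~\ref{definition of the canon} and Remark~\ref{A_J} this produces exactly $(\A_J,\alpha_J)$, and the final theorem of Subsection~\ref{unit} gives the identification
$$C^*(\A/J_\infty,\alpha_\infty,q_{J_\infty}(J))=C^*(\A_J,\alpha_J,(\ker\alpha_J)^\bot).$$
Here the key input (already proved via Proposition~\ref{motivation prop}) is the one-to-one correspondence between faithful $q_{J_\infty}(J)$-covariant representations of $(\A/J_\infty,\alpha_\infty)$ and faithful $(\ker\alpha_J)^\bot$-covariant representations of $(\A_J,\alpha_J)$, obtained by extending $\pi$ via \eqref{extension of pi to J-algebra}.

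Third, to obtain the relative Cuntz--Pimsner incarnation $\OO((\ker\alpha_J)^\bot,X_J)$, I would simply apply Corollary~\ref{C-P-cross} to the pair $(\A_J,\alpha_J)$ together with the orthogonal ideal $(\ker\alpha_J)^\bot$. Finally, to read off the universal presentation \eqref{reduced realtions}, note that by construction $\ker\alpha_J$ is unital (its unit being $(0+I)\oplus(1+q_{J_\infty}(J))$ in the notation of Subsection~\ref{unit}). Hence Proposition~\ref{proposition najwazniejsze} applies and says that for a representation $(\widetilde\pi,U,H)$ of $(\A_J,\alpha_J)$ the covariance condition $U^*U$ being the unit of $\widetilde\pi((\ker\alpha_J)^\bot)$ is equivalent to the single algebraic relation $U^*U\in\widetilde\pi(\A_J)$. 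Universality then forces the defining relations in $C^*(\A_J,\alpha_J,(\ker\alpha_J)^\bot)$ to collapse to \eqref{reduced realtions}.

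Since every step is a direct appeal to an earlier result, there is no genuine obstacle left; the only point requiring care is bookkeeping, namely verifying that the two successive passages (reduction by $J_\infty$ followed by unitization) commute with the universal constructions in the expected way, and that the generator $u$ surviving all these identifications is indeed a partial isometry implementing $\alpha_J$ with $u^*u\in\A_J$. This is precisely what Propositions~\ref{reducing C*-Hilbert bimodules} and~\ref{motivation prop} guarantee.
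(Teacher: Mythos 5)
Your proposal is correct and follows essentially the same route as the paper: the authors obtain Theorem~\ref{canon} precisely by combining Proposition~\ref{reducing C*-Hilbert bimodules} (reduction by $J_\infty$) with the unitization construction of Subsection~\ref{unit} resting on Proposition~\ref{motivation prop}, and then reading off the relations \eqref{reduced realtions} via Corollary~\ref{C-P-cross} and Proposition~\ref{proposition najwazniejsze}. No gaps.
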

\begin{rem}\label{6.7}
The usefulness of canonical $C^*$-dynamical system
$(\A_J,\alpha_J)$ manifests in reducing
relations   \eqref{covariance rel1*},
\eqref{covariance rel3}, that apart from endomorphism involve an
ideal and which may degenerate, to the nondegenerated natural
relations \eqref{reduced realtions}. In fact, one could go even
further and use the construction  from \cite{kwa4} to  extend, the
canonical system $(\A_J,\alpha_J)$ up to a $C^*$-dynamical system
$(\B,\tdelta)$ possessing  a complete transfer operator (cf.
subsection \ref{ABL=Kwa-Leb}). Then $\B$ corresponds to the
fixed point subalgebra of $\OO(J,X)$ for the gauge action $\gamma$
(Proposition \ref{RCP algebra}), and by \cite[Prop. 1.9]{kwa3} the
$C^*$-correspondence $\X$ of the $C^*$-dynamical system
$(\B,\tdelta)$ is actually a Hilbert bimodule (in the sense of
Remark~\ref{Hilbbert-bim}). Thus $\OO(J,X)$ can be modeled not
only by the crossed product of $(\B,\tdelta)$, N.3 in Table
\ref{table 1}, cf. Proposition \ref{crossed-ABL1}, but also by
 by the $C^*$-correspondence $\X$, N.4 in
Table \ref{table 2}. Hence the results of \cite{Ant-Bakht-Leb},
\cite{aee} or isomorphism theorem \cite{kwa-ck}
applied to  $(\B,\tdelta)$ can be exploited in the study of
$\OO(J,X)$ in terms of 'Fourier' coefficients.
\end{rem}

We end up by noting that Katsura's 'canonical relations' for
$C^*$-correspondences (see Definitions~\ref{Katsura's
relations},~\ref{Katsura's relations-general}) when applied to the
$C^*$-correspondence $X$   of $(\A,\alpha)$ also leads to a
certain dynamical system, which however in general is slightly
smaller than $(\A_J,\alpha_J)$ and is less natural in our context.
Indeed, by passing if necessary to the reduced objects, we need to
consider only the case when $J\subset (\ker\al)^\bot$, and then
$$
\A_\omega=\{(a,q_J(a'))\in A\oplus A/J: q_{(\ker\al)^\bot}(a)=q_{(\ker\al)^\bot}(a')\}.
$$
In particular, the mapping
$$
\A_\omega\ni (a,q_J(a'))\stackrel{\alpha_\omega}{\longmapsto} (\al(a),q_{J}(\al(a))) \in \A_\omega
$$
yields a well defined endomorphism $\alpha_\omega:\A_\omega\to \A_\omega$, and one sees that $X_\omega$ coincides with the $C^*$-correspondence of the $C^*$-dynamical systems $(\A_\omega,\alpha_\omega)$.
Thus we have three $C^*$-dynamical systems $(\A,\alpha)$,
$(\A_\omega,\alpha_\omega)$, $(\A_J,\alpha_J)$, and  each of them is
an extension of the proceeding one. Indeed,  we have natural
homomorphisms
$$
\A \ni a \stackrel{\iota_1}{\longmapsto} a\oplus q_J(a) \in \A_\omega, \qquad \A_\omega \ni (a,q_J(a')) \stackrel{\iota_2}{\longmapsto}  q_{\ker\al}(a)\oplus q_J(a') \in \A_J.
$$
Clearly, $\iota_1$ is injective  and to see that $\iota_2$ is injective note that $\iota_2(a,q_J(a'))=0$ means that $a\in\ker\alpha$ and $a'\in J\subset (\ker\al)^\bot $, and then  relation $q_{(\ker\al)^\bot}(a)=q_{(\ker\al)^\bot}(a')=0$ imply that $a=0$. The  monomorphisms $\iota_1, \iota_2 $ make the following diagram commute
$$
\begin{xy}
\xymatrix{\A \ar[d]_{\alpha} \ar[r]^{\iota_1} & \A_\omega\ar[d]_{\alpha_\omega} \ar[r]^{\iota_2} &  \A_J \ar[d]^{\al_J}
       \\
    \A \ar[r]_{\iota_1} & \A_\omega  \ar[r]_{\iota_2 }&  \A_J
              }
  \end{xy}.
$$
Moreover, $\iota_1$ is an isomorphism iff $J=(\ker\al)^\bot$ and $\iota_2$ is    an isomorphism iff $(\ker\al)^\bot$ is unital. In particular, in 'Katsura's picture',
 the crossed product $C^*(\A,\alpha, J)$ could be considered as a universal $C^*$-algebra subject to relations
 $$
ua u^*=\alpha_\omega(a),\quad   a \in \A_\omega,\qquad
\{a\in \A_\omega: u^*u a=a\}=(\ker\al_\omega)^\bot,
$$
which apparently are more complicated than relations \eqref{reduced realtions}.

\textbf{B. K.  Kwa\'sniewski}, 
 \noindent \textsc{Institute of Mathematics,  University  of Bialystok},
\textsc{ ul. Akademicka 2, PL-15-267  Bialystok, Poland }\\
 \emph{e-mail:} \texttt{bartoszk@math.uwb.edu.pl}, \\
 \emph{www:} \texttt{http://math.uwb.edu.pl/$\sim$zaf/kwasniewski}
 \bigskip
 
 \noindent \textbf{A. V. Lebedev},  
\noindent \textsc{Department of Mechanics and Mathematics,
Belarus State University, 
pr. Nezavisimosti, 4, 220050, Minsk, Belarus}, $\&$ \\
\textsc{Institute of Mathematics,  University  of Bialystok},
\textsc{ ul. Akademicka 2, PL-15-267  Bialystok, Poland},
 \emph{e-mail:} \texttt{lebedev@bsu.by}
 \end{document}